\documentclass[11pt]{article}

\usepackage[margin=2.6cm]{geometry}
\usepackage{amsmath,amssymb,amsthm,mathtools}
\usepackage[round,authoryear]{natbib}
\usepackage[colorlinks,citecolor=blue,linkcolor=blue,urlcolor=blue]{hyperref}
\usepackage{graphicx,flafter,placeins,booktabs,array,enumitem,microtype}
\usepackage[utf8]{inputenc}
\usepackage[T1]{fontenc}
\usepackage[osf,sc]{mathpazo}

\newtheorem{theorem}{Theorem}[section]
\newtheorem{proposition}[theorem]{Proposition}
\newtheorem{lemma}[theorem]{Lemma}

\theoremstyle{definition}
\newtheorem{definition}[theorem]{Definition}
\newtheorem{remark}[theorem]{Remark}

\newcommand{\E}{\mathbb E}
\newcommand{\R}{\mathbb R}
\newcommand{\one}{\mathbf 1}
\newcommand{\Sdk}{\mathcal S_{d,k}}

\newcommand{\norm}[1]{\left\lVert #1\right\rVert}
\newcommand{\inner}[2]{\left\langle #1,#2\right\rangle}
\newcommand{\Var}{\operatorname{Var}}

\renewcommand{\P}{\mathbb P}

\title{Nonparametric Regression on Fixed-Cardinality Subsets:\\
Minimax Risk and Random-Design Effects}
\author{%
  G\'erard Biau\\[4pt]
  \small Sorbonne Universit\'e, Institut universitaire de France\\[2pt]
  \small \texttt{gerard.biau@sorbonne-universite.fr}
}
\date{}

\begin{document}
\maketitle

\begin{abstract}
\noindent
We study regression with subsets as covariates.  The response is an unknown function of the input subset, and observations consist of
noisy evaluations at uniformly sampled subsets, each containing
exactly \(k\) items from a ground set of size \(d\).  This problem
arises in combination screening, bundle preference modeling, and
other settings in which outcomes depend on collections of prescribed
size. The fixed-cardinality constraint couples the membership
coordinates, so standard product-domain notions of interaction and
smoothness cannot be imported unchanged.  We use the harmonic decomposition of the
Johnson graph to define an intrinsic Johnson--Sobolev scale and determine the
exact dimensions of low-order interaction spaces. We derive finite-sample risk bounds for harmonic projection and
establish a nonasymptotic minimax characterization when the Johnson--Sobolev energy budget is controlled relative to the noise variance. We also show that stable least
squares removes the signal-dependent fluctuation of empirical
projection when the empirical Gram matrix is sufficiently well conditioned. For arbitrary signal-to-noise ratios, a rank-deficiency analysis
quantifies the components left unidentified by the random design. Together, the rank-deficiency analysis and a centered completion
estimator yield minimax bounds that match up to constants whenever
\(\min(k,d-k)\) and the smoothness order are fixed. Numerical experiments reported in the Supplementary Material illustrate estimation under different
interaction profiles and distinguish the effects of observation noise, random-design fluctuation, and
incomplete coverage.

\smallskip
\noindent\textbf{Keywords.}
Combinatorial regression; fixed-cardinality subsets; harmonic
analysis; Johnson graph; minimax estimation; random design; Sobolev
ellipsoid.
\end{abstract}

\section{Introduction}\label{sec:introduction}

Suppose that a pharmacologist wants to estimate the expected efficacy
of every three-compound combination drawn from a library of twenty
candidates.  The unknown response function maps each combination to
its expected efficacy.  Since there are
\(\binom{20}{3}=1140\) possible combinations, an experiment typically
provides noisy evaluations for only a fraction of them.  The
statistical goal is to learn the response function over the entire
collection of combinations, including those that were not observed.
This is a regression problem in which the input is a set rather than
a vector of unconstrained covariates.

Regression problems of this kind arise in biological combination
screens, preference studies for bundles, portfolio evaluation,
configuration testing, and experimental designs involving selected
groups of items.  In each case, the response may depend not only on
the individual items but also on their interactions within the
observed set.  In this paper, every input is a subset containing exactly \(k\) items
selected from a ground set of \(d\) available items.  The covariate
domain is therefore the collection of all such \(k\)-subsets, and the
statistical objective is to estimate the response associated with
each of them.  We assume that the observed subsets are sampled
uniformly at random, which provides a symmetric benchmark when no
subset is favored in advance.  Nonuniform and adaptive sampling
designs are beyond the scope of the present work.

Let \(d\geqslant2\) and \(1\leqslant k\leqslant d-1\), and define
\[
  \mathcal S_{d,k}=\{S\subseteq[d]:|S|=k\},
  \qquad
  [d]=\{1,\ldots,d\},
  \qquad
  N=|\mathcal S_{d,k}|=\binom dk.
\]
We observe independent pairs \((S_j,Y_j)\) satisfying
\[
  S_j\sim\nu_{d,k},
  \qquad
  Y_j=f^\star(S_j)+\varepsilon_j,
  \qquad j=1,\ldots,n,
\]
where \(\nu_{d,k}\) is the uniform distribution on
\(\mathcal S_{d,k}\). The noise variables are i.i.d., centered, independent of the design, and sub-Gaussian with
variance proxy \(\sigma^2>0\).  We evaluate an estimator \(\widehat f\) using the global squared
\(L^2(\nu_{d,k})\)-loss
\[
  \|\widehat f-f^\star\|_2^2
  =
  \frac1N\sum_{S\in\mathcal S_{d,k}}
  \{\widehat f(S)-f^\star(S)\}^2,
\]
and define its risk as the expectation of this loss with respect to
both the random design and the noise. Because sampling is performed with replacement, many subsets
may remain unobserved, even when \(n\) is comparable to \(N\).
Thus, without a structural assumption linking the response values
across subsets, the function cannot be recovered on unobserved
inputs.

\paragraph{Johnson geometry and interaction structure.}
To make such a structural assumption useful, we first need to specify
when two subsets should be regarded as close.  Let
\(S\in\mathcal S_{d,k}\) denote a generic input and represent it by
its membership indicators
\[
  X_i(S)=\mathbf 1_{\{i\in S\}},
  \qquad i=1,\ldots,d.
\]
These coordinates satisfy
\(\sum_{i=1}^d X_i(S)=k\) and are therefore dependent under the
uniform distribution on \(\mathcal S_{d,k}\).  More precisely, for
\(i\neq i'\),
\[
  \operatorname{Cov}\{X_i(S),X_{i'}(S)\}
  =
  -\frac{k(d-k)}{d^2(d-1)}.
\]
Thus, the covariate domain is not a product space, and the usual
coordinatewise perturbations of the Boolean cube do not remain
admissible: flipping a single membership indicator adds or removes an
item and produces a subset whose cardinality is no longer \(k\).

The smallest modification that remains inside
\(\mathcal S_{d,k}\) consists in replacing one selected item by one
unselected item.  The natural mathematical object for recording these
cardinality-preserving moves is the Johnson graph \(J(d,k)\)
\citep{BrouwerCohenNeumaier1989}.  Its vertices are the subsets in
\(\mathcal S_{d,k}\), and two vertices are connected when one can be
obtained from the other by a single exchange.  The corresponding
graph distance is
\[
  \operatorname{dist}_J(S,S')
  =
  k-|S\cap S'|,
  \qquad S,S'\in\mathcal S_{d,k},
\]
the number of exchanges needed to transform \(S\) into \(S'\).
The graph therefore
provides both a notion of neighboring inputs and the analytical tools
used below to describe regularity and interactions on the
fixed-cardinality domain.

The Johnson graph is used here to analyze the response function, not
as an object to be estimated.  Its associated random walk moves from
one \(k\)-subset to another by choosing a selected item and exchanging
it with an unselected one.  The spectral decomposition of this walk
induces an orthogonal decomposition of every function on
\(\mathcal S_{d,k}\) into components of increasing interaction order
\citep{Delsarte1973,BannaiIto1984,Filmus2016}.  We refer to these
components as the Johnson harmonic levels.  Level zero contains the
constant functions; level one contains centered additive item effects;
level two, when present, contains pairwise effects that cannot be represented by constant or additive terms; and higher levels describe genuinely
higher-order interactions.

As developed formally in Section~\ref{sec:harmonic-analysis}, the
decomposition has levels \(0,\ldots,m\), where
\[
  m=\min(k,d-k).
\]
The upper limit \(m\) reflects complementation: describing a
\(k\)-subset through its selected items is equivalent to describing
it through the \(d-k\) items that are excluded.  Our analysis concerns the nonconstant part of the response.
We therefore separate the intercept and work with functions centered
under the uniform design, i.e.,
\(
  \E_{S\sim\nu_{d,k}}[f(S)]=0.
\)
This is a normalization rather than a substantive restriction, since
an unknown constant component can be estimated separately at the
parametric rate. Accordingly, for a cutoff
\(D\in\{0,\ldots,m\}\), the space containing all nonconstant
interaction levels up to order \(D\) has
\[
  p_D=\binom dD-1
\]
independent degrees of freedom.  The subtraction of one accounts for
the removed one-dimensional space of constant functions.  Hence,
\(p_D\) is the number of coefficients to be estimated when
interactions up to order \(D\) are included, with \(p_0=0\).

The same random walk defines a normalized Johnson Laplacian, which
measures how strongly a function varies across neighboring subsets.
On harmonic level \(r\in\{0,\ldots,m\}\), this operator acts by
multiplication by
\[
  \gamma_{r,d}=\frac{r(d-r+1)}d.
\]
Here, \(\gamma_{0,d}=0\) corresponds to the constant functions, while
the positive eigenvalues
\(\gamma_{1,d},\ldots,\gamma_{m,d}\) increase with the interaction
order. The zero eigenvalue means that the Johnson Laplacian does not
distinguish functions that differ only by a constant.  This is why the
smoothness constraint is imposed on the centered response.
Section~\ref{sec:statistical-theory} uses the positive weights
\(\gamma_{r,d}^{\,s}\) to define a centered fractional
Johnson--Sobolev class of smoothness order \(s>0\) and energy budget \(B>0\).
This class is the parameter space for our minimax analysis.

\paragraph{Minimax risk and random-design uncertainty.}
Truncating the harmonic decomposition at level \(D\) creates a
familiar statistical tradeoff.  Increasing \(D\) reduces the
approximation error by retaining more interactions, but it also
increases the dimension \(p_D\) of the fitted model.  For
\(0\leqslant D<m\), the squared error contributed by the discarded
levels is bounded by
\[
  b_D(s,B)
  =
  \frac{B}{\gamma_{D+1,d}^{\,s}},
\]
because \(D+1\) is the first omitted level.  At the full cutoff
\(D=m\), no level is omitted, and we set \(b_m(s,B)=0\).  Estimating the \(p_D\) retained coefficients from \(n\) observations
has the usual noise-dependent cost \(\sigma^2p_D/n\), where
\(\sigma^2\) is the sub-Gaussian variance proxy.  Our first main result
shows that this approximation--estimation tradeoff determines the
minimax risk over the Johnson--Sobolev class when the ratio \(B/\sigma^2\) is uniformly bounded. 
More precisely, for every
fixed \(C_{\mathrm{snr}}>0\), uniformly over all admissible
\(d,k,n,s,B,\sigma^2\) satisfying
\[
  \frac{B}{\sigma^2}\leqslant C_{\mathrm{snr}},
\]
the minimax risk is comparable, up to constants depending only on
\(C_{\mathrm{snr}}\), to
\[
  \min_{0\leqslant D\leqslant m}
  \left\{
    \frac{\sigma^2p_D}{n}
    +
    b_D(s,B)
  \right\}.
\]
Thus, the optimal cutoff balances the noise incurred by estimating
the retained interaction levels against the approximation error
created by omitting the remaining ones.  An equivalent max--min
formulation, used to establish the minimax lower bound, is given in
Section~\ref{sec:statistical-theory}.

The condition \(B/\sigma^2\leqslant C_{\mathrm{snr}}\) deserves further
explanation.  It is not imposed by the definition of the
Johnson--Sobolev class, nor is it required for the minimax lower bound.
It enters through the risk analysis of empirical projection.  This
estimator replaces each population projection coefficient by an
average over the sampled subsets and is therefore affected by two
sources of randomness.  Observation noise perturbs the recorded
responses, while the random design perturbs the empirical averages
themselves.  Thus, even when the responses are observed without noise,
the estimated coefficients continue to fluctuate with the sampled
subsets.  The resulting variance depends on the size of the response
function and produces the additional signal-dependent term
\(Bp_D/n\).  The condition
\(B/\sigma^2\leqslant C_{\mathrm{snr}}\) ensures that this term is
bounded by a constant multiple of the ordinary noise contribution
\(\sigma^2p_D/n\).

Least squares can remove the part of this signal-dependent fluctuation
generated by the component already contained in the fitted space.
Let \(W_D\) denote the space containing the nonconstant interaction
levels up to order \(D\).  If the true response belongs to \(W_D\),
noiseless least squares recovers it exactly whenever its values at the
sampled subsets uniquely determine a function in \(W_D\), or
equivalently whenever the empirical Gram matrix is invertible. With observation noise, 
invertibility still guarantees a unique
least-squares solution, but it does not control its accuracy: small
eigenvalues of the empirical Gram matrix can strongly amplify the
noise.  Stable estimation therefore requires this matrix to be well
conditioned.  The leverage score at \(S\) measures the
largest possible value of \(g(S)^2\) among functions \(g\in W_D\)
with \(\|g\|_2=1\).  We prove that it equals \(p_D\) at every subset
\(S\).  This uniform bound controls the contribution of each sampled
subset to the empirical Gram matrix, and matrix concentration then
shows that this matrix is well conditioned with high probability once
\(n\) is of order \(p_D\log(2p_D)\).  In this regime, the leading
variance is \(\sigma^2p_D/n\), and the signal-dependent projection
variance disappears.

This stability result also has a limitation.  Constant leverage
controls the contribution of each individual observation to the
empirical Gram matrix, but it does not guarantee that the realized
evaluation matrix has full rank.  This distinction becomes important
as \(D\) increases and the fitted space approaches the full function
space.  Because sampling is performed with replacement, some subsets
may remain unobserved, making the empirical Gram matrix rank-deficient
and leaving some components of the response unidentified even without
noise.  We quantify this information loss by the normalized expected
rank deficiency
\[
  \rho_{D,n}
  =
  \frac{1}{p_D}
  \E\!\left[
    p_D-\operatorname{rank}(\mathbf X_D)
  \right],
\]
where \(\mathbf X_D\) is the evaluation matrix on \(W_D\).  Thus,
\(\rho_{D,n}\) is the expected proportion of the \(p_D\) dimensions
of \(W_D\) that remain unidentified in the noiseless experiment.

At the full level, this description of the unidentified components suggests replacing Gram inversion by direct completion of the response
table.  Repeated observations are averaged at each sampled subset, and
the unobserved responses are assigned a common value chosen to enforce
the zero-mean constraint.  The resulting centered completion estimator
controls the error caused by incomplete coverage while also averaging
the observation noise.  Combining its risk bound with that of harmonic projection yields lower
and upper minimax bounds without any restriction on \(B/\sigma^2\). 
These bounds match up to a factor proportional to
\(\gamma_{m,d}^{\,s}\). Since \(\gamma_{m,d}\leqslant m\), this
factor remains constant when the maximal interaction order
\(m=\min(k,d-k)\) and the smoothness order \(s\) are fixed.  In
particular, when \(k\) is fixed, the minimax risk is characterized up
to constants depending only on \(k\) and \(s\), uniformly over
\(d\), \(n\), \(B\), and \(\sigma^2\).  When \(m\) is allowed to grow,
the remaining factor identifies the gap between our lower and upper
bounds.

\paragraph{Contributions and scope.}
Taken together, our results turn classical Johnson harmonic analysis
into a nonparametric framework for regression with subsets as
covariates.  The framework links smoothness under local exchanges to
interaction spaces of explicit dimension and yields risk bounds that
separate harmonic approximation, observation noise, and incomplete
design coverage.  As a constructive complement,
Section~\ref{ssec:canonical-frame} of the Supplementary Material derives an explicit unit-norm
tight frame at each harmonic level.  This representation is not needed
for the statistical guarantees, which depend only on the harmonic
subspaces.

To the best of our knowledge, this is the first nonasymptotic minimax
analysis of random-design regression on fixed-cardinality subsets
under a smoothness condition controlling variation between subsets
that differ by a single exchange.  We are not aware of prior work
that combines the exact interaction dimensions of this domain, a
fractional Sobolev scale generated by the Johnson Laplacian, noisy
random sampling, and risk bounds that account explicitly for
incomplete coverage.

The remainder of the paper proceeds from the harmonic structure to
the statistical results.  Section~\ref{sec:related-work} reviews the
closest literature.  Section~\ref{sec:harmonic-analysis} formalizes
the statistical model and the Johnson harmonic decomposition.
Section~\ref{sec:statistical-theory} introduces the
smoothness classes and develops projection estimation and the first
minimax bounds.  Section~\ref{sec:random-design} then studies stable
least squares, Gram concentration, rank deficiency, and the
coverage-sensitive minimax characterization. Section~\ref{sec:discussion} concludes.  The Supplementary Material
reports the numerical experiments and contains the canonical frame
construction and all proofs.

\section{Related work}\label{sec:related-work}

\paragraph{Learning functions on subsets.}
The closest learning-theoretic literature studies functions defined
on the full power set \(2^{[d]}\), whose standard harmonic
representation is the Fourier--Walsh expansion
\citep{ODonnell2014}.  Under sparsity assumptions on this
representation, \citet{StobbeKrause2012} establish recovery guarantees from randomly sampled function values.  Other
work analyzes the learnability of valuation classes from values or
pairwise comparisons, or imposes submodularity through explicit or
neural parameterizations
\citep{BalcanConstantinIwataWang2012,BalcanVitercikWhite2016,
DolhanskyBilmes2016,DeChakrabarti2022}.  These approaches differ from
the present setting in both assumptions and objective.  We observe
only one fixed-cardinality layer, impose neither sparsity nor a shape
constraint such as monotonicity or submodularity, and study global
\(L^2\)-risk over a smoothness class rather than sparse identification
or multiplicative approximation.

Permutation-invariant neural architectures provide general
representations for functions of sets
\citep{ZaheerEtAl2017,WagstaffEtAl2019}.  They can be applied here
after encoding the identities of the items, but their principal
results concern representation and approximation rather than
smoothness-based statistical guarantees under noisy random sampling.
Likewise, Gaussian-process models for choice functions learn the
possibly set-valued choice made from a presented collection 
\citep{BenavoliAzzimontiPiga2023}; this differs from estimating a
scalar response attached to every subset.

\paragraph{Regression and signal recovery on graphs.}
Laplacian regularity is widely used to estimate functions observed on
graph vertices.  \citet{KirichenkoVanZanten2017} develop Bayesian
Laplacian regularization on growing graphs, and
\citet{KirichenkoVanZanten2018} establish corresponding minimax lower
bounds.  For neighborhood graphs approximating an underlying
Euclidean domain, Laplacian-based estimators can recover continuum
Sobolev rates
\citep{GreenBalakrishnanTibshirani2021,
ShiBalasubramanianPolonik2024}.  The Johnson graph plays a different role here: its vertex set is exactly the finite covariate domain, and both its spectral weights and their multiplicities vary explicitly with \((d,k)\).

Graph signal processing also considers the recovery of bandlimited
signals from randomly sampled vertices.
\citet{PuyTremblayGribonvalVandergheynst2018} derive stable-sampling
conditions governed by the dimension of the bandlimited space and an
appropriate graph-coherence parameter; under sampling adapted to the local graph coherence, the resulting
scale is of order \(p\log p\) for a
\(p\)-dimensional space.  Our target functions need not be
bandlimited.  Johnson--Sobolev regularity instead controls the \(L^2\)-mass
outside each low-order approximation space, and our risk analysis
accounts jointly for approximation error, observation noise, and
directions left unresolved by the random design.

\paragraph{Random least squares.}
The stability of least-squares approximation from random function
evaluations is governed by the supremum of the diagonal of the
projection kernel, often called the Christoffel or leverage function
\citep{CohenDavenportLeviatan2013}; see also the correction
\citep{CohenDavenportLeviatan2019}.  Matrix Chernoff inequalities
provide the corresponding concentration tool \citep{Tropp2012}.  We
specialize this general mechanism to the Johnson interaction spaces.
Permutation invariance of these spaces, together with transitivity of
the action on the Johnson domain, forces the projection-kernel
diagonal to be constant and equal to the dimension, so uniform sampling is already
leverage-optimal.  We then incorporate this stability property into a
smoothness-based risk analysis and separate observation noise from
variation caused by the random design.

\paragraph{Noisy combinatorial optimization.}
Noisy evaluations of subsets also arise in subset selection,
Bayesian optimization, and combinatorial bandits
\citep{QianEtAl2017,OhTomczakGavvesWelling2019,LiangWanDong2024,TajdiniJainJamieson2024}.  These methods use
sequentially chosen evaluations to locate or approximate an optimal
subset.  Their performance is consequently measured through
optimization accuracy, regret, or query complexity.  Our design is
instead i.i.d.\ and nonadaptive, and the objective is to reconstruct
the response function over the entire fixed-cardinality domain.
Optimization guarantees therefore do not directly control the global
prediction risk considered here.

\paragraph{Harmonic analysis of subset data.}
The Johnson association scheme, its eigenspaces, and its zonal
functions are classical
\citep{Delsarte1973,BannaiIto1984}.  Statistical spectral analysis
based on representations of the symmetric group was developed by
\citet{Diaconis1988,Diaconis1989}, including the analysis of data
indexed by combinatorial objects, while
\citet{DiaconisRockmore1993} study the computation of the associated
isotypic projections.  More recently, \citet{Filmus2016} constructed
an explicit orthogonal basis for functions on a fixed-cardinality
slice, and \citet{IglesiasNatale2022} developed a fast full-table
Fourier transform on the Johnson graph.

These results provide the harmonic foundation used throughout the
paper.  Section~\ref{ssec:canonical-frame} of the Supplementary Material gives an explicit unit-norm
normalization of the canonical zonal frame and the resulting
permutation-equivariant representation of the harmonic projections.

\section{Model and Johnson harmonic geometry}
\label{sec:harmonic-analysis}

\subsection{Statistical experiment and risk}

Throughout the paper, \(d\geqslant2\),
\(1\leqslant k\leqslant d-1\), and \(n\geqslant2\).  Set
\[
  [d]=\{1,\ldots,d\},
  \qquad
  \Sdk=\{S\subseteq[d]:|S|=k\},
  \qquad
  N=|\Sdk|=\binom dk.
\]
The excluded cases \(k=0\) and \(k=d\) are degenerate: the domain
then contains a single subset, and no cardinality-preserving exchange
is possible.  We equip \(\Sdk\) with the uniform probability measure
\(\nu_{d,k}\).  For real-valued functions \(f,g\) on \(\Sdk\), write
\[
  \inner{f}{g}
  =
  \E_{S\sim\nu_{d,k}}[f(S)g(S)],
  \qquad
  \norm{f}_2^2=\inner{f}{f}.
\]
Thus,
\[
  \norm{f-g}_2^2
  =
  \frac1N
  \sum_{S\in\Sdk}\{f(S)-g(S)\}^2.
\]

The design variables \(S_1,\ldots,S_n\) are i.i.d.\ with law
\(\nu_{d,k}\), and the responses satisfy
\[
  Y_j=f^\star(S_j)+\varepsilon_j,
  \qquad j=1,\ldots,n.
\]
The noise variables are i.i.d.\ with a common law \(P\) and are
independent of the design.  For \(\sigma^2>0\), let
\(\mathcal P_{\sigma^2}\) denote the class of probability laws \(P\)
on \(\R\) such that, for \(\varepsilon\sim P\),
\(
  \E_P[\varepsilon]=0
\)
and
\[
  \E_P[\exp(t\varepsilon)]
  \leqslant
  \exp\left(\frac{\sigma^2t^2}{2}\right),
  \qquad t\in\R.
\]
In particular, every \(P\in\mathcal P_{\sigma^2}\) satisfies
\(\E_P[\varepsilon^2]\leqslant\sigma^2\).

For a function \(f:\Sdk\to\R\), regarded as a possible value of the
unknown response \(f^\star\), and a noise law
\(P\in\mathcal P_{\sigma^2}\), denote by \(\P_{f,P}\) the joint law
of \((S_j,Y_j)_{j=1}^n\) and by \(\E_{f,P}\) the corresponding
expectation.  Expectations involving only a uniform subset of
\(\Sdk\) are written explicitly as
\(\E_{S\sim\nu_{d,k}}\).  For a parameter class \(\mathcal A\), define
the minimax risk
\[
  R_n^\star(\mathcal A)
  =
  \inf_{\widehat f}
  \sup_{f\in\mathcal A}
  \sup_{P\in\mathcal P_{\sigma^2}}
  \E_{f,P}\!\left[
    \norm{\widehat f-f}_2^2
  \right],
\]
where the infimum ranges over all measurable estimators, based on
\((S_j,Y_j)_{j=1}^n\), taking values in the space of real-valued
functions on \(\Sdk\).  Upper bounds will hold uniformly over
\(\mathcal P_{\sigma^2}\), whereas lower bounds may be established by
restricting to the Gaussian submodel \(N(0,\sigma^2)\). All results below are nonasymptotic and uniform in \(d\), \(k\), and
\(n\).  They therefore also apply to sequences of experiments in which
\(d=d_n\) and \(k=k_n\) vary with the sample size.

\subsection{Inclusion filtration and harmonic levels}

For \(T\subseteq[d]\), define the inclusion indicator
\[
  h_T(S)=\one_{\{T\subseteq S\}},
  \qquad S\in\Sdk.
\]
Thus, \(h_T(S)\) records whether all items in \(T\) are present in
the input subset \(S\).  Set \(m=\min(k,d-k)\), and, for
\(0\leqslant r\leqslant m\), define
\[
  U_r
  =
  \operatorname{span}\{h_T:T\subseteq[d],\ |T|=r\}.
\]
The space \(U_r\) therefore contains the functions representable by
inclusion effects of order \(r\), together with all lower-order
effects.  To see the latter point, let \(1\leqslant r\leqslant m\)
and \(|T|=r-1\).  For every \(S\in\Sdk\),
\[
  h_T(S)
  =
  \frac1{k-r+1}
  \sum_{i\in[d]\setminus T}h_{T\cup\{i\}}(S).
\]
Indeed, if \(T\not\subseteq S\), both sides vanish.  If
\(T\subseteq S\), exactly \(k-r+1\) elements
\(i\in S\setminus T\) make \(T\cup\{i\}\) a subset of \(S\).
Consequently,
\[
  U_0\subseteq U_1\subseteq\cdots\subseteq U_m.
\]

The dimensions of these spaces follow from the rank of the inclusion
matrix.  Let \(M_{r,k}\) be the matrix whose rows are indexed by the
\(r\)-subsets \(T\), whose columns are indexed by the \(k\)-subsets
\(S\), and whose entries are
\[
  (M_{r,k})_{T,S}=h_T(S)=\one_{\{T\subseteq S\}}.
\]
Each row is the vector of values of \(h_T\) over the whole domain
\(\Sdk\).  Hence, the row space of \(M_{r,k}\) is naturally
identified with \(U_r\), and
\[
  \dim(U_r)=\operatorname{rank}(M_{r,k}).
\]
Gottlieb's rank theorem states that \(M_{r,k}\) has full row rank
whenever \(r\leqslant k\) and \(r+k\leqslant d\)
\citep{Gottlieb1966}.  Both conditions hold for
\(0\leqslant r\leqslant m\), and therefore
\[
  \dim(U_r)=\binom dr,
  \qquad 0\leqslant r\leqslant m.
\]
At the final level,
\[
  \dim(U_m)=\binom dm=\binom dk=N,
\]
where the second equality follows from
\(m\in\{k,d-k\}\).  Since a function on \(\Sdk\) is determined by
its \(N\) values, the full space \(L^2(\Sdk,\nu_{d,k})\) also has
dimension \(N\).  Hence,
\[
  U_m=L^2(\Sdk,\nu_{d,k}).
\]

To isolate the new component appearing at each order, set
\(U_{-1}=\{0\}\) and define
\[
  V_r=U_r\cap U_{r-1}^{\perp},
  \qquad 0\leqslant r\leqslant m.
\]
Because the inclusion spaces are nested,
\[
  U_r=U_{r-1}\mathbin{\oplus^\perp}V_r.
\]
Iterating this identity and using \(U_m=L^2(\Sdk,\nu_{d,k})\) gives
\begin{equation}\label{eq:harmonic-decomposition}
  L^2(\Sdk,\nu_{d,k})
  =
  V_0\mathbin{\oplus^\perp}\cdots
  \mathbin{\oplus^\perp}V_m,
  \qquad
  \dim(V_r)
  =
  \binom dr-\binom d{r-1},
\end{equation}
with the convention \(\binom d{-1}=0\).  Let \(P_r\) denote the orthogonal projector onto \(V_r\).  Since
\(V_r=U_r\cap U_{r-1}^{\perp}\), the function \(P_rf\) is the part of
\(f\) that lies in \(U_r\) and is orthogonal to all lower-order
inclusion effects.  We call \(P_rf\) the order-\(r\) harmonic
component of \(f\).  As an orthogonal projection, it is uniquely
determined by \(f\) and does not depend on any choice of coefficients
in a representation using the indicators \(h_T\).

We next connect this decomposition with the one-exchange geometry of the Johnson graph.  For \(S\in\Sdk\), \(a\in S\), and
\(b\notin S\), write
\[
  S^{a\to b}
  =
  (S\setminus\{a\})\cup\{b\}.
\]
Let \(S_0\) denote the current subset.  Conditional on \(S_0=S\), choose \(A\) uniformly from the \(k\)
elements of \(S\) and, independently, choose \(B\) uniformly from the
\(d-k\) elements of \([d]\setminus S\).  Set \(S_1=S^{A\to B}\). The Markov operator of this one-exchange walk is
therefore
\[
  Kf(S)
  :=
  \E\!\left[f(S_1)\mid S_0=S\right]
  =
  \frac1{k(d-k)}
  \sum_{a\in S}\sum_{b\notin S}f(S^{a\to b}).
\]
Equivalently, \(K\) is the normalized adjacency operator of the
Johnson graph \(J(d,k)\)
\citep{BrouwerCohenNeumaier1989}.

For \(S,S'\in\Sdk\), let
\[
    q(S,S')
  =
  \mathbb P(S_1=S'\mid S_0=S)
  =
  \frac{1}{k(d-k)}
  \one_{\{|S\cap S'|=k-1\}}.
\]
Thus, \(q(S,S')\) is \(1/\{k(d-k)\}\) when \(S'\) is obtained from
\(S\) by one exchange and is zero otherwise.  The Markov operator can
equivalently be written as
\[
  Kf(S)
  =
  \sum_{S'\in\Sdk}q(S,S')f(S').
\]
The uniform measure \(\nu_{d,k}\) is reversible for this walk.  Indeed,
every exchange \(S\to S'\) has the equally likely reverse exchange
\(S'\to S\), so \(q(S,S')=q(S',S)\).  Since \(\nu_{d,k}\) is
uniform, it follows that
\[
  \nu_{d,k}(S)q(S,S')
  =
  \nu_{d,k}(S')q(S',S),
  \qquad S,S'\in\Sdk.
\]
This detailed-balance identity is precisely the reversibility
condition.  It implies that \(K\) is self-adjoint with respect to
\(\inner{\cdot}{\cdot}\):
\[
  \inner{f}{Kg}
  =
  \inner{Kf}{g},
  \qquad
  f,g\in L^2(\Sdk,\nu_{d,k}).
\]

The classical spectrum of the Johnson graph shows that the harmonic
spaces in \eqref{eq:harmonic-decomposition} diagonalize this operator
\citep{Delsarte1973,BannaiIto1984,BrouwerCohenNeumaier1989}.  The
precise correspondence, together with the associated Dirichlet form identity, is recorded in the following proposition.

\begin{proposition}[Johnson spectrum]\label{prop:johnson-spectrum}
For \(0\leqslant r\leqslant m\) and \(f\in V_r\),
\begin{equation}\label{eq:johnson-eigenvalue}
  Kf=\theta_{r,d,k}f,
  \qquad
  \theta_{r,d,k}
  =
  1-\frac{r(d-r+1)}{k(d-k)}.
\end{equation}
Moreover, every \(f\in L^2(\Sdk,\nu_{d,k})\) satisfies
\begin{equation}\label{eq:johnson-dirichlet-form}
  \inner{f}{(I-K)f}
  =
  \frac1{2k(d-k)}
  \E_{S\sim\nu_{d,k}}
  \left[
    \sum_{a\in S}\sum_{b\notin S}
    \{f(S)-f(S^{a\to b})\}^2
  \right].
\end{equation}
\end{proposition}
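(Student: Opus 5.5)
The Dirichlet identity \eqref{eq:johnson-dirichlet-form} is the routine part, so we take it first. By detailed balance \(q(S,S')=q(S',S)\) and uniformity of \(\nu_{d,k}\), the standard reversible-chain computation applies:
\[
\inner{f}{(I-K)f}=\tfrac12\sum_{S,S'}\nu_{d,k}(S)q(S,S')\{f(S)-f(S')\}^2.
\]
Indeed, expanding the square and using \(\sum_{S'}q(S,S')=1\) together with symmetry gives \(\E[f^2]-\inner{f}{Kf}\). Substituting \(q(S,S')=\one_{\{|S\cap S'|=k-1\}}/\{k(d-k)\}\), and noting that each neighbour \(S'\) of \(S\) equals \(S^{a\to b}\) for exactly one pair \((a,b)\), gives \eqref{eq:johnson-dirichlet-form}.

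For the eigenvalue statement we avoid explicit bases and use the filtration \(U_0\subseteq\cdots\subseteq U_m\). The first step is to show that each \(U_r\) is \(K\)-invariant, by computing \(Kh_T\) for \(|T|=r\). Fix \(S\) and write \(j=|T\cap S|\). An exchange \(a\to b\) produces \(T\subseteq S^{a\to b}\) only if \(j\in\{r-1,r\}\).
\begin{itemize}[leftmargin=*]
\item If \(j=r\), we need \(a\notin T\), giving \(k-r\) choices of \(a\) and \(d-k\) choices of \(b\).
\item If \(j=r-1\), the missing element \(t\in T\setminus S\) must be \(b\), and \(a\in S\setminus T\), giving \(k-r+1\) choices.
\end{itemize}
Hence
\[
k(d-k)\,Kh_T=(k-r)(d-k)\,h_T+\sum_{t\in T}\sum_{a\notin T}\one_{\{T\setminus\{t\}\cup\{a\}\subseteq S,\ t\notin S\}}.
\]
The second sum is rewritten via \(\one_{\{t\notin S\}}=1-\one_{\{t\in S\}}\) as a combination of \(h_{T'}\) with \(|T'|=r\) plus \(h_{T\setminus\{t\}}\)-type terms. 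It is cleaner to organise this as
\[
\sum_{t\in T}\sum_{a\notin T} h_{T\setminus\{t\}\cup\{a\}}(S)\,\one_{\{t\notin S\}} .
\]
Applying the lifting identity \(h_{T\setminus\{t\}}=\frac1{k-r+1}\sum_i h_{(T\setminus\{t\})\cup\{i\}}\) then shows \(Kh_T\in U_r\). Since \(K\) is self-adjoint and preserves \(U_{r-1}\) and \(U_r\), it also preserves \(V_r=U_r\cap U_{r-1}^\perp\).

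The second step identifies the scalar. The symmetric group \(\mathfrak S_d\) acts on \(\Sdk\) and commutes with \(K\). Since \(V_r\) is the irreducible Specht module \(S^{(d-r,r)}\), Schur's lemma forces \(K|_{V_r}\) to be a scalar. To avoid invoking representation theory, a direct computation suffices. Writing \(K h_T = \alpha_r h_T + (\text{combination of }h_{T'},\ T'\ne T,\ |T'|=r) + (\text{elements of }U_{r-1})\) and projecting onto \(V_r\) gives \(KP_rh_T=\alpha_r P_rh_T\) plus cross terms. The cleanest route is to compute \(\inner{h_{T'}}{\cdot}\) against a specific vector of \(V_r\), such as Filmus's basis element \(\prod_{i=1}^r(\one_{a_i\in S}-\one_{b_i\in S})\) with distinct \(a_i,b_i\). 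This vector lies in \(U_r\), is orthogonal to \(U_{r-1}\), and spans \(V_r\) under the group action. We would then verify directly that it is an eigenfunction. For an exchange \(a\to b\), each factor either stays fixed or changes. Averaging over \(a\in S\) and \(b\notin S\), each factor \(x_i-y_i\) (with \(x_i=\one_{a_i\in S}\), \(y_i=\one_{b_i\in S}\)) is nonzero only when exactly one of \(a_i,b_i\) lies in \(S\). A nonzero factor is flipped to zero when \(a\) or \(b\) hits its pair, and otherwise the product is preserved.

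This bookkeeping is the main obstacle. We expect it to yield
\[
k(d-k)K f=\bigl[k(d-k)-r(d-r+1)\bigr]f,
\]
where the \(-r(d-r+1)\) collects \(r(d-k)+r(k)\) for the moves touching a pair, corrected by the \(r(r-1)\) moves that swap between two pairs and send the product to its negative. Dividing by \(k(d-k)\) gives \(\theta_{r,d,k}\) in \eqref{eq:johnson-eigenvalue}.

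Finally, the product vectors span \(V_r\): their \(\mathfrak S_d\)-orbit spans an invariant subspace of \(V_r\), and \(V_r\) is irreducible (alternatively, a dimension count following Filmus 2016). So the eigenvalue holds on all of \(V_r\), with \(\gamma_{r,d}\) emerging as \(k(d-k)(1-\theta_{r,d,k})/d\) up to normalisation.
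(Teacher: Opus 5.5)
Your Dirichlet-form argument is correct and is essentially the paper's. You expand the square, use detailed balance (equivalently, stationarity of $\nu_{d,k}$) for the second quadratic term, and note that each neighbour $S'$ of $S$ equals $S^{a\to b}$ for exactly one pair $(a,b)$.

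The eigenvalue part is not finished as written. The decisive step is a computation you only expect to work, and your description of its mechanism is wrong. Take $f(S)=\prod_{i=1}^r(\one_{\{a_i\in S\}}-\one_{\{b_i\in S\}})$ at a point with $f(S)\neq0$. The $r(r-1)$ exchanges between two different pairs send the product to $0$, not to $-f(S)$; the sign change comes from the $r$ exchanges within a single pair. The correct count is $k(d-k)Kf(S)=\{(k-r)(d-k-r)-r\}f(S)=\{k(d-k)-r(d-r+1)\}f(S)$.

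You also do not treat points with $f(S)=0$, where $Kf(S)=0$ needs its own cancellation argument:
\begin{itemize}
\item a single defective pair can be repaired in two ways with opposite signs;
\item one doubly-included pair together with one doubly-excluded pair can be repaired in four ways whose signs sum to zero;
\item with more defects, no single exchange helps.
\end{itemize}
Finally, passing from one eigenvector to all of $V_r$ needs justification you do not supply. Either you need irreducibility of $V_r$, which requires identifying $V_r$ with $S^{(d-r,r)}$ (for example via multiplicity-freeness of the permutation module and Gottlieb's theorem), or you need Filmus's basis theorem.

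More importantly, none of this is needed. Your first step already contains the whole proof; the idea you missed is triangularity of $K$ with respect to the filtration $U_0\subseteq\cdots\subseteq U_m$. In your expression for $k(d-k)Kh_T$, sum over $a$ first. On the event $\{T\setminus\{t\}\subseteq S,\ t\notin S\}$, the admissible $a$ are exactly the $k-r+1$ elements of $S\setminus T$. The indicator of this event is $h_{T\setminus\{t\}}-h_T$. So no order-$r$ indicator other than $h_T$ survives, and
\[
  (K-\theta_{r,d,k}I)h_T=\frac{k-r+1}{k(d-k)}\sum_{t\in T}h_{T\setminus\{t\}}\in U_{r-1},
\]
that is, $(K-\theta_{r,d,k}I)U_r\subseteq U_{r-1}$. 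You already know $U_{r-1}^\perp$ is $K$-invariant by self-adjointness. Hence every $f\in V_r$ satisfies $(K-\theta_{r,d,k}I)f\in U_{r-1}\cap U_{r-1}^\perp=\{0\}$.

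This is exactly the paper's argument. It uses no representation theory, no explicit eigenvectors and no spanning statement. Your route, via Schur's lemma or a spanning family of product eigenvectors, is heavier and imports precisely the external results the paper avoids.
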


Since \(r\mapsto r(d-r+1)\) is strictly increasing on
\(\{0,\ldots,m\}\), the eigenvalues
\(\theta_{r,d,k}\) of \(K\) are strictly decreasing with \(r\),
whereas the eigenvalues
\[
  1-\theta_{r,d,k}
  =
  \frac{r(d-r+1)}{k(d-k)}
\]
of \(I-K\) are strictly increasing. Together
with the complete decomposition
\eqref{eq:harmonic-decomposition}, the proposition therefore shows
that \(V_r\) is precisely the eigenspace of \(K\) associated with
\(\theta_{r,d,k}\).  Equation~\eqref{eq:johnson-dirichlet-form} gives this spectral ordering
a local interpretation: the quadratic form
\(\inner{f}{(I-K)f}\) is one half of the mean squared change in \(f\)
under a uniformly chosen exchange.  Among functions of unit \(L^2\)-norm, higher harmonic levels
correspond to larger eigenvalues of \(I-K\) and hence to greater mean
squared variation across neighboring subsets.

The harmonic projections also admit a canonical subset-indexed tight
frame that respects permutations of the ground set.  This basis-free,
permutation-equivariant representation may be of independent
interest.  Since it is not needed for the statistical risk analysis,
its explicit construction and proof are deferred to
Section~\ref{ssec:canonical-frame} of the Supplementary Material.

\section{Smoothness, projection, and minimax risk}
\label{sec:statistical-theory}

\subsection{The intrinsic Johnson--Sobolev scale}

Proposition~\ref{prop:johnson-spectrum} shows that the quadratic form
associated with \(I-K\) measures the variation of a function under a
single admissible exchange.  Thus, \(I-K\) is the normalized random-walk Laplacian of the Johnson
graph.  We rescale it
by defining
\begin{equation}\label{eq:johnson-laplacian}
  L_J
  =
  \frac{k(d-k)}d(I-K).
\end{equation}
The normalization is chosen so that the smallest nonzero eigenvalue
of \(L_J\) is equal to one.  The scaling factor \(k(d-k)/d\) is unchanged when \(k\) is replaced
by \(d-k\), consistently with the symmetry between a subset and its
complement.

By Proposition~\ref{prop:johnson-spectrum}, \(L_J\) acts on \(V_r\)
as multiplication by
\[
  \gamma_{r,d}
  =
  \frac{r(d-r+1)}d,
  \qquad 0\leqslant r\leqslant m.
\]
In particular,
\[
  \gamma_{0,d}=0,
  \qquad
  \gamma_{1,d}=1.
\]
Moreover,
\[
  \gamma_{r+1,d}-\gamma_{r,d}
  =
  \frac{d-2r}{d}>0,
  \qquad 0\leqslant r<m,
\]
because \(m\leqslant d/2\).  Hence, the eigenvalues increase strictly
with the harmonic order.

Since \(L_J\) acts on \(V_r\) by multiplication by
\(\gamma_{r,d}\), the harmonic decomposition of \(f\) gives
\[
  L_Jf
  =
  \sum_{r=0}^m\gamma_{r,d}P_rf
  =
  \sum_{r=1}^m\gamma_{r,d}P_rf,
\]
where the second equality uses \(\gamma_{0,d}=0\).
More generally, the spectral functional calculus defines, for
\(s>0\),
\[
  L_J^sf
  =
  \sum_{r=1}^m
  \gamma_{r,d}^{\,s}P_rf.
\]
Equivalently, \(L_J^s\) vanishes on \(V_0\) and acts by multiplication
by \(\gamma_{r,d}^{\,s}\) on \(V_r\), for
\(1\leqslant r\leqslant m\).

\begin{definition}[Johnson--Sobolev energy]
\label{def:johnson-sobolev}
For \(s>0\) and
\(f\in L^2(\Sdk,\nu_{d,k})\), define
\begin{equation}\label{eq:johnson-sobolev}
  I_{J,d,k}^{(2,s)}(f)
  :=
  \inner{f}{L_J^sf}.
\end{equation}
Equivalently,
\[
  I_{J,d,k}^{(2,s)}(f)
  =
  \sum_{r=1}^m
  \gamma_{r,d}^{\,s}\norm{P_rf}_2^2.
\]
\end{definition}

The superscript \(2\) indicates that the energy is quadratic in the
\(L^2(\nu_{d,k})\) norm, while \(s\) is the spectral smoothness order.
Since \(\gamma_{1,d}=1\), the first nonconstant harmonic level has
unit weight for every \(s>0\).  At \(s=1\), the energy has the local
exchange representation
\[
  I_{J,d,k}^{(2,1)}(f)
  =
  \frac1{2d}
  \E_{S\sim\nu_{d,k}}
  \left[
    \sum_{a\in S}\sum_{b\notin S}
    \{f(S)-f(S^{a\to b})\}^2
  \right].
\]
Indeed, this identity follows by combining
\eqref{eq:johnson-laplacian} with the Dirichlet representation in
Proposition~\ref{prop:johnson-spectrum}.  Thus, at smoothness order \(s=1\),
the Johnson--Sobolev energy measures the aggregate squared variation of
\(f\) across admissible exchanges, averaged over the initial subset
and normalized by \(1/(2d)\).

The entire scale is invariant under complementation.  More precisely,
if
\[
  f^{\mathrm c}(A)
  =
  f([d]\setminus A),
  \qquad
  A\in\mathcal S_{d,d-k},
\]
then complementation is a measure-preserving bijection between
\(\mathcal S_{d,k}\) and \(\mathcal S_{d,d-k}\) that maps admissible
exchanges bijectively and hence intertwines the two Johnson walks. Since the scaling factor \(k(d-k)/d\) is
unchanged, the same relation holds for the corresponding operators
\(L_J\) and their spectral powers.  Thus,
\[
  I_{J,d,d-k}^{(2,s)}(f^{\mathrm c})
  =
  I_{J,d,k}^{(2,s)}(f).
\]

Because the constant component \(P_0f\) does not appear in
\eqref{eq:johnson-sobolev}, the energy is a squared seminorm rather
than a squared norm.  Its square root is unchanged when a constant is
added to \(f\), and it vanishes exactly when \(f\) is constant.

Finally, for \(0\leqslant D<m\), the monotonicity of the spectral
weights gives
\[
\begin{aligned}
  I_{J,d,k}^{(2,s)}(f)
  &\geqslant
  \sum_{r>D}
  \gamma_{r,d}^{\,s}\norm{P_rf}_2^2
\\
  &\geqslant
  \gamma_{D+1,d}^{\,s}
  \sum_{r>D}\norm{P_rf}_2^2.
\end{aligned}
\]
Therefore,
\begin{equation}\label{eq:harmonic-tail}
  \sum_{r>D}\norm{P_rf}_2^2
  \leqslant
  \frac{I_{J,d,k}^{(2,s)}(f)}
       {\gamma_{D+1,d}^{\,s}}.
\end{equation}
This is the approximation inequality used in the statistical
analysis below.

\subsection{Projection estimation}

For \(s,B>0\), consider the centered ellipsoid
\begin{equation}\label{eq:johnson-ellipsoid}
  \mathcal E^s_{d,k}(B)
  =
  \left\{
    f\in L^2(\Sdk,\nu_{d,k}):
    P_0f=0,\ 
    I_{J,d,k}^{(2,s)}(f)\leqslant B
  \right\}.
\end{equation}
Because \(\gamma_{r,d}\geqslant1\) for \(r\geqslant1\),
\begin{equation}\label{eq:ellipsoid-l2-bound}
  \norm{f}_2^2\leqslant B,
  \qquad f\in\mathcal E^s_{d,k}(B).
\end{equation}
Since \(V_0=\operatorname{span}\{\one\}\), the constant component of
any function \(f\) is
\[
  P_0f
  =
  \E_{S\sim\nu_{d,k}}[f(S)]\,\one.
\] 
Centering therefore isolates the nonconstant component controlled by
the energy.  More precisely, suppose that the response can be written as
\[
  f=c+g,
  \qquad
  c=\E_{S\sim\nu_{d,k}}[f(S)],
  \qquad
  g\in\mathcal E^s_{d,k}(B).
\]
Since \(g\) and the noise are centered,
\(\overline Y=n^{-1}\sum_{j=1}^nY_j\) is unbiased for \(c\), and
\[
  \E_{f,P}[(\overline Y-c)^2]
  =\frac{\norm{g}_2^2+\Var_P(\varepsilon)}n
  \leqslant\frac{B+\sigma^2}{n}.
\]
Thus an unknown intercept can be handled at the parametric rate, for
example on an independent sample split. 
No intercept is included in
the minimax class \eqref{eq:johnson-ellipsoid}.

For \(0\leqslant D\leqslant m\), let
\[
  W_D=V_1\mathbin{\oplus^\perp}\cdots
      \mathbin{\oplus^\perp}V_D,
  \qquad
  P_{\leqslant D}^{\circ}=P_1+\cdots+P_D,
\]
with \(W_0=\{0\}\) and \(P_{\leqslant0}^{\circ}=0\).  The constant level \(V_0\) is omitted because the response class
\(\mathcal E^s_{d,k}(B)\) is centered. The dimension of \(W_D\) is
\[
  p_D:=\dim(W_D)=\sum_{r=1}^D\dim(V_r)=\binom dD-1,
\]
including \(p_0=0\).

Choose any orthonormal basis
\(e_1,\ldots,e_{p_D}\) of \(W_D\).  The coefficient of the
population projection \(P_{\leqslant D}^{\circ}f\) in direction
\(e_\ell\) is
\[
  \theta_\ell
  =
  \inner{f}{e_\ell}
  =
  \E_{S\sim\nu_{d,k}}[f(S)e_\ell(S)].
\]
Since the noise is centered and independent of the design,
\[
  \E_{f,P}[Y_je_\ell(S_j)]
  =
  \theta_\ell.
\]
The sample average
\[
  \widehat\theta_\ell
  =
  \frac1n\sum_{j=1}^nY_je_\ell(S_j)
\]
is therefore an unbiased estimator of the population projection
coefficient \(\theta_\ell\).  This leads to the empirical projection
estimator
\begin{equation}\label{eq:projection-estimator}
  \widehat f_D^{\,\mathrm{proj}}
  =
  \sum_{\ell=1}^{p_D}
  \widehat\theta_\ell e_\ell.
\end{equation}

When \(D=0\), the sum in
\eqref{eq:projection-estimator} is empty, and hence
\(\widehat f_0^{\,\mathrm{proj}}=0\). Although this definition uses an orthonormal basis, the resulting
estimator depends only on \(W_D\).  Indeed, substituting the
definition of \(\widehat\theta_\ell\) gives
\[
\begin{aligned}
  \widehat f_D^{\,\mathrm{proj}}(S)
  &=
  \sum_{\ell=1}^{p_D}
  \left\{
    \frac1n\sum_{j=1}^n
    Y_je_\ell(S_j)
  \right\}e_\ell(S)
\\
  &=
  \frac1n\sum_{j=1}^n
  Y_j\mathcal K_D^\circ(S_j,S),
\end{aligned}
\]
where
\[
  \mathcal K_D^\circ(S,S')
  =
  \sum_{\ell=1}^{p_D}
  e_\ell(S)e_\ell(S')
\]
is the orthogonal projection kernel of \(W_D\).  This kernel is
basis-independent.  More precisely, if
\(\widetilde e_1,\ldots,\widetilde e_{p_D}\) is another orthonormal
basis, then the two bases are related by an orthogonal matrix \(O\).
Writing
\[
  e(S)
  =
  \bigl(e_1(S),\ldots,e_{p_D}(S)\bigr)^\top,
\]
we have \(\widetilde e(S)=Oe(S)\), and hence
\[
  \widetilde e(S)^\top\widetilde e(S')
  =
  e(S)^\top O^\top Oe(S')
  =
  e(S)^\top e(S').
\]
Thus both the kernel and the estimator are determined by the space
\(W_D\), rather than by the basis used to represent it.

\begin{remark}
Section~\ref{ssec:canonical-frame} of the Supplementary Material gives an alternative
permutation-equivariant representation of the same estimator in terms
of a canonical tight frame indexed by subsets.

The full-table Johnson transform of
\citet{IglesiasNatale2022} computes harmonic projections from an
\(N\)-dimensional array indexed by all vertices of \(\Sdk\).  The empirical coefficients can be obtained by applying this transform
to the \(N\)-vector whose entry at \(S\) is
\(
  \frac{N}{n}\sum_{j:S_j=S}Y_j,
\)
with value zero when \(S\) is unobserved. 
Applying the full-table transform to this vector still requires computation on
the entire domain and therefore scales with \(N=\binom dk\), not with
the sample size \(n\) or the retained dimension \(p_D\).
Developing a sample-based low-order transform is a separate
computational question.
\end{remark}

Define the worst-case approximation bound
\[
  b_D(s,B)
  =
  \begin{cases}
    \displaystyle
    \frac{B}{\gamma_{D+1,d}^{\,s}},
      &0\leqslant D<m,\\[1.2ex]
    0,&D=m.
  \end{cases}
\]

\begin{proposition}[Projection risk]\label{prop:projection-risk}
For every \(s,B>0\) and \(0\leqslant D\leqslant m\),
\begin{equation}\label{eq:projection-risk}
  \sup_{f\in\mathcal E^s_{d,k}(B)}
  \sup_{P\in\mathcal P_{\sigma^2}}
  \E_{f,P}\!\left[
    \norm{\widehat f_D^{\,\mathrm{proj}}-f}_2^2
  \right]
  \leqslant
  \left(\sigma^2+B\right)\frac{p_D}{n}
  +
  b_D(s,B).
\end{equation}
\end{proposition}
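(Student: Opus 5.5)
The argument is a bias--variance decomposition in the orthonormal basis \(e_1,\ldots,e_{p_D}\) of \(W_D\). Because \(\widehat f_D^{\,\mathrm{proj}}\in W_D\) and \(f-P_{\leqslant D}^{\circ}f\perp W_D\), Pythagoras gives
\[
  \norm{\widehat f_D^{\,\mathrm{proj}}-f}_2^2
  =
  \sum_{\ell=1}^{p_D}(\widehat\theta_\ell-\theta_\ell)^2
  +
  \norm{f-P_{\leqslant D}^{\circ}f}_2^2 .
\]
Since \(P_0f=0\) for \(f\in\mathcal E^s_{d,k}(B)\), the second term equals \(\sum_{r>D}\norm{P_rf}_2^2\). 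For \(D<m\), the harmonic tail inequality \eqref{eq:harmonic-tail} bounds it by \(B/\gamma_{D+1,d}^{\,s}=b_D(s,B)\). For \(D=m\), it vanishes by the complete decomposition \eqref{eq:harmonic-decomposition}. The case \(D=0\) is also covered, since both \(p_0\) and \(\widehat f_0^{\,\mathrm{proj}}\) vanish.

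For the stochastic term, I use the unbiasedness \(\E_{f,P}[\widehat\theta_\ell]=\theta_\ell\) and the fact that the pairs \((S_j,\varepsilon_j)\) are i.i.d. This gives
\[
  \E_{f,P}(\widehat\theta_\ell-\theta_\ell)^2
  =\frac1n\Var\{Y_1e_\ell(S_1)\}
  \leqslant\frac1n\E\bigl[Y_1^2e_\ell(S_1)^2\bigr].
\]
Expanding \(Y_1^2=f(S_1)^2+2f(S_1)\varepsilon_1+\varepsilon_1^2\), the cross term has zero mean because the noise is centered and independent of the design. The noise term contributes at most \(\sigma^2\E[e_\ell(S_1)^2]=\sigma^2\), using \(\E_P[\varepsilon^2]\leqslant\sigma^2\). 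Summing over \(\ell\) therefore gives
\[
  \E\sum_\ell(\widehat\theta_\ell-\theta_\ell)^2
  \leqslant
  \frac1n\E_{S\sim\nu_{d,k}}\bigl[\{f(S)^2+\sigma^2\}\,\mathcal K_D^\circ(S,S)\bigr].
\]

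The key step, and the only one that is not routine, is to show that the diagonal of the projection kernel is constant: \(\mathcal K_D^\circ(S,S)=p_D\) for every \(S\). The plan rests on three observations.
\begin{itemize}
\item Each \(U_r\) is spanned by the \(h_T\) with \(|T|=r\), so it is invariant under the action of the symmetric group on \([d]\).
\item This action preserves \(\nu_{d,k}\), so it acts by orthogonal operators. Hence each \(V_r=U_r\cap U_{r-1}^\perp\), and therefore \(W_D\), is invariant as well.
\item By the basis-independence argument given above, the projection kernel satisfies \(\mathcal K_D^\circ(\pi S,\pi S')=\mathcal K_D^\circ(S,S')\), because \(\{e_\ell\circ\pi^{-1}\}\) is another orthonormal basis of \(W_D\).
\end{itemize}
Transitivity of the action on \(\Sdk\) then makes \(S\mapsto\mathcal K_D^\circ(S,S)\) constant. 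Its average is \(\sum_\ell\norm{e_\ell}_2^2=p_D\), so the constant is \(p_D\).

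With this identity, the stochastic term is at most \((\norm{f}_2^2+\sigma^2)p_D/n\). The bound \eqref{eq:ellipsoid-l2-bound} gives \(\norm{f}_2^2\leqslant B\), so the term is at most \((B+\sigma^2)p_D/n\). Adding the bias bound and taking suprema over \(f\in\mathcal E^s_{d,k}(B)\) and \(P\in\mathcal P_{\sigma^2}\) yields \eqref{eq:projection-risk}.
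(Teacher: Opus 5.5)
Your proposal is correct and follows essentially the same route as the paper. Both arguments use Pythagoras to split the loss into estimation and approximation parts, bound the approximation part by the harmonic tail inequality, and control the coefficient variance via unbiasedness and a second-moment bound, using the constant kernel diagonal \(\mathcal K_D^\circ(S,S)=p_D\) obtained from permutation invariance of \(W_D\) and transitivity of the action on \(\Sdk\). The only cosmetic difference is that you expand \(Y_1^2\) before summing over \(\ell\), so the noise part needs only the average of the diagonal, whereas the paper factors out \(p_D\) first and then bounds \(\E_{f,P}[Y^2]\leqslant B+\sigma^2\).
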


The two terms on the right-hand side of
\eqref{eq:projection-risk} bound distinct contributions:
\((\sigma^2+B)p_D/n\) bounds the coefficient-estimation term, whereas
\(b_D(s,B)\) bounds the approximation term contributed by the omitted
harmonic levels. The key fact in the variance calculation is that
the projection kernel has constant diagonal:
\begin{equation}\label{eq:constant-kernel-diagonal}
  \mathcal K_D^\circ(S,S)
  =
  p_D,
  \qquad S\in\Sdk.
\end{equation}
This identity follows from the permutation symmetry of \(W_D\);
its proof is given together with that of
Proposition~\ref{prop:projection-risk}.  It yields
\[
  \E_{f,P}\!\left[
    \norm{
      \widehat f_D^{\,\mathrm{proj}}
      -P_{\leqslant D}^{\circ}f
    }_2^2
  \right]
  \leqslant
  (\sigma^2+B)\frac{p_D}{n}.
\]
The estimation component
\(
  \widehat f_D^{\,\mathrm{proj}}
  -
  P_{\leqslant D}^{\circ}f
\)
belongs to \(W_D\), whereas the approximation component
\(
  f-P_{\leqslant D}^{\circ}f
\)
belongs to \(W_D^\perp\). Their
squared norms therefore add exactly, and
\eqref{eq:harmonic-tail} bounds the latter by \(b_D(s,B)\).

The contribution \(Bp_D/n\) is not observation noise.  It arises
because the population projection coefficients are estimated by
averages over randomly sampled subsets, which fluctuate with the
design even when the responses are noiseless.

\subsection{Nonasymptotic minimax risk}

For \(1\leqslant D\leqslant m\), define
\[
  a_D(s,B)
  =
  \frac{B}{\gamma_{D,d}^{\,s}},
  \qquad
  v_D(n,\sigma^2)
  =
  \frac{\sigma^2p_D}{n}.
\]
The quantity \(a_D(s,B)\) is the squared radius of an
\(L^2\)-ball centered at zero in \(W_D\) and contained in
\(\mathcal E^s_{d,k}(B)\). Indeed, for every \(g\in W_D\),
\[
\begin{aligned}
  I_{J,d,k}^{(2,s)}(g)
  &=
  \sum_{r=1}^D
  \gamma_{r,d}^{\,s}\norm{P_rg}_2^2
\\
  &\leqslant
  \gamma_{D,d}^{\,s}
  \sum_{r=1}^D\norm{P_rg}_2^2
  =
  \gamma_{D,d}^{\,s}\norm{g}_2^2.
\end{aligned}
\]
Consequently,
\[
  \{g\in W_D:\norm{g}_2^2\leqslant a_D(s,B)\}
  \subseteq
  \mathcal E^s_{d,k}(B).
\]
The second quantity, \(v_D(n,\sigma^2)\), is the usual parametric risk
scale for estimating \(p_D\) coefficients from \(n\) observations
under Gaussian noise with variance \(\sigma^2\).  A standard finite-dimensional testing argument compares the squared
radius \(a_D(s,B)\) with the \(p_D\)-dimensional noise cost
\(v_D(n,\sigma^2)\) and leads to the scale
\[
  \min\{a_D(s,B),v_D(n,\sigma^2)\}.
\]
We therefore define the spectral minimax scale by
\begin{equation}\label{eq:spectral-minimax-scale}
  \mathfrak R_{n,d,k}(s,B,\sigma^2)
  =
  \max_{1\leqslant D\leqslant m}
  \min\left\{
    a_D(s,B),
    v_D(n,\sigma^2)
  \right\}.
\end{equation}

With the convention \(v_0=0\), a deterministic crossing argument,
given in the proof of Theorem~\ref{thm:johnson-minimax}, yields
\begin{align*}
  \mathfrak R_{n,d,k}(s,B,\sigma^2)
  &\leqslant
  \min_{0\leqslant D\leqslant m}
  \left\{
    v_D(n,\sigma^2)+b_D(s,B)
  \right\}\\
  &\leqslant
  \min_{0\leqslant D\leqslant m}
  \left\{
    (\sigma^2+B)\frac{p_D}{n}+b_D(s,B)
  \right\}\\
  &\leqslant
  2\left(1+\frac{B}{\sigma^2}\right)
  \mathfrak R_{n,d,k}(s,B,\sigma^2).
\end{align*}
The first minimum is the ideal approximation--observation-noise
tradeoff, whereas the second is the optimized right-hand side of the
empirical-projection bound.  Thus empirical projection matches the
spectral scale with constants uniform in the parameters whenever
\(B/\sigma^2\) is bounded.

\begin{theorem}[Uniform nonasymptotic minimax bounds]
\label{thm:johnson-minimax}
There exists a universal constant \(c>0\) such that, for every
\(d\geqslant2\), \(1\leqslant k\leqslant d-1\), \(n\geqslant2\),
and \(s,B,\sigma^2>0\),
\begin{equation}\label{eq:johnson-minimax}
  c\,\mathfrak R_{n,d,k}(s,B,\sigma^2)
  \leqslant
  R_n^\star(\mathcal E^s_{d,k}(B))
  \leqslant
  2\left(1+\frac{B}{\sigma^2}\right)
  \mathfrak R_{n,d,k}(s,B,\sigma^2).
\end{equation}
Consequently, for every fixed \(C_{\mathrm{snr}}>0\), the condition
\[
  \frac{B}{\sigma^2}\leqslant C_{\mathrm{snr}}
\]
implies
\[
  c\,\mathfrak R_{n,d,k}(s,B,\sigma^2)
  \leqslant
  R_n^\star(\mathcal E^s_{d,k}(B))
  \leqslant
  2(1+C_{\mathrm{snr}})
  \mathfrak R_{n,d,k}(s,B,\sigma^2).
\]
This comparison is nonasymptotic and holds uniformly over all
\(d,k,n,s,B,\sigma^2\) satisfying the displayed condition.
\end{theorem}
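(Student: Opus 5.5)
The argument splits into an upper bound, obtained from Proposition~\ref{prop:projection-risk} together with the deterministic crossing inequalities displayed before the theorem, and a lower bound, obtained by a Gaussian-noise, finite-dimensional testing argument inside a ball of \(W_D\).

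\emph{Upper bound.} Taking the best cutoff in Proposition~\ref{prop:projection-risk} gives
\[
R_n^\star\leqslant\min_{0\leqslant D\leqslant m}\bigl\{(\sigma^2+B)p_D/n+b_D(s,B)\bigr\}.
\]
It remains to prove the crossing inequality. The increment \(v_D\) is nondecreasing in \(D\), with \(v_0=0\). The term \(b_D\) is nonincreasing, with \(b_m=0\). Also \(b_D=a_{D+1}\) for \(D<m\).

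Let \(D^\ast\) be the largest \(D\in\{0,\ldots,m\}\) such that \(v_D\leqslant a_D\), with \(v_0\leqslant a_0:=+\infty\) so that \(D^\ast\) exists. We then bound each term.
\begin{itemize}
\item If \(D^\ast\geqslant1\), then \(v_{D^\ast}=\min\{a_{D^\ast},v_{D^\ast}\}\leqslant\mathfrak R\).
\item If \(D^\ast<m\), then \(v_{D^\ast+1}>a_{D^\ast+1}\). Hence \(b_{D^\ast}=a_{D^\ast+1}=\min\{a_{D^\ast+1},v_{D^\ast+1}\}\leqslant\mathfrak R\).
\end{itemize}
So \(v_{D^\ast}+b_{D^\ast}\leqslant2\mathfrak R\). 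Multiplying \(v\) by \((1+B/\sigma^2)\) gives the factor \(2(1+B/\sigma^2)\). The first inequality of the chain, \(\mathfrak R\leqslant\min_D\{v_D+b_D\}\), follows in the same way: for any \(D\) and any \(D'\), either \(D'\leqslant D\), in which case \(\min\{a_{D'},v_{D'}\}\leqslant v_D\), or \(D'>D\), in which case \(\min\{a_{D'},v_{D'}\}\leqslant a_{D'}\leqslant a_{D+1}=b_D\).

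\emph{Lower bound.} Fix \(D\in\{1,\ldots,m\}\) and set \(\delta^2=c_0\min\{a_D,v_D\}\). Restrict to Gaussian noise. Choose an orthonormal basis \(e_1,\ldots,e_{p_D}\) of \(W_D\). By Varshamov--Gilbert, take \(\omega^{(1)},\ldots,\omega^{(M)}\in\{0,1\}^{p_D}\) with pairwise Hamming distance at least \(p_D/8\) and \(\log M\geqslant p_D/8\). The case of small \(p_D\), where this fails, is handled by a two-point Le Cam argument. Define
\[
f_\omega=\frac{\delta}{\sqrt{p_D}}\sum_\ell(2\omega_\ell-1)e_\ell .
\]
Then \(\norm{f_\omega}_2^2=\delta^2\leqslant a_D\), so \(f_\omega\in\mathcal E^s_{d,k}(B)\) by the ball inclusion shown above. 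Pairwise distances are at least of order \(\delta^2\).

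The key computation is the Kullback--Leibler divergence. Because the design law does not depend on \(f\),
\[
\mathrm{KL}(\P_{f,P},\P_{g,P})=\frac{n}{2\sigma^2}\norm{f-g}_2^2\leqslant\frac{2n\delta^2}{\sigma^2}\leqslant2c_0p_D,
\]
where the last step uses \(\delta^2\leqslant c_0\sigma^2p_D/n\). For \(c_0\) small this is at most \(\alpha\log M\). Fano's inequality then gives a risk of order \(\delta^2\), which yields \(R_n^\star\geqslant c\min\{a_D,v_D\}\). Maximizing over \(D\) gives \(c\,\mathfrak R\).

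\emph{Main obstacle.} The only delicate point is keeping the constants universal. This requires handling \(p_D=1,2\), where Fano degenerates, by a two-point bound with the same KL computation. Note that \(p_D\geqslant d-1\geqslant1\) for \(D\geqslant1\). The corollary for \(B/\sigma^2\leqslant C_{\mathrm{snr}}\) is then immediate.
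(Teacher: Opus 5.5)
Your proposal is correct and follows essentially the same route as the paper. The upper bound comes from Proposition~\ref{prop:projection-risk} combined with the crossing argument at the largest cutoff \(D\) with \(v_D\leqslant a_D\); the lower bound comes from the Gaussian submodel, a Varshamov--Gilbert packing of the ball in \(W_D\), the identity \(\mathrm{KL}=\frac{n}{2\sigma^2}\norm{f-g}_2^2\), Fano's inequality for large \(p_D\), and a two-point bound for small \(p_D\). One small point: the paper applies Fano only for \(p_D\geqslant8\), where the packing guarantees at least three hypotheses with \(\log M\gtrsim p_D\), and uses the two-point bound for all \(1\leqslant p_D<8\) rather than just \(p_D=1,2\); this costs only a factor \(8\), since there \(\min\{a_D,\sigma^2/n\}\geqslant\frac18\min\{a_D,v_D\}\).
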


For the lower bound, we restrict the experiment to the
\(p_D\)-dimensional ball in \(W_D\) described above and apply a
finite-dimensional testing argument in the Gaussian noise submodel.
Maximizing the resulting bound over \(D\) gives the spectral scale
\(\mathfrak R_{n,d,k}(s,B,\sigma^2)\).

For the upper bound, Proposition~\ref{prop:projection-risk} yields a
bias--variance sum indexed by the cutoff \(D\).  A deterministic
crossing argument compares its minimum with the max--min expression
in \eqref{eq:spectral-minimax-scale}.  The upper bound is an oracle minimax statement.  For each fixed
\(D\), the estimator \(\widehat f_D^{\,\mathrm{proj}}\) itself does
not depend on \(s\), \(B\), or \(\sigma^2\).  However, the cutoff used to obtain the bound may, for example, be
chosen as
\[
  D_{\mathrm{or}}
  \in
  \arg\min_{0\leqslant D\leqslant m}
  \left\{
    \frac{\sigma^2p_D}{n}
    +
    b_D(s,B)
  \right\},
\]
and therefore depends on the parameters defining the response and
noise classes.  As usual in the definition of minimax risk, these
class parameters are treated as known.  Data-driven adaptation to
unknown \(s\), \(B\), or \(\sigma^2\) is not addressed here.

The condition
\(B/\sigma^2\leqslant C_{\mathrm{snr}}\) is not needed for either
inequality in \eqref{eq:johnson-minimax}.  It is used only to make
the upper and lower bounds comparable with constants that are uniform
over the stated parameter range.  The next section examines the
random-design effects that become relevant when the signal scale is
large relative to the observation-noise variance.

It is worth emphasizing that the nonasymptotic formulation permits
\(d\) and \(k\) themselves to vary with \(n\).  There is therefore no
single rate as a function of \(n\) alone without specifying their joint
growth.  Along any such sequence, Proposition~\ref{prop:projection-risk}
shows that, for fixed \(s\), \(B\), and \(\sigma^2\), the worst-case risk of
\(\widehat f_{D_n}^{\,\mathrm{proj}}\) over
\(\mathcal E^s_{d,k}(B)\) and \(\mathcal P_{\sigma^2}\) tends to zero if one can choose cutoffs
\(0\leqslant D_n\leqslant m\) such that
\[
  \frac{p_{D_n}}{n}\longrightarrow0
  \qquad\text{and}\qquad
  b_{D_n}(s,B)\longrightarrow0.
\]
The first condition requires the sample size to dominate the dimension
of the retained interaction space, while the second requires its
omitted harmonic tail to vanish.

\section{Random-design uncertainty and stable least squares}
\label{sec:random-design}

The preceding section revealed that random sampling affects estimation
through more than observation noise.  We now study this design
uncertainty directly.  We first establish conditions under which the
empirical Gram matrix is well conditioned and least squares is stable.
We then quantify the information lost when sampled evaluations leave
part of the fitted space unidentified, construct a centered completion
estimator, and combine these results in rank-aware minimax bounds.

\subsection{Constant leverage and Gram concentration}

To remove the signal-dependent fluctuation of empirical projection,
least squares must be stable over the whole fitted space \(W_D\).
We therefore study whether the empirical squared norm induced by the
design is uniformly comparable to the population
\(L^2(\nu_{d,k})\)-norm on \(W_D\).

Fix \(1\leqslant D\leqslant m\), and let
\(e_1,\ldots,e_{p_D}\) be an orthonormal basis of \(W_D\).  For
\(S\in\Sdk\), define
\[
  x_D(S)
  =
  (e_1(S),\ldots,e_{p_D}(S))^\top,
\]
and define the empirical Gram matrix by
\[
  \widehat G_D
  =
  \frac1n\sum_{j=1}^n
  x_D(S_j)x_D(S_j)^\top.
\]
Orthonormality gives
\[
  \E_{S\sim\nu_{d,k}}
  [x_D(S)x_D(S)^\top]
  =
  I_{p_D},
\]
so the population Gram matrix is the identity. 

In general, the leverage score
\[
  \ell_D(S)
  :=
  \|x_D(S)\|_2^2
\]
may vary substantially with \(S\), and concentration of the empirical
Gram matrix is governed by its largest value
\[
  L_D
  :=
  \max_{S\in\Sdk}\ell_D(S).
\]
In the present Johnson setting, however, permutation invariance of
\(W_D\) and transitivity of the permutation action on \(\Sdk\) imply,
as established in \eqref{eq:constant-kernel-diagonal}, that the
leverage score is constant:
\[
  \ell_D(S)=p_D,
  \qquad S\in\Sdk.
\]
Consequently,
\(
  L_D=p_D.
\)
This is the smallest possible value of the maximum leverage for a
\(p_D\)-dimensional subspace of
\(L^2(\Sdk,\nu_{d,k})\).  Indeed, if
\(h_1,\ldots,h_{p_D}\) is any orthonormal basis of such a subspace,
then its leverage function
\[
  \ell(S)=\sum_{\ell=1}^{p_D}h_\ell(S)^2
\]
satisfies
\[
  \E_{S\sim\nu_{d,k}}[\ell(S)]
  =
  \sum_{\ell=1}^{p_D}\|h_\ell\|_2^2
  =
  p_D.
\]
Hence,
\[
  \max_{S\in\Sdk}\ell(S)
  \geqslant
  \E_{S\sim\nu_{d,k}}[\ell(S)]
  =
  p_D.
\]
Thus, in the present Johnson setting, the leverage is perfectly
balanced over the fixed-cardinality domain.  For a square matrix \(M\), let \(\|M\|_{\mathrm{op}}\) denote its
Euclidean operator norm. Moreover, for every
\(S\in\Sdk\),
\[
  \|x_D(S)x_D(S)^\top\|_{\mathrm{op}}
  =
  \|x_D(S)\|_2^2
  =
  \ell_D(S)
  =
  p_D.
\]

This uniform bound on the rank-one matrices entering
\(\widehat G_D\) is the key input to the matrix concentration argument
below.

Define
\[
  \mathcal A_D
  =
  \left\{
    \|\widehat G_D-I_{p_D}\|_{\mathrm{op}}
    \leqslant\frac12
  \right\}.
\]

\begin{proposition}[Johnson Gram concentration]
\label{prop:johnson-gram}
With
\[
  c_0
  =
  \frac32\log\left(\frac32\right)-\frac12>0,
\]
one has
\begin{equation}\label{eq:gram-concentration}
  \P_{\nu_{d,k}^{\otimes n}}(\mathcal A_D^c)
  \leqslant
  2p_D\exp\left(-c_0\frac{n}{p_D}\right).
\end{equation}
\end{proposition}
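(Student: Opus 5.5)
We apply Tropp's matrix Chernoff inequality \citep{Tropp2012} to the sum of independent positive semidefinite rank-one matrices
\[
  \widehat G_D=\sum_{j=1}^n Z_j,
  \qquad
  Z_j=\frac1n x_D(S_j)x_D(S_j)^\top .
\]
The summands are i.i.d.\ because the \(S_j\) are i.i.d.\ with law \(\nu_{d,k}\). By the constant leverage identity \eqref{eq:constant-kernel-diagonal}, each summand satisfies \(0\preceq Z_j\) and \(\lambda_{\max}(Z_j)=\|x_D(S_j)\|_2^2/n=p_D/n=:R\) almost surely. By orthonormality, \(\sum_j\E[Z_j]=I_{p_D}\), so \(\mu_{\min}=\mu_{\max}=1\). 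The event \(\mathcal A_D^c\) is contained in the union of \(\{\lambda_{\min}(\widehat G_D)<1/2\}\) and \(\{\lambda_{\max}(\widehat G_D)>3/2\}\). This union is the source of the factor \(2\) in \eqref{eq:gram-concentration}, while the factor \(p_D\) is the usual dimensional factor of the matrix Chernoff bound.

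Step one is the upper tail. With \(\delta=1/2\), the Chernoff bound gives
\[
  \P\{\lambda_{\max}\geqslant(1+\delta)\mu_{\max}\}
  \leqslant
  p_D\left[\frac{e^{\delta}}{(1+\delta)^{1+\delta}}\right]^{\mu_{\max}/R}
  =
  p_D\exp\left\{-\frac{n}{p_D}\left(\tfrac32\log\tfrac32-\tfrac12\right)\right\}.
\]
This is exactly \(p_D\exp(-c_0n/p_D)\). The boundary case (equality versus strict inequality) is immaterial, since Tropp's bound holds for \(\geqslant\).

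Step two is the lower tail. With \(\delta=1/2\), the Chernoff bound gives
\[
  \P\{\lambda_{\min}\leqslant(1-\delta)\}
  \leqslant
  p_D\left[\frac{e^{-\delta}}{(1-\delta)^{1-\delta}}\right]^{n/p_D}
  =
  p_D\exp\left\{-\frac{n}{p_D}\left(\tfrac12\log\tfrac12+\tfrac12\right)\right\}.
\]
The exponent constant here is \(\tfrac12-\tfrac12\log2\approx0.153\). This exceeds \(c_0=\tfrac32\log\tfrac32-\tfrac12\approx0.108\), so the lower-tail bound is at most \(p_D\exp(-c_0n/p_D)\). Verifying this numerical comparison is the one place requiring care. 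A clean way to do it is the general fact that \((1-\delta)\log(1-\delta)+\delta\geqslant(1+\delta)\log(1+\delta)-\delta\) for \(\delta\in[0,1)\). This follows because the difference vanishes at \(0\) and has derivative \(-\log(1-\delta)-\log(1+\delta)=-\log(1-\delta^2)\geqslant0\).

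Summing the two tails yields \eqref{eq:gram-concentration}. The main obstacle is only bookkeeping: matching the exact form of Tropp's Theorem 1.1 (its \(\mu_{\min},\mu_{\max},R\) normalization) and confirming that the lower-tail exponent dominates \(c_0\). The essential structural input, namely the uniform bound \(R=p_D/n\), is supplied by the constant-diagonal identity already established.
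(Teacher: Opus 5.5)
Your proposal is correct and follows the paper's proof: you write $\widehat G_D$ as a sum of independent rank-one positive semidefinite matrices with $\mu_{\min}=\mu_{\max}=1$ and $R=p_D/n$ (from constant leverage), apply Tropp's matrix Chernoff bounds at relative deviation $1/2$, check that the lower-tail exponent $\tfrac12-\tfrac12\log 2$ exceeds $c_0$, and finish with a union bound. Your derivative argument, which shows that the lower-tail exponent dominates the upper-tail one for every $\delta\in[0,1)$, is a valid supplement to the paper's direct numerical comparison.
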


To interpret \(\mathcal A_D\), write
\[
  g=\sum_{\ell=1}^{p_D}u_\ell e_\ell
  \qquad\text{for some }u\in\mathbb R^{p_D}.
\]
Then
\[
  g(S)=u^\top x_D(S),
  \qquad
  \|g\|_2^2=\|u\|_2^2,
  \qquad
  \frac1n\sum_{j=1}^ng(S_j)^2
  =
  u^\top\widehat G_Du.
\]
Since \(\widehat G_D\) is symmetric, the definition of
\(\mathcal A_D\) implies
\[
  \max_{1\leqslant\ell\leqslant p_D}
  |\lambda_\ell(\widehat G_D)-1|
  \leqslant\frac12.
\]
Hence every eigenvalue of \(\widehat G_D\) lies in
\([1/2,3/2]\), and the Rayleigh quotient gives, for every
\(g\in W_D\),
\begin{equation}\label{eq:norm-comparison}
  \frac12\|g\|_2^2
  \leqslant
  \frac1n\sum_{j=1}^ng(S_j)^2
  \leqslant
  \frac32\|g\|_2^2.
\end{equation}
In particular,
\(\lambda_{\min}(\widehat G_D)\geqslant1/2>0\), so
\(\widehat G_D\) is invertible and
\[
  \|\widehat G_D^{-1}\|_{\mathrm{op}}
  =
  \frac{1}{\lambda_{\min}(\widehat G_D)}
  \leqslant2.
\]

It follows from \eqref{eq:gram-concentration} that, for every
\(0<\delta<1\),
\[
  n
  \geqslant
  \frac{p_D}{c_0}
  \log\left(\frac{2p_D}{\delta}\right)
\]
is sufficient for \eqref{eq:norm-comparison} to hold with probability
at least \(1-\delta\).  By contrast, \(n\geqslant p_D\) is only a
necessary algebraic condition for invertibility: if \(n<p_D\), then
\(\widehat G_D\), being a sum of \(n\) rank-one matrices, has rank at
most \(n<p_D\).  The event \(\mathcal A_D\) provides the stronger
quantitative control needed for the least-squares analysis that follows.

\subsection{Stable least squares}

Throughout this subsection, fix a deterministic cutoff
\(1\leqslant D\leqslant m\).  This cutoff determines both the fitted
space \(W_D\) and the stability event \(\mathcal A_D\); no
data-driven selection of \(D\) is considered here.  On
\(\mathcal A_D\), the empirical and population norms are uniformly
comparable over \(W_D\), and the empirical Gram matrix is well
conditioned.  Least squares can therefore be applied without the
error amplification caused by a nearly singular Gram matrix. Define the stable least-squares estimator by
\[
  \widehat f_D^{\,\mathrm{LS}}
  =
  \begin{cases}
    \displaystyle
    \operatorname*{arg\,min}_{g\in W_D}
    \frac1n\sum_{j=1}^n\{Y_j-g(S_j)\}^2,
      & \text{on }\mathcal A_D,\\[3ex]
    0,
      & \text{on }\mathcal A_D^c.
  \end{cases}
\]
On \(\mathcal A_D\), the minimizer is unique because
\(\lambda_{\min}(\widehat G_D)\geqslant1/2\).  Since
\(\mathcal A_D\) depends only on the observed design points, this
fallback rule is data-driven and prevents an ill-conditioned Gram
matrix from producing an arbitrarily large estimate.

In the orthonormal basis \(e_1,\ldots,e_{p_D}\), the coefficient
vector of the estimator on \(\mathcal A_D\) is
\begin{equation}\label{eq:ls-coefficients}
  \widehat\theta_D^{\,\mathrm{LS}}
  =
  \widehat G_D^{-1}
  \frac1n\sum_{j=1}^nY_jx_D(S_j).
\end{equation}
Although this formula uses a basis, the resulting function is
basis-independent: on \(\mathcal A_D\), it is the unique minimizer of
a criterion defined intrinsically over the space \(W_D\).

For a general response \(f\), write
\[
  g_D=P_{\leqslant D}^{\circ}f,
  \qquad
  h_D=f-g_D,
\]
and let \(\theta_D\in\mathbb R^{p_D}\) be the coefficient vector of
\(g_D\), so that
\[
  g_D(S)=x_D(S)^\top \theta_D.
\]
Since \(h_D\perp W_D\),
\[
  \E_{S\sim\nu_{d,k}}[x_D(S)h_D(S)]=0.
\]
Thus, the random vectors
\(x_D(S_j)h_D(S_j)\) are centered; this fact controls the contribution
of the omitted component in the risk calculation. Substituting
\(Y_j=g_D(S_j)+h_D(S_j)+\varepsilon_j\) into
\eqref{eq:ls-coefficients} gives, on \(\mathcal A_D\),
\begin{equation}\label{eq:ls-error-decomposition}
  \widehat\theta_D^{\,\mathrm{LS}}-\theta_D
  =
  \widehat G_D^{-1}
  \left\{
    \frac1n\sum_{j=1}^nx_D(S_j)h_D(S_j)
    +
    \frac1n\sum_{j=1}^nx_D(S_j)\varepsilon_j
  \right\}.
\end{equation}
Thus the component \(g_D\) already contained in the fitted space
cancels exactly from the normal equations.  The coefficient error is
generated only by observation noise and by the empirical correlation
between the omitted component \(h_D\) and \(W_D\).

\begin{theorem}[Stable least-squares risk]
\label{thm:stable-ls}
For every \(s,B>0\) and \(1\leqslant D\leqslant m\),
\begin{align}
  &\sup_{f\in\mathcal E^s_{d,k}(B)}
   \sup_{P\in\mathcal P_{\sigma^2}}
   \E_{f,P}\!\left[
     \|\widehat f_D^{\,\mathrm{LS}}-f\|_2^2
   \right]
  \nonumber\\
  &\quad\leqslant
  \left(1+4\frac{p_D}{n}\right)b_D(s,B)
  +4\sigma^2\frac{p_D}{n}
  +B
   \min\left\{
     1,\,
     2p_D\exp\left(-c_0\frac{n}{p_D}\right)
   \right\}.
  \label{eq:stable-ls-risk}
\end{align}
\end{theorem}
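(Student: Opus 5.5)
We split the risk according to the event \(\mathcal A_D\), which depends only on the design.

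\emph{Off the event.} On \(\mathcal A_D^c\) the estimator is \(0\), so the loss there is \(\|f\|_2^2\leqslant B\) by \eqref{eq:ellipsoid-l2-bound}. Its contribution to the risk is therefore at most
\[
  B\,\P(\mathcal A_D^c)\leqslant B\min\bigl\{1,\,2p_D e^{-c_0n/p_D}\bigr\},
\]
by Proposition~\ref{prop:johnson-gram} together with the trivial bound \(\P\leqslant1\).

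\emph{On the event.} Orthogonality of \(W_D\) and \(W_D^\perp\) gives
\[
  \|\widehat f_D^{\,\mathrm{LS}}-f\|_2^2=\|\widehat\theta_D^{\,\mathrm{LS}}-\theta_D\|_2^2+\|h_D\|_2^2 .
\]
The term \(\|h_D\|_2^2\) is bounded by \(b_D(s,B)\) via \eqref{eq:harmonic-tail}, and after multiplying by the indicator of \(\mathcal A_D\) it is still at most \(b_D\). For the coefficient error we use \eqref{eq:ls-error-decomposition} and \(\|\widehat G_D^{-1}\|_{\mathrm{op}}\leqslant2\). This gives, on \(\mathcal A_D\),
\[
  \|\widehat\theta_D^{\,\mathrm{LS}}-\theta_D\|_2^2\leqslant 4\,\|Z_h+Z_\varepsilon\|_2^2,
\]
where \(Z_h=n^{-1}\sum_j x_D(S_j)h_D(S_j)\) and \(Z_\varepsilon=n^{-1}\sum_j x_D(S_j)\varepsilon_j\). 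We then drop the indicator by bounding \(\E[\one_{\mathcal A_D}\|Z_h+Z_\varepsilon\|_2^2]\leqslant\E\|Z_h+Z_\varepsilon\|_2^2\).

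\emph{Second moments.} The cross term vanishes in expectation, because the noise is centered and independent of the design. For \(Z_\varepsilon\), independence across \(j\) gives
\[
  \E\|Z_\varepsilon\|_2^2=n^{-1}\E[\varepsilon^2]\,\E\|x_D(S)\|_2^2\leqslant\sigma^2p_D/n .
\]
For \(Z_h\), the summands are i.i.d.\ and centered, since \(h_D\perp W_D\). Hence
\[
  \E\|Z_h\|_2^2=n^{-1}\E[\|x_D(S)\|_2^2h_D(S)^2]=n^{-1}p_D\|h_D\|_2^2,
\]
where the last equality uses the constant leverage \eqref{eq:constant-kernel-diagonal}. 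This is at most \(p_Db_D/n\). Multiplying both second moments by \(4\) yields the terms \(4\sigma^2p_D/n\) and \(4(p_D/n)b_D\).

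Adding the three pieces gives \eqref{eq:stable-ls-risk}. The bound is uniform over \(f\in\mathcal E^s_{d,k}(B)\) and \(P\in\mathcal P_{\sigma^2}\), because only \(\E\varepsilon^2\leqslant\sigma^2\) and \(\|f\|_2^2\leqslant B\) were used.

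\emph{Main subtlety.} Nothing here is a real obstacle, but one step needs care: the factor \(4\) comes from \(\|\widehat G_D^{-1}\|_{\mathrm{op}}^2\leqslant4\), and this bound holds only on \(\mathcal A_D\). We must therefore apply it before removing the indicator, and only then enlarge the expectation to the whole space. That enlargement is valid since \(\|Z_h+Z_\varepsilon\|_2^2\geqslant0\). It is what allows the cross term to vanish and \(Z_h\) to be treated as an unconditioned i.i.d.\ centered sum.
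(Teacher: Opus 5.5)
Your proposal is correct and follows the paper's proof essentially step for step: you split on \(\mathcal A_D\), use the Pythagorean identity on the event together with \(\|\widehat G_D^{-1}\|_{\mathrm{op}}\leqslant 2\), and compute the second moments of \(Z_h\) and \(Z_\varepsilon\) via constant leverage. The only difference is an inessential reordering. The paper cancels the cross term conditionally on the design while \(\widehat G_D^{-1}\) and \(\one_{\mathcal A_D}\) are still present, and then bounds each piece separately; you first bound by \(4\|Z_h+Z_\varepsilon\|_2^2\), drop the indicator, and then cancel the cross term, which is equally valid.
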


The bound holds for every deterministic cutoff \(D\); optimizing its
right-hand side over \(D\) would again constitute an oracle choice.

The three terms in \eqref{eq:stable-ls-risk} have distinct origins.
The quantity \(b_D(s,B)\) bounds the approximation error
\(\|h_D\|_2^2\), while the additional factor \(4p_D/n\) accounts for
the empirical correlation of \(h_D\) with the fitted space.  The term
\(4\sigma^2p_D/n\) is the variance caused by observation noise.
Finally, on \(\mathcal A_D^c\) the estimator is zero, so its loss is
bounded by \(\|f\|_2^2\leqslant B\); Gram concentration produces the
last term of the bound.

Equation~\eqref{eq:ls-error-decomposition} shows that, on
\(\mathcal A_D\), the estimation error does not involve the fitted
component \(g_D\): only the omitted component \(h_D\) and the
observation noise remain.  Thus, unlike empirical projection, least
squares pays no random-design variance for the component already
contained in \(W_D\).  In particular, if \(h_D=0\) and the
observations are noiseless, then
\(\widehat f_D^{\,\mathrm{LS}}=f\) on \(\mathcal A_D\).

To make the resulting bound explicit, suppose that \(n\geqslant p_D\) and, for some \(a>0\),
\[
  n
  \geqslant
  \frac{a+1}{c_0}\,
  p_D\log(2p_D).
\]
Then
\[
  1+4\frac{p_D}{n}\leqslant5
\]
and
\[
  2p_D\exp\left(-c_0\frac{n}{p_D}\right)
  \leqslant p_D^{-a}.
\]
Consequently, \eqref{eq:stable-ls-risk} becomes
\[
  \sup_{f\in\mathcal E^s_{d,k}(B)}
  \sup_{P\in\mathcal P_{\sigma^2}}
  \E_{f,P}\!\left[
    \|\widehat f_D^{\,\mathrm{LS}}-f\|_2^2
  \right]
  \leqslant
  5b_D(s,B)
  +4\sigma^2\frac{p_D}{n}
  +Bp_D^{-a}.
\]
Thus, in the stable window, the signal-dependent variance
\(Bp_D/n\) appearing in the empirical-projection bound is absent.
The signal contributes only through approximation error and the
Gram-instability term.

The preceding least-squares guarantee is useful only when
\(\mathcal A_D\) has high probability. When the fitted space is too large relative to the sampled design, the
empirical Gram matrix may even lose rank.  Distinct functions in
\(W_D\) can then agree at every sampled subset, so part of the fitted
response is not identifiable from the observations. We now quantify this intrinsic information loss.  At the full level,
rank deficiency admits a concrete interpretation in terms of
unobserved subsets, which will motivate the centered completion
estimator.

\subsection{Unidentified directions and incomplete coverage}

Proposition~\ref{prop:johnson-gram} ensures that, on
\(\mathcal A_D\), the sampled evaluation map is injective and well
conditioned on \(W_D\).  We now study the complementary situation in which the sampled
evaluations do not uniquely identify functions in \(W_D\), and
quantify the part of this space that remains invisible to the design. This loss of information persists even in the absence
of observation noise.

Let \(\mathbf X_D\) be the \(n\times p_D\) matrix whose \(j\)-th row
is \(x_D(S_j)^\top\).  If
\[
  g=\sum_{\ell=1}^{p_D}u_\ell e_\ell\in W_D,
\]
then the vector of its sampled values is
\[
  \bigl(g(S_1),\ldots,g(S_n)\bigr)^\top
  =
  \mathbf X_Du.
\]
The sampled values depend on the coefficient vector \(u\) only through
\(\mathbf X_Du\).  Consequently, two coefficient vectors that differ
by an element of \(\ker(\mathbf X_D)\) produce exactly the same
noiseless observations. Thus,
\(\operatorname{rank}(\mathbf X_D)\) is the number of linearly
independent measurements of the coefficient vector supplied by the
design, whereas
\[
  p_D-\operatorname{rank}(\mathbf X_D)
  =
  \dim\ker(\mathbf X_D)
\]
is the dimension of the subspace of coefficient perturbations that
remain invisible.  We measure the expected proportion of such
unidentified directions by
\begin{equation}\label{eq:rank-deficiency}
  \rho_{D,n}
  =
  \frac1{p_D}
  \E_{\nu_{d,k}^{\otimes n}}
  \left[
    p_D-\operatorname{rank}(\mathbf X_D)
  \right],
  \qquad
  1\leqslant D\leqslant m.
\end{equation}
This quantity does not depend on the chosen orthonormal basis: a
change of basis multiplies \(\mathbf X_D\) on the right by an
invertible orthogonal matrix and therefore leaves its rank unchanged.

\begin{proposition}[Rank-deficiency lower bound]
\label{prop:rank-deficiency}
For every \(s,B,\sigma^2>0\),
\begin{equation}\label{eq:rank-deficiency-lower}
  R_n^\star(\mathcal E^s_{d,k}(B))
  \geqslant
  \max_{1\leqslant D\leqslant m}
  \frac{B}{\gamma_{D,d}^{\,s}}\rho_{D,n}.
\end{equation}
Moreover, for every \(1\leqslant D\leqslant m\),
\begin{equation}\label{eq:rank-deficiency-dimension}
  \rho_{D,n}
  \geqslant
  \left(1-\frac{n}{p_D}\right)_+.
\end{equation}
At the full level \(D=m\), let
\[
  \mathcal O_n
  =
  \{S_1,\ldots,S_n\}
  \subseteq\Sdk
\]
denote the set of distinct observed subsets, and let
\(
  M_n=N-|\mathcal O_n|
\)
be the number of unobserved subsets. Then
\begin{equation}\label{eq:full-rank-deficiency}
  \rho_{m,n}
  =
  \frac{
    \E_{\nu_{d,k}^{\otimes n}}[(M_n-1)_+]
  }{N-1}.
\end{equation}
Equivalently,
\begin{equation}\label{eq:full-rank-deficiency-explicit}
  \rho_{m,n}
  =
  \frac{
    N(1-1/N)^n-1
    +\P_{\nu_{d,k}^{\otimes n}}(M_n=0)
  }{N-1}.
\end{equation}
Here,
\[
  \P_{\nu_{d,k}^{\otimes n}}(M_n=0)
  =
  \sum_{j=0}^N
  (-1)^j\binom Nj
  \left(1-\frac jN\right)^n
\]
by inclusion--exclusion; in particular, this probability is zero when
\(n<N\).
\end{proposition}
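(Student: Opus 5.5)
We prove the three assertions of Proposition~\ref{prop:rank-deficiency} separately. The lower bound \eqref{eq:rank-deficiency-lower} is the only genuinely statistical step; \eqref{eq:rank-deficiency-dimension} and the full-level identities are linear algebra and occupancy counting.

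\emph{Lower bound \eqref{eq:rank-deficiency-lower}.} Fix \(1\leqslant D\leqslant m\) and restrict to the ball \(\mathcal B_D=\{g\in W_D:\norm{g}_2^2\leqslant a_D(s,B)\}\), which lies in \(\mathcal E^s_{d,k}(B)\) as shown before Theorem~\ref{thm:johnson-minimax}. The key ingredient is a Bayesian argument with a prior depending on the design. Condition on \((S_j)\), let \(K=\ker(\mathbf X_D)\) with \(\dim K=\kappa=p_D-\operatorname{rank}(\mathbf X_D)\), and put the prior \(f=\sum_\ell u_\ell e_\ell\) with \(u\) uniform on the sphere of radius \(\sqrt{a_D}\) in \(K\).

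\begin{itemize}
\item Every such \(f\) vanishes at all sampled subsets, so the law of the responses does not depend on \(u\); we may take any fixed noise law, e.g.\ Gaussian.
\item The posterior of \(u\) therefore equals the prior. Since the prior is symmetric, \(\E[u]=0\), so the optimal estimator projected onto \(K\) is zero.
\item By Pythagoras, the Bayes risk is at least \(\E\norm{u}^2=a_D\), and this holds for every design with \(\kappa\geqslant1\).
\end{itemize}

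This handles \(\kappa\geqslant1\) but gives \(a_D\,\P(\kappa\geqslant1)\) rather than \(a_D\,\E[\kappa]/p_D\), and \(\E[\kappa]/p_D\leqslant\P(\kappa\geqslant1)\) is the wrong direction. So I would instead make the prior design-independent. Take \(u\) uniform on the full sphere of radius \(\sqrt{a_D}\) in \(\R^{p_D}\). Given the data, the component \(\Pi_K u\) is conditionally symmetric given \(\Pi_{K^\perp}u\), because the observations depend only on \(\Pi_{K^\perp}u\) and the sphere is invariant under reflection in \(K\). Hence every estimator incurs conditional loss at least \(\E\norm{\Pi_K u}^2\). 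By rotation invariance, \(\E\norm{\Pi_Ku}^2=a_D\,\kappa/p_D\) conditionally on the design. Taking expectations over the design gives \(a_D\rho_{D,n}\), and the supremum over \(f\) dominates this Bayes risk. The symmetry step needs care: the conditional law of \(u\) given \(\Pi_{K^\perp}u\) must be invariant under \(u\mapsto u-2\Pi_Ku\). This holds because that map is an orthogonal reflection preserving the sphere and the likelihood, and it is the point I expect to require the most careful writing. Maximizing over \(D\) gives \eqref{eq:rank-deficiency-lower}.

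\emph{Dimension bound \eqref{eq:rank-deficiency-dimension}.} Since \(\operatorname{rank}(\mathbf X_D)\leqslant\min(n,p_D)\), we have \(p_D-\operatorname{rank}(\mathbf X_D)\geqslant(p_D-n)_+\) pointwise. Dividing by \(p_D\) and taking expectations gives the bound.

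\emph{Full level.} At \(D=m\), \(W_m\) is the space of all centered functions, of dimension \(N-1\). The rank of \(\mathbf X_m\) equals the dimension of the restriction of \(W_m\) to the observed set \(\mathcal O_n\).

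\begin{itemize}
\item If \(M_n\geqslant1\), choosing values on \(\mathcal O_n\) freely and compensating on an unobserved subset shows that the restriction map \(W_m\to\R^{\mathcal O_n}\) is onto. The rank is then \(|\mathcal O_n|=N-M_n\), and the deficiency is \(M_n-1\).
\item If \(M_n=0\), the rank is \(N-1\) and the deficiency is \(0\).
\end{itemize}

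In both cases the deficiency is \((M_n-1)_+\), which gives \eqref{eq:full-rank-deficiency}. For \eqref{eq:full-rank-deficiency-explicit}, write \((M_n-1)_+=M_n-1+\one_{\{M_n=0\}}\) and use \(\E M_n=N(1-1/N)^n\), since each subset is missed with probability \((1-1/N)^n\). The formula for \(\P(M_n=0)\) is the standard inclusion--exclusion over sets of missed subsets. Finally, \(M_n\geqslant N-n>0\) when \(n<N\), so this probability vanishes in that case.
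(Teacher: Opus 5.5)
Your final argument is correct and is essentially the paper's proof. It uses a design-independent uniform prior on the sphere of squared radius $a_D$ in $W_D$. The observations depend on the coefficient vector only through its $\operatorname{row}(\mathbf X_D)$ component, so symmetry makes the posterior mean of the $\ker(\mathbf X_D)$ component vanish. The isotropy computation then gives $\E\|\Pi_K u\|_2^2=a_D\kappa/p_D$. At the full level you use the same rank count $\operatorname{rank}(\mathbf X_m)=\min(|\mathcal O_n|,N-1)$ together with occupancy and inclusion--exclusion. The paper fixes the noiseless law $\delta_0\in\mathcal P_{\sigma^2}$, which slightly simplifies your reflection step but changes nothing else.

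One correction concerns your stated reason for discarding the design-dependent prior. Since $\kappa/p_D\leqslant\one_{\{\kappa\geqslant1\}}$, the bound $a_D\P(\kappa\geqslant1)$ would be \emph{stronger} than the target, not in the wrong direction. The real defect is that a prior chosen after seeing $S_1,\ldots,S_n$ is illegitimate, because $R_n^\star$ takes the supremum over $f$ outside the design expectation. Indeed that stronger bound is false. For $k=1$, $n\approx N\log N$ and $\sigma^2\to0$, $\P(M_n\geqslant2)$ stays of constant order while $\rho_{1,n}=O(1/N)$, which contradicts Proposition~\ref{prop:centered-completion}.
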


The mechanism behind the lower bound can be seen by restricting the
noise law to the degenerate distribution \(\delta_0\), under which
\(\varepsilon=0\) almost surely.  This noiseless law belongs to
\(\mathcal P_{\sigma^2}\) for every \(\sigma^2>0\). For a fixed \(D\), consider a response
drawn uniformly from the sphere
\[
  \left\{
    g\in W_D:
    \|g\|_2^2
    =
    \frac{B}{\gamma_{D,d}^{\,s}}
  \right\}.
\]
This sphere is contained in the Johnson--Sobolev ellipsoid.  Recall that
\[
  \mathbb R^{p_D}
  =
  \operatorname{row}(\mathbf X_D)
  \mathbin{\oplus^\perp}
  \ker(\mathbf X_D).
\]
Conditional
on the sampled subsets, the noiseless observations determine the component of the response's
coefficient vector lying in the row space of \(\mathbf X_D\), but
they do not determine its component in \(\ker(\mathbf X_D)\).
Rotational symmetry of the spherical prior makes the conditional mean
of this unidentified component equal to zero.  Averaged over the
spherical prior, its expected squared norm, conditional on the design,
is
\[
  \frac{B}{\gamma_{D,d}^{\,s}}\,
  \frac{
    p_D-\operatorname{rank}(\mathbf X_D)
  }{p_D}.
\]
Averaging over the design gives the term
\(B\rho_{D,n}/\gamma_{D,d}^{\,s}\) in
\eqref{eq:rank-deficiency-lower}.  

At the full level \(D=m\), this loss of information has the concrete
coverage interpretation in
\eqref{eq:full-rank-deficiency}.  If \(M_n\) subsets are unobserved,
then the centered functions supported on those subsets and vanishing
at every observed input form a space of dimension
\((M_n-1)_+\).  The subtraction of one comes from centering: once the
response is known at all but one subset, its value at the last subset
is determined by the zero-sum constraint.  Hence the full-level rank deficiency measures the dimension of the
entire unresolved part of the response, rather than the probability
of a single incomplete-coverage event.

\subsection{Centered completion}

At the full level, the rank-deficiency obstruction suggests a natural
estimator.  Repeated observations of the same subset are first
averaged, and the values at unobserved subsets are then chosen to
satisfy the centering constraint.

For \(S\in\Sdk\), let
\[
  C_n(S)
  =
  \sum_{j=1}^n\one_{\{S_j=S\}}.
\]
Equivalently, the set of observed subsets introduced above satisfies
\[
  \mathcal O_n
  =
  \{S\in\Sdk:C_n(S)>0\}.
\]
For every \(S\in\mathcal O_n\), define the corresponding cell mean by
\[
  \overline Y(S)
  =
  \frac1{C_n(S)}
  \sum_{j:S_j=S}Y_j.
\]
Recall that
\[
  M_n
  =
  N-|\mathcal O_n|
  =
  \#\left\{
    S\in\Sdk:C_n(S)=0
  \right\}
\]
is the number of subsets that are not observed.

If \(M_n\geqslant1\), define
\begin{equation}\label{eq:centered-completion}
  \widehat f^{\,\mathrm{comp}}(S)
  =
  \begin{cases}
    \overline Y(S),
      & S\in\mathcal O_n,\\[1ex]
    \displaystyle
    -\frac1{M_n}
     \sum_{T\in\mathcal O_n}\overline Y(T),
      & S\notin\mathcal O_n.
  \end{cases}
\end{equation}
Thus, all unobserved subsets receive the same value, chosen so that
\[
  \sum_{S\in\Sdk}\widehat f^{\,\mathrm{comp}}(S)=0.
\]
Among all centered functions agreeing with the observed cell means,
this choice has the smallest \(L^2(\nu_{d,k})\)-norm: subject to a
fixed sum, the sum of the squared values assigned to the unobserved
subsets is minimized when those values are equal.

If \(M_n=0\), every subset is observed, so there are no missing values
that can be chosen to enforce centering.  We instead subtract the
average of the complete table of cell means:
\[
  \widehat f^{\,\mathrm{comp}}(S)
  =
  \overline Y(S)
  -
  \frac1N\sum_{T\in\Sdk}\overline Y(T).
\]
The resulting function is centered because its values sum to zero
over \(\Sdk\).  It is obtained from
\(S\mapsto\overline Y(S)\) by subtracting its uniform average and is
therefore the closest centered function to
\(S\mapsto\overline Y(S)\) in \(L^2(\nu_{d,k})\).

For every fixed \(S\in\Sdk\), the count \(C_n(S)\) has distribution
\(\operatorname{Bin}(n,1/N)\).  Let
\(C\sim\operatorname{Bin}(n,1/N)\), and set
\begin{equation}\label{eq:inverse-count-factor}
  \eta_{n,N}
  =
  \sum_{c=1}^n\frac1c\,\P(C=c).
\end{equation}

\begin{proposition}[Risk of centered completion]
\label{prop:centered-completion}
For every \(s,B>0\),
\begin{equation}\label{eq:centered-completion-risk}
  \sup_{f\in\mathcal E^s_{d,k}(B)}
  \sup_{P\in\mathcal P_{\sigma^2}}
  \E_{f,P}\!\left[
    \norm{\widehat f^{\,\mathrm{comp}}-f}_2^2
  \right]
  \leqslant
  B\rho_{m,n}
  +
  2\sigma^2\eta_{n,N},
\end{equation}
where \(\eta_{n,N}\) is defined in
\eqref{eq:inverse-count-factor} and satisfies
\begin{equation}\label{eq:inverse-count-bound}
  \eta_{n,N}
  \leqslant
  \min\left\{
    1,\frac{2N}{n+1}
  \right\}.
\end{equation}
\end{proposition}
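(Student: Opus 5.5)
The plan is to condition on the design \((S_1,\ldots,S_n)\), split the error of \(\widehat f^{\,\mathrm{comp}}\) into a signal part and a noise part, and show that the cross term vanishes. I will write \(\overline\varepsilon(S)=C_n(S)^{-1}\sum_{j:S_j=S}\varepsilon_j\) for \(S\in\mathcal O_n\), so that \(\overline Y(S)=f(S)+\overline\varepsilon(S)\). Conditionally on the design, the \(\overline\varepsilon(S)\) are independent, centered, and have variance at most \(\sigma^2/C_n(S)\). Set \(\mathcal U_n=\Sdk\setminus\mathcal O_n\).

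\emph{Case \(M_n=0\).} The error is \(\overline\varepsilon(S)-N^{-1}\sum_T\overline\varepsilon(T)\). Since subtracting the mean is an orthogonal projection, its squared norm is at most \(N^{-1}\sum_S\overline\varepsilon(S)^2\).

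\emph{Case \(M_n\geqslant1\).} Because \(f\) is centered, \(\sum_{T\in\mathcal O_n}f(T)=-\sum_{T\in\mathcal U_n}f(T)\). Hence, for \(S\in\mathcal U_n\),
\[
  \widehat f^{\,\mathrm{comp}}(S)-f(S)
  =
  \{\overline f_{\mathcal U}-f(S)\}
  -\frac1{M_n}\sum_{T\in\mathcal O_n}\overline\varepsilon(T),
\]
where \(\overline f_{\mathcal U}\) is the mean of \(f\) over \(\mathcal U_n\). The first bracket sums to zero over \(\mathcal U_n\), while the second term is constant in \(S\). Therefore the squared error over \(\mathcal U_n\) splits exactly as
\[
  \sum_{S\in\mathcal U_n}\{f(S)-\overline f_{\mathcal U}\}^2
  +M_n^{-1}\Bigl(\sum_{\mathcal O_n}\overline\varepsilon\Bigr)^2 .
\]
On \(\mathcal O_n\) the error is simply \(\overline\varepsilon(S)\). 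Taking conditional expectations over the noise, the total noise contribution is at most
\[
  (1+M_n^{-1})\,\frac1N\sum_{S\in\mathcal O_n}\frac{\sigma^2}{C_n(S)}
  \leqslant
  \frac{2\sigma^2}{N}\sum_{S}\frac{\one_{\{C_n(S)>0\}}}{C_n(S)} .
\]
The same bound covers the case \(M_n=0\). Averaging over the design, and using \(C_n(S)\sim\operatorname{Bin}(n,1/N)\) for each \(S\), gives exactly \(2\sigma^2\eta_{n,N}\).

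The main step is the signal term \(N^{-1}\sum_{\mathcal U_n}(f-\overline f_{\mathcal U})^2\), which must be controlled in expectation by \(B\rho_{m,n}\) for a worst-case fixed \(f\). I will use exchangeability of the i.i.d.\ uniform design under permutations of \(\Sdk\). Conditionally on \(M_n=M\), the unobserved set \(\mathcal U_n\) is then uniformly distributed over \(M\)-subsets of \(\Sdk\), so \(\mathcal U_n\) is a simple random sample without replacement from the finite population \(\{f(S)\}\). That population has mean zero and population variance \(N^{-1}\sum_S f(S)^2=\norm{f}_2^2\). The classical unbiasedness of the sample variance under sampling without replacement gives
\[
  \E\Bigl[\sum_{\mathcal U_n}(f-\overline f_{\mathcal U})^2\Bigm| M_n=M\Bigr]
  =
  (M-1)_+\,\frac{N}{N-1}\,\norm{f}_2^2 .
\]
Dividing by \(N\), using \(\norm{f}_2^2\leqslant B\) from \eqref{eq:ellipsoid-l2-bound}, and averaging over \(M_n\) gives \(B\,\E[(M_n-1)_+]/(N-1)=B\rho_{m,n}\), by \eqref{eq:full-rank-deficiency} in Proposition~\ref{prop:rank-deficiency}. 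This bound holds for every \(f\) and every \(P\), so taking suprema yields \eqref{eq:centered-completion-risk}.

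For \eqref{eq:inverse-count-bound}, the bound \(\eta_{n,N}\leqslant1\) is immediate from \(1/c\leqslant1\). For the other bound I use \(1/c\leqslant2/(c+1)\) for \(c\geqslant1\), together with the standard binomial identity
\[
  \E\Bigl[\frac1{C+1}\Bigr]
  =
  \frac{1-(1-1/N)^{n+1}}{(n+1)/N}
  \leqslant
  \frac N{n+1},
\]
which gives \(\eta_{n,N}\leqslant 2N/(n+1)\).
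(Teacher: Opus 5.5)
Your proof is correct. For the noise term and for the bound on \(\eta_{n,N}\) it follows the paper essentially line by line; the genuine difference is in the signal term.

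The paper first identifies \(f-\widehat f^{\,\mathrm{comp},0}\) as the orthogonal projection \(P_{\mathcal H_n}f\) onto the centered functions supported on the unobserved cells. It then notes that the averaged projector \(\E[P_{\mathcal H_n}]\) commutes with every permutation of \(\Sdk\). It therefore has the form \((a_n-b_n)I_N+b_n\mathbf 1\mathbf 1^\top\) and acts on \(W_m\) as a scalar, and a trace computation shows this scalar is \(\E[\dim\mathcal H_n]/(N-1)=\rho_{m,n}\).

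You instead take a direct sampling route. You write the signal error pathwise as \(N^{-1}\sum_{S\in\mathcal U_n}\{f(S)-\overline f_{\mathcal U}\}^2\), so your exact split on \(\mathcal U_n\) makes the cross term vanish identically rather than only in conditional mean. You then use that \(\mathcal U_n\) is a uniform \(M\)-subset given \(M_n=M\), and apply the finite-population sample-variance identity.

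What your route buys:
\begin{itemize}
\item It is more elementary.
\item It gives the sharper conditional statement that the expected signal error given \(M_n=M\) equals \((M-1)_+\norm{f}_2^2/(N-1)\).
\item Its only external input is the without-replacement formula, which you can check in two lines from the inclusion probabilities \(M/N\) and \(M(M-1)/\{N(N-1)\}\) together with \(\sum_S f(S)=0\).
\end{itemize}

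What the paper's route buys: it avoids conditioning on \(M_n\) and any moment computation. It also explains structurally why the completion error equals the rank-deficiency quantity: the loss is the average of the projector onto exactly the unidentified subspace (the kernel of evaluation on \(W_m\)). Symmetry forces that average to be the scalar \(\E[\dim\mathcal H_n]/\dim W_m\), the same quantity that drives the Bayes lower bound in Proposition~\ref{prop:rank-deficiency}.
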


To isolate the error caused by incomplete coverage, consider the
noiseless version of the estimator.  Let
\(\widehat f^{\,\mathrm{comp},0}\) denote the estimator obtained by
applying the same completion rule to the exact observed values
\(f(S)\).  If \(M_n\geqslant1\), then
\[
  \widehat f^{\,\mathrm{comp},0}(S)
  =
  \begin{cases}
    f(S),
      & S\in\mathcal O_n,\\[1.2ex]
    \displaystyle
    \frac1{M_n}\sum_{T\in\mathcal U_n}f(T),
      & S\in\mathcal U_n,
  \end{cases}
  \qquad
  \mathcal U_n=\Sdk\setminus\mathcal O_n.
\]
Indeed, since \(f\) is centered,
\[
  -\frac1{M_n}\sum_{T\in\mathcal O_n}f(T)
  =
  \frac1{M_n}\sum_{T\in\mathcal U_n}f(T).
\]
If \(M_n=0\), every subset is observed and
\(\widehat f^{\,\mathrm{comp},0}=f\).

Let
\[
  \mathcal H_n
  =
  \left\{
    h\in W_m:
    h(S)=0
    \ \text{for every }S\in\mathcal O_n
  \right\}.
\]
Equivalently,
\[
  \mathcal H_n
  =
  \left\{
    h:\Sdk\to\mathbb R:
    h(S)=0\ \text{for }S\in\mathcal O_n,
    \quad
    \sum_{S\in\mathcal U_n}h(S)=0
  \right\}.
\]
This is the space of centered functions supported on the unobserved
subsets.  From the preceding formula,
\[
  \bigl(f-\widehat f^{\,\mathrm{comp},0}\bigr)(S)
  =
  \begin{cases}
    0,
      & S\in\mathcal O_n,\\[1.2ex]
    \displaystyle
    f(S)-\frac1{M_n}
    \sum_{T\in\mathcal U_n}f(T),
      & S\in\mathcal U_n,
  \end{cases}
\]
when \(M_n\geqslant1\). When \(M_n=0\), the difference is identically
zero. Its values on \(\mathcal U_n\) sum to zero,
so
\[
  f-\widehat f^{\,\mathrm{comp},0}\in\mathcal H_n.
\]

Suppose first that \(M_n\geqslant1\). For every \(h\in\mathcal H_n\),
\[
\begin{aligned}
  \left\langle
    \widehat f^{\,\mathrm{comp},0},h
  \right\rangle
  &=
  \frac1N
  \sum_{S\in\mathcal U_n}
  \widehat f^{\,\mathrm{comp},0}(S)h(S)\\
  &=
  \frac1N
  \left\{
    \frac1{M_n}\sum_{T\in\mathcal U_n}f(T)
  \right\}
  \sum_{S\in\mathcal U_n}h(S)
  =0.
\end{aligned}
\]
When \(M_n=0\), one has \(\mathcal H_n=\{0\}\), so the same
orthogonality conclusion is immediate. Thus,
\(\widehat f^{\,\mathrm{comp},0}\in\mathcal H_n^\perp\), and the
orthogonal decomposition
\[
  f
  =
  \widehat f^{\,\mathrm{comp},0}
  +
  \bigl(f-\widehat f^{\,\mathrm{comp},0}\bigr)
\]
shows that
\[
  f-\widehat f^{\,\mathrm{comp},0}
  =
  P_{\mathcal H_n}f.
\]

The space \(\mathcal H_n\) has dimension \((M_n-1)_+\): when
\(M_n\geqslant1\), its values on the \(M_n\) unobserved subsets are
subject to one linear constraint, and when \(M_n=0\) it reduces to
\(\{0\}\).  The proof shows that
\[
  \E_{\nu_{d,k}^{\otimes n}}
  \left[
    \|P_{\mathcal H_n}f\|_2^2
  \right]
  =
  \rho_{m,n}\|f\|_2^2
  \leqslant
  B\rho_{m,n},
\]
which gives the first term in
\eqref{eq:centered-completion-risk}.

We now return to the original model with observation noise.  The
second term in \eqref{eq:centered-completion-risk} controls the
variance of the observed cell means and of the centering correction.
The factor \(\eta_{n,N}\) is the expected reciprocal count for a
fixed cell, with an empty cell contributing zero.  By
\eqref{eq:inverse-count-bound}, the second term is bounded by
\(4\sigma^2N/(n+1)\), and hence by \(4\sigma^2N/n\), for every
\(n\geqslant1\).

\subsection{A rank-aware minimax characterization}

Define the design-deficiency scale
\[
  \mathfrak D_{n,d,k}(s,B)
  =
  \max_{1\leqslant D\leqslant m}
  \frac{B}{\gamma_{D,d}^{\,s}}\rho_{D,n}.
\]
This scale complements the noise-driven spectral scale
\(\mathfrak R_{n,d,k}(s,B,\sigma^2)\) defined in
\eqref{eq:spectral-minimax-scale}. The spectral scale measures the noise-driven cost of estimation over
the harmonic approximation spaces. The design-deficiency scale measures
the loss caused by directions that the sampled evaluations do not
identify, a phenomenon that is already present in the noiseless
submodel.

\begin{theorem}[Rank-aware minimax bounds]
\label{thm:rank-aware-minimax}
There exist universal constants \(c,C>0\) such that, for every
\(d\geqslant2\), \(1\leqslant k\leqslant d-1\), \(n\geqslant2\),
and \(s,B,\sigma^2>0\),
\begin{align}
  c\left\{
    \mathfrak R_{n,d,k}(s,B,\sigma^2)
    +\mathfrak D_{n,d,k}(s,B)
  \right\}
  &\leqslant
  R_n^\star(\mathcal E^s_{d,k}(B))
  \nonumber\\
  &\leqslant
  C\gamma_{m,d}^{\,s}
  \left\{
    \mathfrak R_{n,d,k}(s,B,\sigma^2)
    +\mathfrak D_{n,d,k}(s,B)
  \right\}.
  \label{eq:rank-aware-minimax}
\end{align}
\end{theorem}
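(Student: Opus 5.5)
The plan is to prove the two inequalities in \eqref{eq:rank-aware-minimax} separately. The lower bound combines two results already established, and the upper bound splits on whether the signal budget exceeds the noise level.

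\emph{Lower bound.} Theorem~\ref{thm:johnson-minimax} gives $R_n^\star(\mathcal E^s_{d,k}(B))\geqslant c_1\mathfrak R_{n,d,k}(s,B,\sigma^2)$ for a universal constant $c_1>0$. Proposition~\ref{prop:rank-deficiency}, inequality \eqref{eq:rank-deficiency-lower}, gives $R_n^\star(\mathcal E^s_{d,k}(B))\geqslant\mathfrak D_{n,d,k}(s,B)$. Since the maximum of two nonnegative numbers is at least half their sum,
\[
R_n^\star(\mathcal E^s_{d,k}(B))\geqslant\max\{c_1\mathfrak R_{n,d,k}(s,B,\sigma^2),\mathfrak D_{n,d,k}(s,B)\}\geqslant \tfrac{\min\{c_1,1\}}2\{\mathfrak R_{n,d,k}(s,B,\sigma^2)+\mathfrak D_{n,d,k}(s,B)\}.
\]

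\emph{Upper bound, case $B\leqslant\sigma^2$.} The upper half of \eqref{eq:johnson-minimax} gives $R_n^\star\leqslant 2(1+B/\sigma^2)\mathfrak R_{n,d,k}(s,B,\sigma^2)\leqslant 4\mathfrak R_{n,d,k}(s,B,\sigma^2)$. This is at most $4\gamma_{m,d}^{\,s}\{\mathfrak R_{n,d,k}(s,B,\sigma^2)+\mathfrak D_{n,d,k}(s,B)\}$, because $\gamma_{m,d}\geqslant\gamma_{1,d}=1$.

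\emph{Upper bound, case $B>\sigma^2$.} Here I will use the centered completion estimator. By Proposition~\ref{prop:centered-completion} and \eqref{eq:inverse-count-bound}, its risk is at most
\[
B\rho_{m,n}+2\sigma^2\min\Bigl\{1,\frac{2N}{n+1}\Bigr\}.
\]

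The first term is handled by the $D=m$ term of the design-deficiency scale: $\mathfrak D_{n,d,k}(s,B)\geqslant B\rho_{m,n}/\gamma_{m,d}^{\,s}$, so $B\rho_{m,n}\leqslant\gamma_{m,d}^{\,s}\mathfrak D_{n,d,k}(s,B)$.

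For the noise term I compare with the $D=m$ term of the spectral scale, using $p_m=N-1$:
\[
\mathfrak R_{n,d,k}(s,B,\sigma^2)\geqslant\min\Bigl\{\frac{B}{\gamma_{m,d}^{\,s}},\frac{\sigma^2(N-1)}{n}\Bigr\}.
\]
Since $B>\sigma^2$ and $\gamma_{m,d}\geqslant1$, both entries are at least $\gamma_{m,d}^{-s}$ times the corresponding entries of $\sigma^2\min\{1,(N-1)/n\}$. Hence
\[
\gamma_{m,d}^{\,s}\,\mathfrak R_{n,d,k}(s,B,\sigma^2)\geqslant\sigma^2\min\{1,(N-1)/n\}.
\]
Because $1\leqslant k\leqslant d-1$, we have $N\geqslant2$ and so $N\leqslant2(N-1)$. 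This gives $2N/(n+1)\leqslant4(N-1)/n$, and therefore
\[
2\sigma^2\min\Bigl\{1,\frac{2N}{n+1}\Bigr\}\leqslant 8\sigma^2\min\{1,(N-1)/n\}\leqslant 8\gamma_{m,d}^{\,s}\,\mathfrak R_{n,d,k}(s,B,\sigma^2).
\]
Combining the two terms yields the claim with $C=8$, which also covers the first case.

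The step needing most care is the noise comparison in the case $B>\sigma^2$. The factor $\gamma_{m,d}^{\,s}$ enters exactly there, from converting $B/\gamma_{m,d}^{\,s}$ into a multiple of $\sigma^2$. I must also check that the constant-type bound $\eta_{n,N}\leqslant1$ and the coverage-type bound $2N/(n+1)$ are matched to the two branches of the minimum.
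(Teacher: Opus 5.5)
Your proof is correct and follows the paper's argument: the same lower-bound combination of Theorem~\ref{thm:johnson-minimax} and Proposition~\ref{prop:rank-deficiency}, the same choice between empirical projection and centered completion using the $D=m$ terms of $\mathfrak D_{n,d,k}(s,B)$ and $\mathfrak R_{n,d,k}(s,B,\sigma^2)$, and the same constant $C=8$. The only difference is where you split the cases. You compare $B$ with $\sigma^2$, whereas the paper compares $a_m(s,B)=B/\gamma_{m,d}^{\,s}$ with $\sigma^2$. So in the intermediate regime $\sigma^2<B\leqslant\gamma_{m,d}^{\,s}\sigma^2$, you pay the factor $\gamma_{m,d}^{\,s}$ in the completion noise comparison, while the paper pays it in the projection bound $1+B/\sigma^2\leqslant 2\gamma_{m,d}^{\,s}$.
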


The lower bound follows by combining the two separate lower bounds:
Theorem~\ref{thm:johnson-minimax} gives the spectral term, while
Proposition~\ref{prop:rank-deficiency} gives the design-deficiency
term.

For the upper bound, we use the better of empirical projection and
centered completion.  The relevant comparison is between the smallest squared
\(L^2\)-semi-axis of the Johnson--Sobolev ellipsoid
\[
  a_m(s,B)
  =
  \frac{B}{\gamma_{m,d}^{\,s}}
\]
and the noise variance \(\sigma^2\).  If
\(a_m(s,B)\leqslant\sigma^2\), then
\(B/\sigma^2\leqslant\gamma_{m,d}^{\,s}\), and the empirical
projection bound gives the required result.  If
\(a_m(s,B)>\sigma^2\), centered completion is more appropriate.  Its
signal term satisfies
\[
  B\rho_{m,n}
  \leqslant
  \gamma_{m,d}^{\,s}\mathfrak D_{n,d,k}(s,B),
\]
while its noise term is controlled by the full-dimensional
contribution to
\(\mathfrak R_{n,d,k}(s,B,\sigma^2)\).  The proof gives the precise
comparison in the two regimes.

Finally,
\[
  \gamma_{m,d}
  =
  \frac{m(d-m+1)}d
  \leqslant m.
\]
Consequently, for fixed \(m\) and \(s\), there exist a universal
constant \(c>0\) and a constant \(C_{m,s}<\infty\), depending only on
\(m\) and \(s\), such that
\begin{align*}
  c
  \left\{
    \mathfrak R_{n,d,k}(s,B,\sigma^2)
    +
    \mathfrak D_{n,d,k}(s,B)
  \right\}
  &\leqslant
  R_n^\star(\mathcal E^s_{d,k}(B))
  \nonumber\\
  &\leqslant
  C_{m,s}
  \left\{
    \mathfrak R_{n,d,k}(s,B,\sigma^2)
    +
    \mathfrak D_{n,d,k}(s,B)
  \right\}.
\end{align*}

For every fixed \(m\) and \(s\), these inequalities hold uniformly
over all \(d\), \(k\), \(n\), \(B\), and \(\sigma^2\) such that
\(\min(k,d-k)=m\), without any restriction on \(B/\sigma^2\).  In
particular, they apply to sequences with fixed \(k\) and
\(d\to\infty\), since \(m=k\) for all sufficiently large \(d\).

A concrete rate can be obtained in this regime.  Fix
\(k\), \(s\), \(B\), and \(\sigma^2\), let \(d=d_n\to\infty\), and
write
\(
  N_n=\binom{d_n}{k}.
\)
Suppose that \(n/N_n\to\infty\).  The lower inequality in
Theorem~\ref{thm:johnson-minimax}, applied to the \(D=m=k\) term in
the definition \eqref{eq:spectral-minimax-scale}, gives
\[
  R_n^\star(\mathcal E^s_{d_n,k}(B))
  \geqslant
  c\min\left\{
    \frac{B}{\gamma_{k,d_n}^{\,s}},
    \frac{\sigma^2(N_n-1)}{n}
  \right\}.
\]
Since \(\gamma_{k,d_n}\to k\), the first term in the minimum stays
bounded away from zero, whereas the second tends to zero; the minimum
is therefore eventually of order \(\sigma^2N_n/n\).  Conversely,
\eqref{eq:full-rank-deficiency}, the inequality
\((M_n-1)_+\leqslant M_n\), and
\(\E[M_n]=N_n(1-1/N_n)^n\) give
\[
\begin{aligned}
  \rho_{m,n}
  &\leqslant
  \frac{\E[M_n]}{N_n-1}
  =
  \frac{N_n}{N_n-1}
  \left(1-\frac1{N_n}\right)^n\\
  &\leqslant
  2\exp\left(-\frac{n}{N_n}\right),
\end{aligned}
\]
where we used \(N_n\geqslant2\) and \(1-x\leqslant e^{-x}\).
Together with
\(\eta_{n,N_n}\leqslant2N_n/(n+1)\),
Proposition~\ref{prop:centered-completion} gives
\[
  R_n^\star(\mathcal E^s_{d_n,k}(B))
  \leqslant
  2B\exp\left(-\frac{n}{N_n}\right)
  +
  4\sigma^2\frac{N_n}{n+1}.
\]
Under the assumption \(n/N_n\to\infty\),
\[
  \exp\left(-\frac{n}{N_n}\right)
  =
  o\left(\frac{N_n}{n}\right).
\]
Combining the preceding lower and upper bounds therefore shows that
the minimax risk is of order
\[
  \sigma^2\frac{N_n}{n}
  =
  \sigma^2\frac{\binom{d_n}{k}}{n},
\]
or, equivalently, of order \(\sigma^2d_n^k/n\) when \(k\) is fixed. For example, if \(d_n=k+\lceil\log n\rceil\), the resulting rate is
of order \(\sigma^2(\log n)^k/n\). Conversely, if \(n/N_n\) does not tend to infinity, the displayed
spectral lower bound remains bounded away from zero along a
subsequence.  Hence \(n/N_n\to\infty\) is necessary and sufficient
for minimax consistency in the fixed-\(k\) regime.

When \(m\) is allowed to grow, the factor
\(\gamma_{m,d}^{\,s}\) leaves room for improvement.  Removing it
would require an estimator that exploits the Sobolev geometry when
controlling unresolved components, whereas
\eqref{eq:centered-completion} uses only the centering constraint.

Numerical experiments reported in
Section~\ref{sec:experiments} of the Supplementary Material complement
these theoretical results.  They compare prediction under different
interaction profiles and illustrate the signal-dependent fluctuation
of empirical projection, Gram stability, and the loss of information
caused by incomplete coverage.

\section{Discussion}\label{sec:discussion}

We have studied the problem of learning an unknown response function
from noisy observations whose inputs are subsets of a fixed ground
set.  Each observed subset contains exactly \(k\) items among \(d\),
and the statistical objective is to estimate the response not only at
the sampled subsets, but over the entire fixed-cardinality domain.
This setting arises whenever the outcome of interest depends on a
combination of prescribed size and interactions among the selected
items may matter.

The fixed-cardinality constraint prevents the usual product-domain
coordinatewise methods from being applied unchanged, because adding
or removing a single item leaves the domain. We instead used the Johnson graph, whose edges correspond
to replacing one selected item by one unselected item.  Its harmonic
decomposition organizes response functions into successive interaction
levels.  The dimensions of these levels determine how many
coefficients must be estimated, while the Johnson eigenvalues measure
variation under local exchanges and provide a natural notion of
smoothness.  This leads to explicit approximation spaces, a
Johnson--Sobolev class, and projection estimators with finite-sample
risk bounds.

The resulting spectral scale characterizes the minimax risk up to
constants depending only on a fixed upper bound for the ratio
\(B/\sigma^2\) of the Johnson--Sobolev energy budget to the noise variance. 
This conclusion holds uniformly in \(d\), \(k\), and \(n\),
including when the sample size is smaller than, comparable to, or
larger than the number of possible subsets.  To cover arbitrary
values of \(B/\sigma^2\), we also incorporated the information lost
when the sampled subsets fail to identify every component of the
response.  The resulting rank-aware bounds are sharp up to the
explicit factor \(\gamma_{m,d}^{\,s}\), and hence up to constants
when \(m=\min(k,d-k)\) and \(s\) are fixed.

An important conclusion is that random sampling creates a difficulty
distinct from observation noise.  The empirical projection
coefficients fluctuate with the sampled subsets even when the
responses are noiseless. For a fixed cutoff \(D\), least squares eliminates the
signal-dependent design fluctuation of the component
\(P_{\leqslant D}^{\circ}f\) already contained in \(W_D\), on the
event that the corresponding empirical Gram matrix is stable. The constant
leverage of the Johnson approximation spaces gives a simple
sufficient condition for this stability. When the design does not identify the fitted space, however, no
estimator can recover the corresponding components uniformly over the
response class, even without noise. At the full interaction level, the unidentified subspace has
dimension \((M_n-1)_+\), where \(M_n\) is the number of subsets that
have not been observed; the subtraction of one reflects the centering
constraint. The centered completion estimator shows how repeated
observations and unobserved subsets can be handled jointly.  Thus, the
statistical error is governed by three distinct mechanisms:
approximation of the response, observation noise, and incomplete
coverage of the subset domain.

Our results concern statistical rather than computational optimality.
Once the feature vectors \(x_D(S_j)\) are available, empirical
projection requires \(O(np_D)\) arithmetic operations to compute its coefficients.  On \(\mathcal A_D\), which necessarily implies
\(n\geqslant p_D\), a standard QR implementation of stable least squares
requires \(O(np_D^2)\) operations. Centered completion can be constructed in a single pass through the
\(n\) observations.  It can then be stored implicitly using the
observed cell means together with the common value assigned to all
unobserved subsets. These operation counts exclude the cost of constructing the harmonic
feature vectors.

Several questions remain open. When \(m\) grows, removing the factor
\(\gamma_{m,d}^{\,s}\) would require an estimator that exploits the
smoothness geometry when controlling unresolved components, rather
than treating all of them uniformly. A related problem is to obtain more
explicit expressions for the intermediate rank deficiencies
\(\rho_{D,n}\). Nonuniform sampling presents another challenge, because the leverage
score need no longer be constant and the analysis must account for the
sampling distribution. Finally, practical applications on large domains require
sample-based harmonic algorithms that exploit low-order interactions
without enumerating all \(\binom dk\) subsets.  The same general
approach may extend to regression on other constrained combinatorial
objects, provided that one can identify the admissible local changes,
the associated harmonic decomposition, and the information retained
by the sampling design.

\bibliographystyle{apalike}
\bibliography{references}

\begin{thebibliography}{}

\bibitem[Balcan et~al., 2012]{BalcanConstantinIwataWang2012}
Balcan, M.-F., Constantin, F., Iwata, S., and Wang, L. (2012).
\newblock Learning valuation functions.
\newblock In Mannor, S., Srebro, N., and Williamson, R.~C., editors, {\em
  Proc.\ 25th COLT}, volume~23, pages 4.1--4.24. PMLR.

\bibitem[Balcan et~al., 2016]{BalcanVitercikWhite2016}
Balcan, M.-F., Vitercik, E., and White, C. (2016).
\newblock Learning combinatorial functions from pairwise comparisons.
\newblock In Feldman, V., Rakhlin, A., and Shamir, O., editors, {\em Proc.\
  29th COLT}, volume~49, pages 310--335. PMLR.

\bibitem[Bannai and Ito, 1984]{BannaiIto1984}
Bannai, E. and Ito, T. (1984).
\newblock {\em Algebraic Combinatorics {I}: Association Schemes}.
\newblock Benjamin/Cummings, Menlo Park.

\bibitem[Benavoli et~al., 2023]{BenavoliAzzimontiPiga2023}
Benavoli, A., Azzimonti, D., and Piga, D. (2023).
\newblock Learning choice functions with {G}aussian processes.
\newblock In Evans, R.~J. and Shpitser, I., editors, {\em Proc.\ 39th UAI},
  volume 216, pages 141--151. PMLR.

\bibitem[Brouwer et~al., 1989]{BrouwerCohenNeumaier1989}
Brouwer, A.~E., Cohen, A.~M., and Neumaier, A. (1989).
\newblock {\em Distance-Regular Graphs}.
\newblock Springer, Berlin.

\bibitem[Cohen et~al., 2013]{CohenDavenportLeviatan2013}
Cohen, A., Davenport, M.~A., and Leviatan, D. (2013).
\newblock On the stability and accuracy of least squares approximations.
\newblock {\em Found. Comput. Math.}, 13:819--834.

\bibitem[Cohen et~al., 2019]{CohenDavenportLeviatan2019}
Cohen, A., Davenport, M.~A., and Leviatan, D. (2019).
\newblock Correction to: On the stability and accuracy of least squares
  approximations.
\newblock {\em Found. Comput. Math.}, 19:239.

\bibitem[De and Chakrabarti, 2022]{DeChakrabarti2022}
De, A. and Chakrabarti, S. (2022).
\newblock Neural estimation of submodular functions with applications to
  differentiable subset selection.
\newblock In Koyejo, S., Mohamed, S., Agarwal, A., Belgrave, D., Cho, K., and
  Oh, A., editors, {\em Adv. Neural Inf. Process. Syst.}, volume~35, pages
  19537--19552. Curran Associates, Inc.

\bibitem[Delsarte, 1973]{Delsarte1973}
Delsarte, P. (1973).
\newblock {\em An Algebraic Approach to the Association Schemes of Coding
  Theory}.
\newblock Number~10 in Philips Research Reports Supplements. N.V. Philips'
  Gloeilampenfabrieken, Eindhoven.
\newblock Doctoral dissertation, Universit{\'e} Catholique de Louvain.

\bibitem[Diaconis, 1988]{Diaconis1988}
Diaconis, P. (1988).
\newblock {\em Group Representations in Probability and Statistics}, volume~11
  of {\em Lecture Notes--Monograph Series}.
\newblock Institute of Mathematical Statistics, Hayward.

\bibitem[Diaconis, 1989]{Diaconis1989}
Diaconis, P. (1989).
\newblock A generalization of spectral analysis with application to ranked
  data.
\newblock {\em Ann. Statist.}, 17:949--979.

\bibitem[Diaconis and Rockmore, 1993]{DiaconisRockmore1993}
Diaconis, P. and Rockmore, D. (1993).
\newblock Efficient computation of isotypic projections for the symmetric
  group.
\newblock In Finkelstein, L. and Kantor, W.~M., editors, {\em Groups and
  Computation}, pages 87--104. American Mathematical Society, Providence.

\bibitem[Dolhansky and Bilmes, 2016]{DolhanskyBilmes2016}
Dolhansky, B.~W. and Bilmes, J.~A. (2016).
\newblock Deep submodular functions: Definitions and learning.
\newblock In Lee, D., Sugiyama, M., von Luxburg, U., Guyon, I., and Garnett,
  R., editors, {\em Adv. Neural Inf. Process. Syst.}, volume~29, pages
  3404--3412. Curran Associates, Inc.

\bibitem[Filmus, 2016]{Filmus2016}
Filmus, Y. (2016).
\newblock An orthogonal basis for functions over a slice of the {B}oolean
  hypercube.
\newblock {\em Electron. J. Combin.}, 23:P1.23.

\bibitem[Gottlieb, 1966]{Gottlieb1966}
Gottlieb, D.~H. (1966).
\newblock A certain class of incidence matrices.
\newblock {\em Proc. Amer. Math. Soc.}, 17:1233--1237.

\bibitem[Green et~al., 2021]{GreenBalakrishnanTibshirani2021}
Green, A., Balakrishnan, S., and Tibshirani, R.~J. (2021).
\newblock Minimax optimal regression over {S}obolev spaces via {L}aplacian
  regularization on neighborhood graphs.
\newblock In Banerjee, A. and Fukumizu, K., editors, {\em Proc.\ 24th AISTATS},
  volume 130, pages 2602--2610. PMLR.

\bibitem[Iglesias and Natale, 2022]{IglesiasNatale2022}
Iglesias, R. and Natale, M. (2022).
\newblock A fast {F}ourier transform for the {J}ohnson graph.
\newblock {\em J. Fourier Anal. Appl.}, 28:62.

\bibitem[Kirichenko and van Zanten, 2017]{KirichenkoVanZanten2017}
Kirichenko, A. and van Zanten, H. (2017).
\newblock Estimating a smooth function on a large graph by {B}ayesian
  {L}aplacian regularisation.
\newblock {\em Electron. J. Statist.}, 11:891--915.

\bibitem[Kirichenko and van Zanten, 2018]{KirichenkoVanZanten2018}
Kirichenko, A. and van Zanten, H. (2018).
\newblock Minimax lower bounds for function estimation on graphs.
\newblock {\em Electron. J. Statist.}, 12:651--666.

\bibitem[Liang et~al., 2024]{LiangWanDong2024}
Liang, H., Wan, X., and Dong, X. (2024).
\newblock Bayesian optimization of functions over node subsets in graphs.
\newblock In Globerson, A., Mackey, L., Belgrave, D., Fan, A., Paquet, U.,
  Tomczak, J., and Zhang, C., editors, {\em Adv. Neural Inf. Process. Syst.},
  volume~37, pages 38199--38234. Curran Associates, Inc.

\bibitem[Massart, 2007]{Massart2007}
Massart, P. (2007).
\newblock {\em Concentration Inequalities and Model Selection}.
\newblock Springer, Berlin.

\bibitem[O'Donnell, 2014]{ODonnell2014}
O'Donnell, R. (2014).
\newblock {\em Analysis of {B}oolean Functions}.
\newblock Cambridge University Press, Cambridge.

\bibitem[Oh et~al., 2019]{OhTomczakGavvesWelling2019}
Oh, C., Tomczak, J.~M., Gavves, E., and Welling, M. (2019).
\newblock Combinatorial {B}ayesian optimization using the graph {C}artesian
  product.
\newblock In Wallach, H., Larochelle, H., Beygelzimer, A., {d'Alch{\'e}-Buc},
  F., Fox, E., and Garnett, R., editors, {\em Adv. Neural Inf. Process. Syst.},
  volume~32, pages 2914--2924. Curran Associates, Inc.

\bibitem[Puy et~al., 2018]{PuyTremblayGribonvalVandergheynst2018}
Puy, G., Tremblay, N., Gribonval, R., and Vandergheynst, P. (2018).
\newblock Random sampling of bandlimited signals on graphs.
\newblock {\em Appl. Comput. Harmon. Anal.}, 44:446--475.

\bibitem[Qian et~al., 2017]{QianEtAl2017}
Qian, C., Shi, J.-C., Yu, Y., Tang, K., and Zhou, Z.-H. (2017).
\newblock Subset selection under noise.
\newblock In Guyon, I., von Luxburg, U., Bengio, S., Wallach, H., Fergus, R.,
  Vishwanathan, S., and Garnett, R., editors, {\em Adv. Neural Inf. Process.
  Syst.}, volume~30, pages 3563--3573. Curran Associates, Inc.

\bibitem[Shi et~al., 2024]{ShiBalasubramanianPolonik2024}
Shi, Z., Balasubramanian, K., and Polonik, W. (2024).
\newblock Adaptive and non-adaptive minimax rates for weighted
  {L}aplacian-eigenmap based nonparametric regression.
\newblock In Dasgupta, S., Mandt, S., and Li, Y., editors, {\em Proc.\ 27th
  AISTATS}, volume 238, pages 2800--2808. PMLR.

\bibitem[Stobbe and Krause, 2012]{StobbeKrause2012}
Stobbe, P. and Krause, A. (2012).
\newblock Learning {F}ourier sparse set functions.
\newblock In Lawrence, N.~D. and Girolami, M., editors, {\em Proc.\ 15th
  AISTATS}, volume~22, pages 1125--1133. PMLR.

\bibitem[Tajdini et~al., 2024]{TajdiniJainJamieson2024}
Tajdini, A., Jain, L., and Jamieson, K. (2024).
\newblock Nearly minimax optimal submodular maximization with bandit feedback.
\newblock In Globerson, A., Mackey, L., Belgrave, D., Fan, A., Paquet, U.,
  Tomczak, J., and Zhang, C., editors, {\em Adv. Neural Inf. Process. Syst.},
  volume~37, pages 96254--96281. Curran Associates, Inc.

\bibitem[Tropp, 2012]{Tropp2012}
Tropp, J.~A. (2012).
\newblock User-friendly tail bounds for sums of random matrices.
\newblock {\em Found. Comput. Math.}, 12:389--434.

\bibitem[Tsybakov, 2009]{Tsybakov2009}
Tsybakov, A.~B. (2009).
\newblock {\em Introduction to Nonparametric Estimation}.
\newblock Springer, New York.

\bibitem[Wagstaff et~al., 2019]{WagstaffEtAl2019}
Wagstaff, E., Fuchs, F., Engelcke, M., Posner, I., and Osborne, M.~A. (2019).
\newblock On the limitations of representing functions on sets.
\newblock In Chaudhuri, K. and Salakhutdinov, R., editors, {\em Proc.\ 36th
  ICML}, volume~97, pages 6487--6494. PMLR.

\bibitem[Zaheer et~al., 2017]{ZaheerEtAl2017}
Zaheer, M., Kottur, S., Ravanbakhsh, S., P{\'o}czos, B., Salakhutdinov, R., and
  Smola, A.~J. (2017).
\newblock Deep sets.
\newblock In Guyon, I., von Luxburg, U., Bengio, S., Wallach, H., Fergus, R.,
  Vishwanathan, S., and Garnett, R., editors, {\em Adv. Neural Inf. Process.
  Syst.}, volume~30, pages 3394--3404. Curran Associates, Inc.

\end{thebibliography}

\clearpage
\appendix

\section*{Supplementary Material}

This supplementary material contains the numerical experiments, the
canonical frame construction, and proofs of all results stated in the
main article.

\section{Numerical experiments}\label{sec:experiments}

We use \((d,k)=(12,4)\), for which \(N=495\), and independent
Gaussian noise with standard deviation \(0.5\).  Prediction error is
measured by the squared \(L^2(\nu_{12,4})\)-distance and is evaluated
exactly over all 495 subsets.

For each spectral profile, we generate 120 independent centered target
functions
\(
  f^{(1)},\ldots,f^{(120)}
\)
directly from the Johnson decomposition and normalize them so that
\(\|f^{(q)}\|_2=1\).  Computationally, an \(L^2(\nu_{12,4})\)-orthonormal basis of each
\(V_r\) is obtained by projecting the order-\(r\) inclusion
indicators onto \(U_{r-1}^{\perp}\) and applying a QR
orthonormalization.  Within each level, the coefficient vector is
drawn from a standard Gaussian distribution and normalized to unit
Euclidean norm before being rescaled to the prescribed squared
\(L^2\)-mass.  The resulting direction is uniform on the unit sphere
of \(V_r\), so its distribution does not depend on the particular
orthonormal basis produced by the QR decomposition.

The \emph{pairwise} profile allocates 20\% of
the squared \(L^2\)-norm to \(V_1\) and 80\% to \(V_2\).  The
\emph{diffuse} profile allocates 50\%, 30\%, 15\%, and 5\% to
\(V_1,V_2,V_3,\), and \(V_4\), respectively.

Each target \(f^{(q)}\) is evaluated at every sample size
\(
  n\in\{40,80,160,320,640\}.
\)
For each pair \((f^{(q)},n)\), we independently draw a new design and
a new noise sample.  Thus, the target functions are shared across
sample sizes, whereas the design and noise are always resampled.  At each fixed sample size, 
the reported Monte Carlo mean is computed
from 120 losses, each corresponding to a different target and an
independently generated design and noise sample.

The numerical study has two parts.  We first compare predictive
performance under the pairwise and diffuse target profiles.  We then use 
separate diagnostics to examine the random-design
phenomena identified by the theory: the signal-dependent variation of
empirical projection, the stability of least squares, and the loss of
information caused by incomplete coverage.

The prediction comparison involves five procedures.  The first is empirical harmonic projection, implemented as
\(\overline Y+\widehat f_D^{\,\mathrm{proj}}\): the sample mean
estimates the constant component, while
\(\widehat f_D^{\,\mathrm{proj}}\) estimates the nonconstant levels
through \(D\). Although the simulated targets are centered, the constant component
is included to reflect the practical case of an unknown intercept. The
second is Johnson spectral shrinkage, which leaves the constant level
unchanged and multiplies the empirical component in \(V_r\),
\(r\geqslant1\), by
\[
  (1+\lambda\gamma_{r,d})^{-1}.
\]
The third is a Nadaraya--Watson estimator based on the Johnson
distance, which we call the Johnson kernel estimator.  Its
weights are proportional to
\[
  \exp\left\{
    -\operatorname{dist}_J(S,S')/h
  \right\}.
\]
The remaining two procedures are additive least squares over
\(V_0\mathbin{\oplus^\perp}V_1=U_1\) and the sample mean, which serve
as low-dimensional baselines.

The cutoff \(D\), shrinkage parameter \(\lambda\), and kernel
bandwidth \(h\) are selected using the same randomly chosen validation
quarter within each repetition.  The candidate sets are
\[
  D\in\{0,\ldots,4\},\qquad
  \lambda\in
  \left\{
    10^{-2.5+4.5j/15}:j=0,\ldots,15
  \right\},
\]
and
\[
  h\in\{0.20,0.35,0.55,0.85,1.30,2.00\}.
\]
After selection, the corresponding
estimator is refitted on the full sample. Final performance is evaluated against the known target function,
using the exact \(L^2(\nu_{12,4})\)-error over the complete domain
rather than the validation loss. Additive least squares and
the sample mean have no tuning parameter and are computed directly
from the full sample. This validation scheme is used only for the numerical comparison and
is not analyzed theoretically in the article. Stable least squares is not included in this first comparison because the Gram-stability event need not hold at these sample sizes; it is studied separately in the third panel in a regime where the
empirical Gram matrix is stable. Centered completion is likewise not used as a general prediction competitor: it is a full-level estimator designed specifically to address incomplete coverage, whose extent is displayed in the final panel.

Figure~\ref{fig:compact-experiments} summarizes the two prediction
experiments and the three random-design diagnostics described below.

\begin{figure}[!ht]
  \centering
  \includegraphics[width=\textwidth]{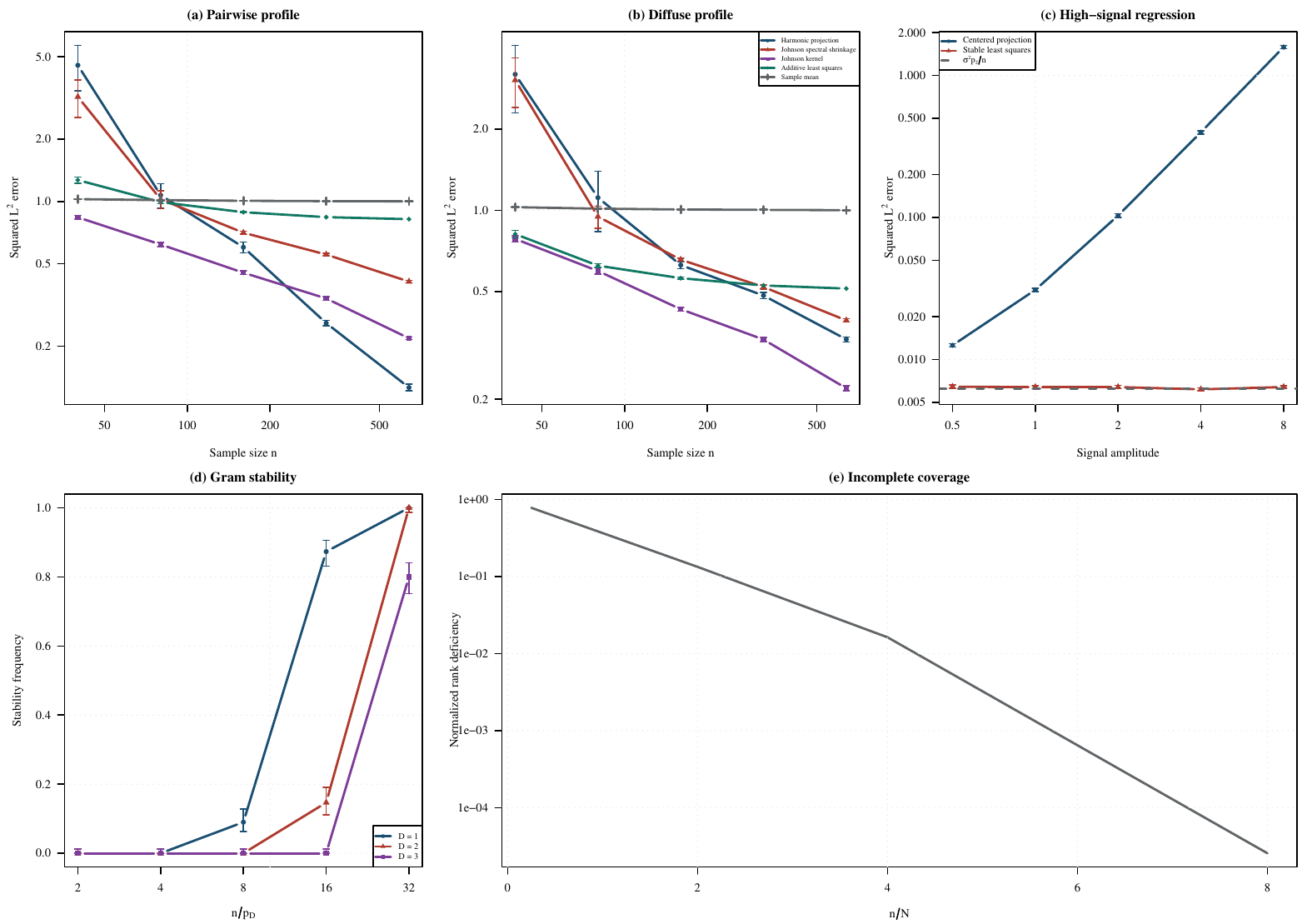}
  \caption{\textbf{Prediction and random-design mechanisms on
  \(\mathcal S_{12,4}\).}
 The first two panels show squared \(L^2(\nu_{12,4})\)-error for the
pairwise and diffuse profiles. Points are Monte Carlo means over 120 joint target-and-data
realizations.  Where visible,
error bars are normal-approximation 95\% confidence intervals for
these means, computed as the mean plus or minus 1.96 Monte Carlo
standard errors. The third panel compares centered empirical projection and stable least squares at \(D=2\) and \(n=2600\) as a fixed signal is rescaled; the dashed
line is \(\sigma^2p_2/n\), and results are based on 200 independent
repetitions at each amplitude. The fourth panel reports the frequency of
\(\|\widehat G_D-I_{p_D}\|_{\mathrm{op}}\leqslant1/2\) for
\(D=1,2,3\), based on 300 independent Gram matrices at each
configuration; bars are 95\% Wilson confidence intervals for the
corresponding stability probabilities. The fifth panel plots the exact normalized full-level rank deficiency
\(\rho_{m,n}\) from
\eqref{eq:full-rank-deficiency-explicit} as a function of \(n/N\).}
  \label{fig:compact-experiments}
\end{figure}

The pairwise experiment shows when the larger interaction space
becomes estimable.  At \(n=320\), harmonic projection has mean error
0.258, compared with 0.340 for the Johnson kernel, 0.554 for spectral
shrinkage, 0.840 for additive least squares, and 1.004 for the sample
mean.  At \(n=640\), the corresponding errors are 0.126, 0.218,
0.410, 0.821, and 1.001. Since \(p_2=65\), fitting the second harmonic level is costly at the
smallest sample sizes. The validation-selected projection estimator
is then highly variable and occasionally selects an excessively large
cutoff.  At \(n=320\) and \(n=640\), however, the validation rule
selects \(D=2\) in every repetition, and estimating the pairwise
component becomes beneficial.

The diffuse profile favors gradual regularization.  The Johnson kernel
has the smallest mean error throughout the displayed range, reaching
0.220 at \(n=640\), compared with 0.333 for validation-selected hard
projection. This is consistent with the allocation of squared \(L^2\)-norm across
several harmonic levels. Hard projection must select a single cutoff: it retains all components
below that cutoff without shrinkage and discards all components above
it.  By contrast, the Johnson kernel averages observations across
nearby subsets without imposing a hard interaction cutoff. The
comparison is therefore mechanistic, not a claim that one estimator
dominates uniformly.

The third panel compares the centered empirical projection
\(\widehat f_2^{\,\mathrm{proj}}\) and stable least squares over
\(W_2\) as the signal strength increases
relative to the observation noise. Unlike in the preceding prediction experiments, we fix once and for
all one of the pairwise targets described above, denoted by
\(f_0\in W_2\), and consider
\(
  f_t=tf_0\),  \(t\in\{0.5,1,2,4,8\}.
\)
Since \(\|f_0\|_2=1\), the parameter \(t\) is the \(L^2\)-amplitude
of the signal.  For any fixed \(s>0\), the smallest
Johnson--Sobolev budget for which
\(f_t\in\mathcal E^s_{d,k}(B)\) is
\[
  B_t
  =
  I_{J,d,k}^{(2,s)}(f_t)
  =
  t^2I_{J,d,k}^{(2,s)}(f_0).
\]
Thus, with the noise variance held fixed, increasing \(t\) increases
the ratio \(B_t/\sigma^2\).

We take \(D=2\) and \(n=2600=40p_2\). Since \(f_t\in W_2\), no harmonic component is omitted and neither
estimator incurs approximation error. For each value of
\(t\), we generate 200 independent design and noise samples;
the target direction \(f_0\) itself remains unchanged.  Each design
produces a new empirical Gram matrix \(\widehat G_2\), and we check
whether
\[
  \mathcal A_2
  =
  \left\{
    \|\widehat G_2-I_{p_2}\|_{\mathrm{op}}
    \leqslant\frac12
  \right\}
\]
occurs.  This stability event occurs in all 200 repetitions at every
displayed value of \(t\).  Consequently, stable least squares coincides with ordinary least
squares over \(W_2\) in every repetition. On this event, empirical and
population squared norms are comparable throughout \(W_2\), so the
least-squares fit is well conditioned.

As \(t\) increases from \(0.5\) to \(8\), the least-squares
risk remains between \(0.00617\) and \(0.00644\), close to the
observation-noise benchmark
\[
  \frac{\sigma^2p_2}{n}=0.00625.
\]
By contrast, the projection risk increases from \(0.0126\) to
\(1.582\).  This illustrates the distinction established in
Theorem~\ref{thm:stable-ls}: once the Gram matrix is stable, least
squares removes the random-design fluctuation of the component already
contained in \(W_2\), whereas empirical projection continues to incur
a variance that increases with the squared amplitude of the signal.

The fourth panel examines the stability event itself.  For each
\(D\in\{1,2,3\}\) and each displayed value of \(n/p_D\), we generate
300 independent designs and estimate
\[
  \P_{\nu_{d,k}^{\otimes n}}(\mathcal A_D),
  \qquad
  \mathcal A_D
  =
  \left\{
    \|\widehat G_D-I_{p_D}\|_{\mathrm{op}}
    \leqslant\frac12
  \right\}.
\]
At \(n/p_D=16\), the empirical frequencies are \(0.873\), \(0.147\),
and \(0\) for \(D=1,2,3\), respectively.  At \(n/p_D=32\), they are
\(1\), \(1\), and \(0.80\).  The transition therefore depends not
only on the number of observations per fitted coefficient but also on
the dimension \(p_D\).  This is consistent with
Proposition~\ref{prop:johnson-gram}, whose sufficient stability
condition is of order \(p_D\log p_D\).

The final panel illustrates the incomplete-coverage mechanism using
the exact expression \eqref{eq:full-rank-deficiency-explicit}.  The
normalized full-level rank deficiency decreases from \(0.366\) at
\(n=N\) to \(0.133\) at \(n=2N\), \(0.0163\) at \(n=4N\), and
\(2.56\times10^{-5}\) at \(n=8N\).  These values quantify the normalized expected dimension of the entire
unresolved centered subspace.

All experiments concern the same fixed-cardinality domain
\(\mathcal S_{12,4}\).  The prediction experiments use Gaussian noise
and targets generated from the harmonic decomposition underlying the
theory, whereas the Gram experiment and the exact rank-deficiency
illustration concern the random design alone. These experiments illustrate the mechanisms
identified by the theory rather than establish broad empirical
superiority.  

\FloatBarrier

\section{Johnson harmonic analysis and the canonical frame}
\label{supp:harmonic}

\subsection{Proof of Proposition~\ref{prop:johnson-spectrum}}
\label{proof:prop:johnson-spectrum}

At level \(r=0\), the space \(V_0\) consists of the constant
functions.  Hence, \(K\one=\one\), which proves
\eqref{eq:johnson-eigenvalue} because \(\theta_{0,d,k}=1\).

Fix \(1\leqslant r\leqslant m\), let \(T\subseteq[d]\) have
cardinality \(r\), and recall that
\[
  h_T(S)=\one_{\{T\subseteq S\}}.
\]
We first compute \((Kh_T)(S)\) by counting the \(k(d-k)\) possible
ordered swaps from a fixed \(S\in\Sdk\).

If \(T\subseteq S\), the inclusion remains valid after the swap
exactly when the outgoing item lies in \(S\setminus T\).  There are
\(k-r\) possible outgoing items and \(d-k\) possible incoming items,
giving \((k-r)(d-k)\) successful swaps.  If
\(|T\setminus S|=1\), the incoming item must be the unique missing
element of \(T\), while the outgoing item can be any of the
\(k-r+1\) elements of \(S\setminus T\).  If at least two elements of
\(T\) are missing, one exchange cannot produce a subset containing
\(T\).  Dividing these counts by \(k(d-k)\) gives
\begin{equation}\label{supp:eq:KhT-first}
  (Kh_T)(S)
  =
  \left(1-\frac rk\right)h_T(S)
  +
  \frac{k-r+1}{k(d-k)}
  \one_{\{|T\setminus S|=1\}}.
\end{equation}

The remaining indicator satisfies
\begin{equation}\label{supp:eq:missing-one}
  \one_{\{|T\setminus S|=1\}}
  =
  \sum_{\substack{R\subset T\\|R|=r-1}}h_R(S)
  -
  rh_T(S).
\end{equation}
Indeed, the sum counts the \((r-1)\)-subsets of \(T\) that are
contained in \(S\).  If \(T\subseteq S\), all \(r\) such subsets are
counted.  If exactly one element of \(T\) is missing from \(S\), only
the subset obtained by removing that element is counted.  If at least
two elements are missing, deleting a single element from \(T\) cannot
produce a subset contained in \(S\).  The sum is therefore equal to
\(r\), \(1\), or \(0\), respectively, and subtracting \(rh_T(S)\)
proves \eqref{supp:eq:missing-one}.

Substituting \eqref{supp:eq:missing-one} into
\eqref{supp:eq:KhT-first} gives the functional identity
\[
  (K-\theta_{r,d,k}I)h_T
  =
  \frac{k-r+1}{k(d-k)}
  \sum_{\substack{R\subset T\\|R|=r-1}}h_R
  \in U_{r-1},
\]
because
\[
  1-\frac rk
  -\frac{r(k-r+1)}{k(d-k)}
  =
  1-\frac{r(d-r+1)}{k(d-k)}
  =
  \theta_{r,d,k}.
\]
Since the functions \(h_T\), with \(|T|=r\), span \(U_r\), it follows
that
\begin{equation}\label{supp:eq:triangular-space}
  (K-\theta_{r,d,k}I)U_r
  \subseteq U_{r-1}.
\end{equation}

We also need the invariance of \(U_{r-1}\) under \(K\).  If
\(r\geqslant2\), applying the preceding calculation at order \(r-1\)
gives
\[
  (K-\theta_{r-1,d,k}I)h_C\in U_{r-2}
  \qquad
  \text{for every }C\subseteq[d]\text{ with }|C|=r-1.
\]
Since \(h_C\in U_{r-1}\) and \(U_{r-2}\subseteq U_{r-1}\), this
implies \(Kh_C\in U_{r-1}\).  These indicators span \(U_{r-1}\), so
\(K\) preserves \(U_{r-1}\).  When \(r=1\), the same conclusion
follows from \(U_0=\operatorname{span}\{\one\}\) and
\(K\one=\one\).

Because \(K\) is self-adjoint, the invariance of \(U_{r-1}\) implies
that \(U_{r-1}^{\perp}\) is also invariant.  Indeed, if
\(f\perp U_{r-1}\) and \(g\in U_{r-1}\), then
\[
  \inner{Kf}{g}
  =
  \inner{f}{Kg}
  =
  0,
\]
because \(Kg\in U_{r-1}\).

Now let \(f\in V_r=U_r\cap U_{r-1}^{\perp}\).  Relation
\eqref{supp:eq:triangular-space} gives
\[
  (K-\theta_{r,d,k}I)f\in U_{r-1}.
\]
On the other hand, both \(Kf\) and \(f\) belong to
\(U_{r-1}^{\perp}\), so the same difference belongs to
\(U_{r-1}^{\perp}\).  Since
\(U_{r-1}\cap U_{r-1}^{\perp}=\{0\}\), we conclude that
\[
  (K-\theta_{r,d,k}I)f=0,
\]
which proves \eqref{eq:johnson-eigenvalue}.

It remains to verify the Dirichlet identity.  Suppose that
\(S_0\sim\nu_{d,k}\) and, conditionally on \(S_0=S\), let \(S_1\)
have transition probabilities \(q(S,\cdot)\).  We first check that
\(S_1\) is also uniform on \(\Sdk\).  For every \(S'\in\Sdk\),
detailed balance gives
\[
\begin{aligned}
  \mathbb P(S_1=S')
  &=
  \sum_{S\in\Sdk}\nu_{d,k}(S)q(S,S')\\
  &=
  \sum_{S\in\Sdk}\nu_{d,k}(S')q(S',S)\\
  &=
  \nu_{d,k}(S'),
\end{aligned}
\]
because \(\sum_{S\in\Sdk}q(S',S)=1\).  Thus,
\(\nu_{d,k}\) is stationary for the one-exchange walk.

For a fixed \(S\in\Sdk\), the conditional distribution of \(S_1\)
gives
\[
  \E\!\left[f(S_1)^2\mid S_0=S\right]
  =
  \frac1{k(d-k)}
  \sum_{a\in S}\sum_{b\notin S}f(S^{a\to b})^2.
\]
Averaging this identity with respect to
\(S_0\sim\nu_{d,k}\), and using \(S_1\sim\nu_{d,k}\), gives
\begin{equation}\label{supp:eq:reverse-square}
  \frac1{k(d-k)}
  \E_{S\sim\nu_{d,k}}
  \left[
    \sum_{a\in S}\sum_{b\notin S}f(S^{a\to b})^2
  \right]
  =
  \E_{S\sim\nu_{d,k}}[f(S)^2]
  =
  \norm{f}_2^2.
\end{equation}

Similarly, conditioning on \(S_0\) gives
\begin{align*}
\frac1{k(d-k)}
  \E_{S\sim\nu_{d,k}}
  \left[
    \sum_{a\in S}\sum_{b\notin S}
    f(S)f(S^{a\to b})
  \right]
&=
  \E_{S\sim\nu_{d,k}}
  \left[
    f(S)\,
    \E\!\left[f(S_1)\mid S_0=S\right]
  \right]
\\
&=
  \E_{S\sim\nu_{d,k}}[f(S)Kf(S)]
  =
  \inner{f}{Kf}.
\end{align*}
Expanding the squared differences, using
\eqref{supp:eq:reverse-square} for the second quadratic term and the
preceding identity for the cross term, yields
\begin{align*}
 \frac1{2k(d-k)}
  \E_{S\sim\nu_{d,k}}
  \left[
    \sum_{a\in S}\sum_{b\notin S}
    \{f(S)-f(S^{a\to b})\}^2
  \right]
 &=
  \E_{S\sim\nu_{d,k}}[f(S)^2]
  -
  \E_{S\sim\nu_{d,k}}[f(S)Kf(S)]
\\
 &=
  \inner{f}{(I-K)f},
\end{align*}
which proves \eqref{eq:johnson-dirichlet-form}.

\subsection{A canonical equivariant frame}\label{ssec:canonical-frame}

The decomposition into the spaces \(V_r\) is canonical, whereas an
orthonormal basis within a given level is not. Recall that, in a finite-dimensional Hilbert space \(\mathcal H\), a frame is a finite spanning family
\(\{\psi_a:a\in\mathcal I\}\). Its frame operator is
\[
  \mathcal Fg
  =
  \sum_{a\in\mathcal I}
  \langle g,\psi_a\rangle\psi_a,
  \qquad g\in\mathcal H.
\]
The frame is tight if there exists \(C>0\) such that
\(
  \mathcal F=CI_{\mathcal H}.
\)
In that case,
\[
  g
  =
  C^{-1}
  \sum_{a\in\mathcal I}
  \langle g,\psi_a\rangle\psi_a.
\]
Taking the inner product of the reconstruction identity with \(g\)
gives
\[
  \|g\|_{\mathcal H}^2
  =
  C^{-1}
  \sum_{a\in\mathcal I}
  \langle g,\psi_a\rangle_{\mathcal H}^2,
\]
and therefore
\[
  \sum_{a\in\mathcal I}
  \langle g,\psi_a\rangle_{\mathcal H}^2
  =
  C\|g\|_{\mathcal H}^2.
\]
Thus, a tight frame may be redundant while retaining reconstruction
and energy identities analogous to those of an orthonormal basis.  We
now construct such a frame in a way that respects the permutation
symmetry of the fixed-cardinality domain.

Fix \(1\leqslant r\leqslant m\) and
\(T\subseteq[d]\) with \(|T|=r\).  For \(S\in\Sdk\), let
\[
  L_T(S)=|S\cap T|
\]
be the number of items shared by \(S\) and \(T\).  If
\(S\sim\nu_{d,k}\), then \(L_T(S)\) has the hypergeometric
distribution
\begin{equation}\label{eq:hypergeometric-weight}
  \P_{S\sim\nu_{d,k}}\{L_T(S)=\ell\}
  =
  w_{r,d,k}(\ell)
  :=
  \frac{\binom r\ell\binom{d-r}{k-\ell}}{\binom dk},
  \qquad
  0\leqslant\ell\leqslant r.
\end{equation}
Indeed, among the \(\binom dk\) possible \(k\)-subsets, those having
overlap \(\ell\) with \(T\) are obtained by choosing \(\ell\) items
from \(T\) and \(k-\ell\) items from its complement.  Moreover,
\(r\leqslant m=\min(k,d-k)\) ensures that every overlap
\(\ell\in\{0,\ldots,r\}\) is attainable.  Hence,
\[
  w_{r,d,k}(\ell)>0,
  \qquad 0\leqslant\ell\leqslant r.
\]

Our aim is to transform the overlap \(L_T(S)=|S\cap T|\) into a
normalized function that isolates the pure harmonic level \(V_r\).
The natural construction uses orthogonal polynomials for the
hypergeometric distribution of \(L_T(S)\).

Let \(\mathcal P_r\) be the space of polynomials of degree at most
\(r\).  On this space, define
\[
  \langle p,q\rangle_{w}
  =
  \sum_{\ell=0}^r
  w_{r,d,k}(\ell)p(\ell)q(\ell),
  \qquad
  p,q\in\mathcal P_r.
\]
This is an inner product because all the weights are positive and no
nonzero polynomial in \(\mathcal P_r\) can vanish at all \(r+1\)
distinct points \(0,\ldots,r\). Within the \((r+1)\)-dimensional space \(\mathcal P_r\), the subspace
\(\mathcal P_{r-1}\) has dimension \(r\) and hence codimension one.
Its orthogonal complement is therefore one-dimensional.  Every
nonzero element of this complement has degree exactly \(r\), and any
two such elements are proportional.

Subsection~\ref{supp:explicit} constructs an explicit nonzero generator
of this one-dimensional space from the coefficients of the \(r\)-th
finite-difference operator.  It also shows that the value of this
generator at \(r\) is nonzero.  After normalizing with respect to
\(\langle\cdot,\cdot\rangle_w\) and choosing its sign, we obtain the
unique polynomial \(H_{r,d,k}\) characterized by
\[
  \langle H_{r,d,k},p\rangle_w=0
  \quad\text{for every }p\in\mathcal P_{r-1},
  \qquad
  \langle H_{r,d,k},H_{r,d,k}\rangle_w=1,
  \qquad
  H_{r,d,k}(r)>0.
\]
We then lift this polynomial from the overlap variable to the full
domain by defining
\begin{equation}\label{eq:canonical-frame}
  \phi_{r,T}(S)
  =
  H_{r,d,k}(|S\cap T|),
  \qquad
  S\in\Sdk.
\end{equation}
At level \(r=0\), set \(\phi_{0,\emptyset}\equiv1\).

The proof of Theorem~\ref{thm:canonical-frame}, given in
Subsection~\ref{supp:proof-canonical-frame}, also establishes that
\[
  \phi_{r,T}
  =
  \frac{P_rh_T}{\norm{P_rh_T}_2}.
\]
Thus, \(\phi_{r,T}\) is precisely the normalized order-\(r\) harmonic
component of the inclusion indicator \(h_T\).

\begin{theorem}[Canonical tight frame]\label{thm:canonical-frame}
For every \(0\leqslant r\leqslant m\), the family
\[
  \left\{
    \phi_{r,T}:T\subseteq[d],\ |T|=r
  \right\}
\]
consists of unit-norm functions in \(V_r\) and spans \(V_r\).  With
\begin{equation}\label{eq:frame-factor}
  A_{r,d,k}
  =
  \frac{\dim(V_r)}{\binom dr}
  =
  1-\frac{r}{d-r+1},
\end{equation}
the frame satisfies, for every
\(f\in L^2(\Sdk,\nu_{d,k})\),
\begin{align}
  P_rf
  &=
  A_{r,d,k}
  \sum_{\substack{T\subseteq[d]\\|T|=r}}
  \inner{f}{\phi_{r,T}}\phi_{r,T},
  \label{eq:frame-reconstruction}\\
  \norm{P_rf}_2^2
  &=
  A_{r,d,k}
  \sum_{\substack{T\subseteq[d]\\|T|=r}}
  \inner{f}{\phi_{r,T}}^2.
  \label{eq:frame-parseval}
\end{align}
The construction is permutation-equivariant: for every permutation
\(\pi\) of \([d]\),
\[
  \phi_{r,\pi(T)}(\pi(S))
  =
  \phi_{r,T}(S),
  \qquad
  S\in\Sdk.
\]
\end{theorem}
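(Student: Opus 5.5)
The strategy is to first identify \(\phi_{r,T}\) with the normalized harmonic component \(P_rh_T/\norm{P_rh_T}_2\). The frame identities then follow from Schur's lemma applied to the permutation action on \(V_r\).

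\emph{Step 1 (zonal form of \(P_rh_T\)).} Let \(G_T\) be the stabilizer of \(T\) in the symmetric group on \([d]\). It acts on \(\Sdk\), and its orbits are exactly the level sets of \(L_T(S)=|S\cap T|\). Each space \(U_j\) is permutation invariant, since permutations map \(h_R\) to \(h_{\pi(R)}\). Hence each \(V_j\) and each projector \(P_j\) commute with permutations. It follows that \(P_rh_T\) is \(G_T\)-invariant, and so \(P_rh_T(S)=q(|S\cap T|)\) for some function \(q\) on \(\{0,\ldots,r\}\).

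\emph{Step 2 (showing \(q\) is a degree-\(r\) polynomial orthogonal to \(\mathcal P_{r-1}\)).} For \(j<r\), the function \(p(|S\cap T|)\) with \(\deg p\leqslant j\) lies in \(U_j\). Indeed, \(\binom{|S\cap T|}{i}=\sum_{R\subseteq T,|R|=i}h_R(S)\) belongs to \(U_i\), and the binomials \(\binom{\ell}{i}\), \(i\leqslant j\), span \(\mathcal P_j\). Conversely, \(h_T\) itself equals \(\binom{L_T}{r}\), a degree-\(r\) polynomial in the overlap. Write \(h_T=P_rh_T+(\text{component in }U_{r-1})\) and average the \(U_{r-1}\)-component over \(G_T\). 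This shows that the \(U_{r-1}\)-component is a \(G_T\)-invariant element of \(U_{r-1}\). Such an element should be a polynomial of degree \(\leqslant r-1\) in \(L_T\); to see this, average \(h_R\) over \(G_T\), which gives a function of \(|R\cap T|\)-type counts and hence a polynomial in \(L_T\) of degree \(\leqslant |R|\). Therefore \(q\in\mathcal P_r\). Moreover, \(q\perp_w\mathcal P_{r-1}\), because \(P_rh_T\perp U_{r-1}\) and integrating a function of \(L_T\) against \(\nu_{d,k}\) is integration against \(w_{r,d,k}\). Finally, \(q\neq0\): its leading coefficient is that of \(\binom{\ell}{r}\), since we subtracted only lower-degree terms. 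The one-dimensionality stated in the text then gives \(q=cH_{r,d,k}\). The sign is fixed by \(\langle q,H\rangle_w=\langle P_rh_T,\phi_{r,T}\rangle=\langle h_T,\phi_{r,T}\rangle=w(r)H(r)>0\). This yields \(\phi_{r,T}=P_rh_T/\norm{P_rh_T}_2\), which is a unit vector in \(V_r\). Equivariance is immediate, since \(|\pi(S)\cap\pi(T)|=|S\cap T|\).

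\emph{Step 3 (spanning).} The \(P_rh_T\) span \(P_rU_r=V_r\), because \(U_r=U_{r-1}\oplus V_r\) and the \(h_T\) span \(U_r\).

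\emph{Step 4 (tightness).} The frame operator \(\mathcal F=\sum_T\inner{\cdot}{\phi_{r,T}}\phi_{r,T}\) is self-adjoint on \(V_r\) and commutes with all permutations, by the equivariance just proved. The main obstacle is to argue that \(V_r\) is irreducible under the symmetric group. I would quote the classical fact that \(V_r\) is the Specht module \(S^{(d-r,r)}\) (Delsarte; Bannai--Ito), which is consistent with its dimension \(\binom dr-\binom d{r-1}\). Alternatively, I would use that the permutation module on \(\Sdk\) is multiplicity-free for \(k\leqslant d/2\), so that its isotypic pieces are irreducible and, by Proposition~\ref{prop:johnson-spectrum}, coincide with the eigenspaces \(V_r\). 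Schur's lemma then gives \(\mathcal F=C\,I_{V_r}\). Taking traces, \(C\dim V_r=\sum_T\norm{\phi_{r,T}}_2^2=\binom dr\), so \(C^{-1}=A_{r,d,k}\). The formula \(\dim V_r/\binom dr=1-r/(d-r+1)\) follows from \(\binom d{r-1}/\binom dr=r/(d-r+1)\). Applying the identity to \(P_rf\) and using \(\inner{P_rf}{\phi_{r,T}}=\inner{f}{\phi_{r,T}}\) gives \eqref{eq:frame-reconstruction}, and pairing with \(f\) gives \eqref{eq:frame-parseval}. The case \(r=0\) is trivial, since then \(A_{0,d,k}=1\) and there is a single constant function.
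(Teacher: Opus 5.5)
Your argument is correct, but it orders the steps differently from the paper and uses a different key input for tightness.

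The paper first proves \(\phi_{r,T}\in V_r\) directly. Membership in \(U_r\) comes from the binomial expansion of \(H_{r,d,k}\). Orthogonality to each \(h_R\) with \(|R|\leqslant r-1\) holds because \(\E[h_R(S)\mid L_T(S)=\ell]\) is a polynomial of degree at most \(|R|\) in \(\ell\). Only at the end does the paper derive \(\phi_{r,T}=P_rh_T/\norm{P_rh_T}_2\), by the same \(G_T\)-invariance argument as your Steps 1--2. You reverse the order and obtain membership in \(V_r\) from the projection identity. This makes spanning immediate via \(P_rU_r=V_r\), whereas the paper deduces it from the positive frame constant.

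For tightness, the paper avoids representation theory of the symmetric group. The frame kernel \(\mathcal Q_r(S,S')\) is invariant under the diagonal permutation action, whose orbits on pairs are indexed by \(|S\cap S'|\). Hence the frame operator is a combination of the distance matrices \(A_\ell\). By distance-regularity each \(A_\ell\) is a polynomial in \(A_1=k(d-k)K\), and so acts as a scalar on \(V_r\) by Proposition~\ref{prop:johnson-spectrum}.

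Your Schur-lemma route is shorter and more conceptual, and self-adjointness correctly handles Schur's lemma over \(\mathbb R\). However, it imports the irreducibility of \(V_r\) as an external fact. In the multiplicity-free version you also need that there are exactly \(m+1\) irreducible constituents, so that each eigenspace \(V_r\) contains only one. The paper needs only elementary two-point homogeneity, distance-regularity, and the Johnson spectrum it has already proved.

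One simplification is available in your Step 2: you do not need the lemma on \(G_T\)-invariant elements of \(U_{r-1}\). Any function on \(\{0,\ldots,r\}\) is already a polynomial of degree at most \(r\). Also, \(q\neq0\) follows because otherwise, by symmetry, every \(h_T\) with \(|T|=r\) would lie in \(U_{r-1}\), contradicting \(\dim U_r>\dim U_{r-1}\).
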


The frame uses \(\binom dr\) vectors to represent the space \(V_r\),
whose dimension is
\(\binom dr-\binom d{r-1}\), with the convention
\(\binom d{-1}=0\). Its exact redundancy, defined as the
number of frame vectors divided by the dimension of their span, is
therefore \(A_{r,d,k}^{-1}\).  This representation will be used to
write the projection estimator in an explicit form that is invariant
under relabeling of the ground set.  The statistical risk bounds
depend only on the harmonic spaces \(V_r\), not on a particular frame
or orthonormal basis.

\subsection{Overlap polynomial}
\label{supp:explicit}

The case \(r=0\), for which
\(\phi_{0,\emptyset}\equiv 1\), is immediate and does not require
an overlap polynomial.  We therefore fix
\(1\leqslant r\leqslant m\). All the hypergeometric weights
\(w_{r,d,k}(\ell)\), \(0\leqslant\ell\leqslant r\), are strictly
positive: the inequalities \(r\leqslant k\) and
\(r\leqslant d-k\) ensure that every overlap
\(\ell\in\{0,\ldots,r\}\) is attainable.

We first recall the finite-difference identity underlying the
construction.  For a function \(p\) on the integers, define its
forward difference by
\[
  (\Delta p)(x)=p(x+1)-p(x).
\]
Iterating this operator \(r\) times gives
\begin{equation}\label{supp:eq:forward-difference}
  (\Delta^rp)(0)
  =
  \sum_{\ell=0}^r
  (-1)^{r-\ell}\binom r\ell p(\ell).
\end{equation}
If \(p\) is a polynomial of degree at most \(r-1\), then each
application of \(\Delta\) lowers its degree by one, and hence
\[
  \Delta^rp\equiv0.
\]
The coefficients used below are the normalized coefficients in
\eqref{supp:eq:forward-difference}.  Define
\begin{equation}\label{supp:eq:lambda-Z}
  \lambda_{r,\ell}
  =
  \frac{(-1)^{r-\ell}}{\ell!(r-\ell)!}
  =
  \frac1{r!}
  (-1)^{r-\ell}\binom r\ell,
  \qquad
  Z_{r,d,k}
  =
  \sum_{\ell=0}^r
  \frac{\lambda_{r,\ell}^2}{w_{r,d,k}(\ell)}.
\end{equation}
Since all the weights are positive,
\(0<Z_{r,d,k}<\infty\).

We now prescribe the values
\begin{equation}\label{supp:eq:H-values}
  H_{r,d,k}(\ell)
  :=
  \frac{\lambda_{r,\ell}}
       {w_{r,d,k}(\ell)\sqrt{Z_{r,d,k}}},
  \qquad
  \ell=0,\ldots,r.
\end{equation}
Because the interpolation nodes \(0,\ldots,r\) are distinct, the
Lagrange interpolation theorem yields a unique polynomial of degree
at most \(r\) taking the values in
\eqref{supp:eq:H-values}.  We use the notation \(H_{r,d,k}\) both
for this polynomial and for its restriction to the interpolation
grid.

\begin{lemma}[Top hypergeometric polynomial]
\label{supp:lem:top-polynomial}
For every \(1\leqslant r\leqslant m\), the interpolating polynomial
\(H_{r,d,k}\) has degree exactly \(r\) and satisfies
\[
  \sum_{\ell=0}^r
  w_{r,d,k}(\ell)H_{r,d,k}(\ell)p(\ell)
  =
  0
\]
for every polynomial \(p\) of degree at most \(r-1\).  Moreover,
\[
  \sum_{\ell=0}^r
  w_{r,d,k}(\ell)H_{r,d,k}(\ell)^2
  =
  1,
  \qquad
  H_{r,d,k}(r)>0.
\]
\end{lemma}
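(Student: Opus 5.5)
The plan is to verify each property directly from the prescribed values \eqref{supp:eq:H-values}, using the finite-difference identity \eqref{supp:eq:forward-difference}. The key observation is that the weight cancels in the products:
\[
  w_{r,d,k}(\ell)H_{r,d,k}(\ell)=\frac{\lambda_{r,\ell}}{\sqrt{Z_{r,d,k}}}.
\]
Hence, for any polynomial \(p\),
\[
  \sum_{\ell=0}^r w_{r,d,k}(\ell)H_{r,d,k}(\ell)p(\ell)
  =\frac{1}{r!\sqrt{Z_{r,d,k}}}\sum_{\ell=0}^r(-1)^{r-\ell}\binom r\ell p(\ell)
  =\frac{(\Delta^rp)(0)}{r!\sqrt{Z_{r,d,k}}}.
\]
If \(\deg p\leqslant r-1\), then \(\Delta^rp\equiv0\), which gives the orthogonality relation.

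For the normalization, I will substitute the prescribed values and compute
\[
  \sum_{\ell}w_{r,d,k}(\ell)H_{r,d,k}(\ell)^2
  =\sum_\ell\frac{\lambda_{r,\ell}^2}{w_{r,d,k}(\ell)Z_{r,d,k}}=1,
\]
which holds by the definition of \(Z_{r,d,k}\) in \eqref{supp:eq:lambda-Z}. The sign condition is also immediate: \(\lambda_{r,r}=1/r!>0\), the weight \(w_{r,d,k}(r)>0\) because \(r\leqslant m\), and \(Z_{r,d,k}>0\). Together these give \(H_{r,d,k}(r)>0\).

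The only step needing a small argument is that the degree is exactly \(r\). The interpolant has degree at most \(r\) by construction. Suppose instead that its degree were at most \(r-1\). I can then take \(p=H_{r,d,k}\) in the orthogonality relation just proved; this gives
\[
  \sum_\ell w_{r,d,k}(\ell)H_{r,d,k}(\ell)^2=0,
\]
contradicting the normalization, which equals \(1\). Alternatively, applying \(\Delta^r\) to a polynomial of degree at most \(r\) returns \(r!\) times its leading coefficient. The first display then shows that this leading coefficient is \(\sum_\ell \lambda_{r,\ell}^2/w_{r,d,k}(\ell)\sqrt{Z_{r,d,k}}=\sqrt{Z_{r,d,k}}>0\), up to the factor bookkeeping, and I would check that constant as a consistency verification.

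I do not expect a real obstacle here: everything reduces to the cancellation of \(w_{r,d,k}(\ell)\) and to the fact that \(\Delta^r\) annihilates \(\mathcal P_{r-1}\). The one point deserving care is that positivity of all weights is used, both to define \(Z_{r,d,k}\) and for the sign of \(H_{r,d,k}(r)\). That positivity was established earlier from \(r\leqslant\min(k,d-k)\).
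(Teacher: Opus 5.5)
Your proposal is correct and follows the paper's proof essentially step for step: the weight cancels so that orthogonality reduces to $\Delta^r p\equiv 0$, the normalization holds by the definition of $Z_{r,d,k}$, exact degree follows by contradiction with the unit norm, and positivity at $r$ follows from $\lambda_{r,r}=1/r!>0$. Your alternative leading-coefficient check is also correct: it gives leading coefficient exactly $\sqrt{Z_{r,d,k}}$, a slightly more quantitative form of the same degree argument.
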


\begin{proof}
Fix \(1\leqslant r\leqslant m\), and let \(p\) be a polynomial of
degree at most \(r-1\). By
\eqref{supp:eq:forward-difference},
\[
  \sum_{\ell=0}^r
  \lambda_{r,\ell}p(\ell)
  =
  \frac1{r!}(\Delta^rp)(0)
  =
  0.
\]
Using \eqref{supp:eq:H-values}, we obtain
\[
  \sum_{\ell=0}^r
  w_{r,d,k}(\ell)H_{r,d,k}(\ell)p(\ell)
  =
  \frac1{\sqrt{Z_{r,d,k}}}
  \sum_{\ell=0}^r\lambda_{r,\ell}p(\ell)
  =0,
\]
which proves the orthogonality assertion.  By \eqref{supp:eq:lambda-Z},
\[
  \sum_{\ell=0}^r
  w_{r,d,k}(\ell)H_{r,d,k}(\ell)^2
  =
  \frac1{Z_{r,d,k}}
  \sum_{\ell=0}^r
  \frac{\lambda_{r,\ell}^2}{w_{r,d,k}(\ell)}
  =1.
\]
If \(H_{r,d,k}\) had degree at most \(r-1\), the orthogonality
assertion could be applied with \(p=H_{r,d,k}\), giving
\[
  \sum_{\ell=0}^r
  w_{r,d,k}(\ell)H_{r,d,k}(\ell)^2=0,
\]
in contradiction with its unit norm.  Hence,
\(\deg(H_{r,d,k})=r\).  Finally,
\(\lambda_{r,r}=1/r!>0\), while
\(w_{r,d,k}(r)>0\) and \(Z_{r,d,k}>0\).  Formula
\eqref{supp:eq:H-values} therefore gives
\(H_{r,d,k}(r)>0\).
\end{proof}

The lemma shows that the interpolating polynomial constructed here
coincides with the polynomial \(H_{r,d,k}\) introduced in
Subsection~\ref{ssec:canonical-frame}.
Indeed, within the \((r+1)\)-dimensional space of polynomials of
degree at most \(r\), the orthogonal complement of the polynomials of
degree at most \(r-1\) is one-dimensional.  Unit normalization and
positivity at \(r\) select a unique representative of this
one-dimensional space.

We now lift this overlap polynomial to functions on \(\Sdk\) and
verify that the resulting functions belong to \(V_r\).

\begin{lemma}\label{supp:lem:frame-harmonicity}
For every \(1\leqslant r\leqslant m\) and every
\(T\subseteq[d]\) with \(|T|=r\), the function
\(\phi_{r,T}\) defined in \eqref{eq:canonical-frame} belongs to
\(V_r\), has unit norm, and satisfies
\[
  \phi_{r,\pi(T)}(\pi(S))
  =
  \phi_{r,T}(S),
  \qquad S\in\Sdk,
\]
for every permutation \(\pi\) of \([d]\).
\end{lemma}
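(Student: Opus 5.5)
The plan is to check the three assertions of Lemma~\ref{supp:lem:frame-harmonicity} separately. Unit norm and equivariance are immediate. Membership in \(V_r\) reduces, via Lemma~\ref{supp:lem:top-polynomial}, to a polynomial-degree count in the overlap variable.

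\emph{Unit norm and equivariance.} Since \(L_T(S)=|S\cap T|\) has the hypergeometric law \eqref{eq:hypergeometric-weight} under \(\nu_{d,k}\),
\[
\norm{\phi_{r,T}}_2^2=\E_{S\sim\nu_{d,k}}\bigl[H_{r,d,k}(L_T(S))^2\bigr]=\sum_{\ell=0}^r w_{r,d,k}(\ell)H_{r,d,k}(\ell)^2=1
\]
by Lemma~\ref{supp:lem:top-polynomial}. For equivariance, a permutation \(\pi\) is a bijection of \([d]\), so \(|\pi(S)\cap\pi(T)|=|\pi(S\cap T)|=|S\cap T|\). Applying \(H_{r,d,k}\) to both sides gives the claim.

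\emph{Membership in \(U_r\).} Write \(L_T(S)=\sum_{i\in T}X_i(S)\). The function \(H_{r,d,k}(L_T)\) is then a polynomial of degree at most \(r\) in the indicators \(X_i\), \(i\in T\). Expanding the powers \(L_T^q\) with \(q\leqslant r\) and using \(X_i^2=X_i\), every monomial becomes \(\prod_{i\in R}X_i=h_R\) for some \(R\subseteq T\) with \(|R|\leqslant r\). Each such \(h_R\) lies in \(U_{|R|}\subseteq U_r\) by the nesting of the inclusion spaces, so \(\phi_{r,T}\in U_r\).

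\emph{Orthogonality to \(U_{r-1}\).} This is the main step. Since \(U_{r-1}\) is spanned by the \(h_C\) with \(|C|=r-1\), it suffices to show \(\inner{\phi_{r,T}}{h_C}=0\) for each such \(C\). Fix \(C\) and set \(j=|C\cap T|\). Decompose
\[
\inner{\phi_{r,T}}{h_C}=\sum_{\ell=0}^r H_{r,d,k}(\ell)\,w_{r,d,k}(\ell)\,\P\bigl(C\subseteq S\mid L_T(S)=\ell\bigr).
\]
The conditioning is legitimate because all the weights are positive. Conditionally on \(L_T=\ell\), the set \(S\cap T\) is a uniform \(\ell\)-subset of \(T\) and \(S\setminus T\) is an independent uniform \((k-\ell)\)-subset of \([d]\setminus T\). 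Hence the conditional probability factors as
\[
\frac{(\ell)_j}{(r)_j}\cdot\frac{(k-\ell)_{r-1-j}}{(d-r)_{r-1-j}},
\]
with falling factorials. Here \(|C\setminus T|=r-1-j\leqslant d-r\), so the denominators are nonzero. The first factor is a polynomial in \(\ell\) of degree \(j\) and the second has degree \(r-1-j\). Their product, valid on the grid \(\{0,\ldots,r\}\), is therefore a polynomial \(p\) of degree at most \(r-1\). Lemma~\ref{supp:lem:top-polynomial} gives \(\sum_\ell w_{r,d,k}(\ell)H_{r,d,k}(\ell)p(\ell)=0\), hence \(\phi_{r,T}\perp U_{r-1}\) and \(\phi_{r,T}\in V_r\).

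The only delicate point is checking that the conditional inclusion probability really is polynomial in \(\ell\) of degree \(r-1\) uniformly over all overlap patterns \(j\). This includes the edge cases where a falling factorial vanishes, such as \(\ell<j\); these are still captured by the polynomial formula, so no separate treatment is needed.
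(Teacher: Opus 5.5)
Your proposal is correct and follows the paper's proof essentially step for step. You use the same hypergeometric computation for the norm and the same bijection argument for equivariance. Your key step is also the paper's: $\E[h_C(S)\mid L_T(S)=\ell]$ factors into two hypergeometric inclusion probabilities, is a polynomial of degree at most $r-1$ in $\ell$, and is therefore annihilated by the orthogonality in Lemma~\ref{supp:lem:top-polynomial}. The differences are cosmetic: you obtain $\phi_{r,T}\in U_r$ by expanding powers of $L_T=\sum_{i\in T}X_i$ with $X_i^2=X_i$ rather than through the binomial basis $\binom{\ell}{j}=\sum_{R\subseteq T,\,|R|=j}h_R$, and you test orthogonality only against the generators $h_C$ with $|C|=r-1$ rather than all $h_R$ with $|R|\leqslant r-1$.
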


\begin{proof}
The equivariance property follows from
\[
  |\pi(S)\cap\pi(T)|
  =
  |\pi(S\cap T)|
  =
  |S\cap T|.
\]
Indeed,
\[
  \phi_{r,\pi(T)}(\pi(S))
  =
  H_{r,d,k}(|\pi(S)\cap\pi(T)|)
  =
  H_{r,d,k}(|S\cap T|)
  =
  \phi_{r,T}(S).
\]

The norm of \(\phi_{r,T}\) is determined by the distribution of the
overlap \(L_T(S)=|S\cap T|\).  Using
\eqref{eq:hypergeometric-weight} and
Lemma~\ref{supp:lem:top-polynomial}, we obtain
\[
\begin{aligned}
  \|\phi_{r,T}\|_2^2
  &=
  \E_{S\sim\nu_{d,k}}
  \left[
    H_{r,d,k}(L_T(S))^2
  \right]
\\
  &=
  \sum_{\ell=0}^r
  w_{r,d,k}(\ell)H_{r,d,k}(\ell)^2
  =
  1.
\end{aligned}
\]

It remains to verify the two conditions defining
\(
  V_r=U_r\cap U_{r-1}^{\perp}.
\)
We first prove that \(\phi_{r,T}\in U_r\).  For
\(0\leqslant j\leqslant r\),
\begin{equation}\label{supp:eq:binomial-inclusion}
  \binom{|S\cap T|}{j}
  =
  \sum_{\substack{R\subseteq T\\|R|=j}}h_R(S).
\end{equation}
Both sides count the \(j\)-subsets of \(S\cap T\).  Moreover, the
polynomials
\[
  \ell\longmapsto\binom{\ell}{j},
  \qquad j=0,\ldots,r,
\]
form a basis of the space of polynomials of degree at most \(r\):
the \(j\)-th polynomial has degree \(j\) and leading coefficient
\(1/j!\).  Since \(H_{r,d,k}\) has degree \(r\), there exist
coefficients \(c_0,\ldots,c_r\) such that
\[
  H_{r,d,k}(\ell)
  =
  \sum_{j=0}^r c_j\binom{\ell}{j}.
\]
Evaluating this identity at \(\ell=|S\cap T|\) and applying
\eqref{supp:eq:binomial-inclusion} gives
\[
  \phi_{r,T}(S)
  =
  \sum_{j=0}^r c_j\binom{|S\cap T|}{j}
=
  \sum_{j=0}^r c_j
  \sum_{\substack{R\subseteq T\\|R|=j}}h_R(S).
\]
Each function \(h_R\) in the last display belongs to \(U_j\), and
\(U_j\subseteq U_r\) by nesting.  Therefore,
\(\phi_{r,T}\in U_r\).

We next show that \(\phi_{r,T}\perp U_{r-1}\).  By nesting,
\(U_{r-1}\) is spanned by the inclusion indicators \(h_R\) with
\(|R|\leqslant r-1\).  Fix such a set \(R\), and write
\[
  a=|R\cap T|,
  \qquad
  b=|R\setminus T|,
  \qquad
  a+b=|R|.
\]
All probabilities and conditional expectations below are taken under
\(S\sim\nu_{d,k}\).  Conditional on \(L_T(S)=\ell\), the set
\(S\cap T\) is uniformly distributed among the \(\ell\)-subsets of
\(T\), while \(S\setminus T\) is uniformly distributed among the
\((k-\ell)\)-subsets of \([d]\setminus T\).  Hence, the conditional
probability that \(R\cap T\) is contained in \(S\cap T\) is, with the convention that \(\binom uv=0\) whenever \(v<0\) or \(v>u\),
\[
  \frac{\binom{r-a}{\ell-a}}{\binom r\ell}
  =
  \frac{\binom\ell a}{\binom ra},
\]
and the conditional probability that \(R\setminus T\) is contained
in \(S\setminus T\) is
\[
  \frac{\binom{d-r-b}{k-\ell-b}}
       {\binom{d-r}{k-\ell}}
  =
  \frac{\binom{k-\ell}b}{\binom{d-r}b}.
\]
The two selections are conditionally independent, and therefore
\[
  \E
  \left[
    h_R(S)\mid L_T(S)=\ell
  \right]
  =
  \frac{\binom\ell a}{\binom ra}
  \frac{\binom{k-\ell}b}{\binom{d-r}b}.
\]
As a function of \(\ell\), the right-hand side is a polynomial of
degree at most \(a+b=|R|\leqslant r-1\).  Indeed,
\(\binom{\ell}{a}\) and \(\binom{k-\ell}{b}\) are polynomials in
\(\ell\) of degrees \(a\) and \(b\), respectively.  The
orthogonality property of \(H_{r,d,k}\) established in
Lemma~\ref{supp:lem:top-polynomial} now gives
\[
\begin{aligned}
  \inner{\phi_{r,T}}{h_R}
  &=
  \E_{S\sim\nu_{d,k}}
  \left[
    H_{r,d,k}(L_T(S))h_R(S)
  \right]
\\
  &=
  \E_{S\sim\nu_{d,k}}
  \left[
    H_{r,d,k}(L_T(S))
    \E
    \left[
      h_R(S)\mid L_T(S)
    \right]
  \right]
\\
  &=0.
\end{aligned}
\]
Thus, \(\phi_{r,T}\perp U_{r-1}\).  Together with
\(\phi_{r,T}\in U_r\), this proves that
\(\phi_{r,T}\in V_r\).
\end{proof}

\subsection{Proof of Theorem~\ref{thm:canonical-frame}}
\label{supp:proof-canonical-frame}

The assertion for \(r=0\) follows from
\(\phi_{0,\emptyset}\equiv1\),
\(V_0=\operatorname{span}\{\one\}\), and
\(A_{0,d,k}=1\).  Fix \(1\leqslant r\leqslant m\).
By Lemma~\ref{supp:lem:frame-harmonicity}, all the functions
\(\phi_{r,T}\), with \(T\subseteq[d]\) and \(|T|=r\), belong to
\(V_r\) and have unit norm.

For \(g\in V_r\), define the frame operator
\[
  \mathcal F_rg
  =
  \sum_{\substack{T\subseteq[d]\\|T|=r}}
  \inner{g}{\phi_{r,T}}\phi_{r,T}.
\]
A finite family is a tight frame for \(V_r\) precisely when its frame
operator is a positive scalar multiple of the identity on \(V_r\).
We now prove that this is the case and determine the scalar.

The kernel of \(\mathcal F_r\), relative to the uniform inner
product, is
\[
  \mathcal Q_r(S,S')
  =
  \sum_{\substack{T\subseteq[d]\\|T|=r}}
  \phi_{r,T}(S)\phi_{r,T}(S'),
\]
in the sense that
\[
  (\mathcal F_rg)(S)
  =
  \frac1N
  \sum_{S'\in\Sdk}
  \mathcal Q_r(S,S')g(S').
\]
The permutation equivariance established in
Lemma~\ref{supp:lem:frame-harmonicity} gives
\[
  \phi_{r,T}(\pi(S))
  =
  \phi_{r,\pi^{-1}(T)}(S).
\]
Reindexing the sum defining \(\mathcal Q_r\) therefore yields
\[
  \mathcal Q_r(\pi(S),\pi(S'))
  =
  \mathcal Q_r(S,S')
\]
for every permutation \(\pi\) of \([d]\).

All the sets considered below belong to \(\Sdk\) and therefore have
the same cardinality \(k\). For two pairs \((S,S')\) and \((R,R')\), there exists a permutation
\(\pi\) of \([d]\) satisfying
\[
  \pi(S)=R
  \qquad\text{and}\qquad
  \pi(S')=R'
\]
if and only if
\[
  |S\cap S'|=|R\cap R'|.
\]
Indeed, because all four sets have cardinality \(k\), equality of the
intersection sizes implies equality of the cardinalities of the four
corresponding parts: the intersection, the two set differences, and
the complement of the union.  Bijections between these four pairs of
parts combine to form the required permutation of \([d]\). Consequently, \(\mathcal Q_r(S,S')\) depends only on
\(\operatorname{dist}_J(S,S')\).  Thus, there exist constants
\(q_{r,0},\ldots,q_{r,m}\) such that
\[
  \mathcal Q_r(S,S')
  =
  \sum_{\ell=0}^m
  q_{r,\ell}
  \one_{\{\operatorname{dist}_J(S,S')=\ell\}}.
\]

For \(0\leqslant\ell\leqslant m\), let \(A_\ell\) denote the
Johnson distance matrix indexed by \(\Sdk\), with entries
\[
  (A_\ell)_{S,S'}
  =
  \one_{\{\operatorname{dist}_J(S,S')=\ell\}}.
\]
We regard \(A_\ell\) as an operator on functions
\(g:\Sdk\to\mathbb R\), acting by
\[
\begin{aligned}
  (A_\ell g)(S)
  &=
  \sum_{S'\in\Sdk}
  (A_\ell)_{S,S'}g(S')
\\
  &=
  \sum_{\substack{S'\in\Sdk\\
  \operatorname{dist}_J(S,S')=\ell}}
  g(S').
\end{aligned}
\]
Thus, \(A_\ell g\) sums the values of \(g\) over all subsets at
Johnson distance \(\ell\) from \(S\).  Using the preceding expression
for \(\mathcal Q_r\), we obtain, for every \(g\in V_r\),
\[
\begin{aligned}
  (\mathcal F_rg)(S)
  &=
  \frac1N
  \sum_{S'\in\Sdk}
  \sum_{\ell=0}^m
  q_{r,\ell}(A_\ell)_{S,S'}g(S')
\\
  &=
  \frac1N
  \sum_{\ell=0}^m
  q_{r,\ell}(A_\ell g)(S).
\end{aligned}
\]
Equivalently, as an operator on \(V_r\),
\[
  \mathcal F_r
  =
  \left.
  \frac1N
  \sum_{\ell=0}^m q_{r,\ell}A_\ell
  \right|_{V_r}.
\]

The Johnson graph is distance-regular, so each distance matrix
\(A_\ell\) is a polynomial in its adjacency matrix \(A_1\)
\citep{BrouwerCohenNeumaier1989}.  Since
\(A_1=k(d-k)K\), Proposition~\ref{prop:johnson-spectrum} then implies
that \(A_\ell\) acts as multiplication by a scalar on every harmonic
space \(V_j\). Since \(\mathcal F_r\) is a
linear combination of these matrices, its restriction to \(V_r\)
also acts as multiplication by a scalar.  Thus, there exists
\(c_r\in\mathbb R\) such that
\[
  \mathcal F_rg=c_rg,
  \qquad g\in V_r.
\]

We determine \(c_r\) by taking the trace on \(V_r\).  The rank-one
operator
\[
  g\longmapsto
  \inner{g}{\phi_{r,T}}\phi_{r,T}
\]
has trace \(\norm{\phi_{r,T}}_2^2=1\).  Since there are
\(\binom dr\) subsets \(T\subseteq[d]\) of cardinality \(r\),
\[
\begin{aligned}
  c_r\dim(V_r)
  &=
  \operatorname{tr}(\mathcal F_r)
\\
  &=
  \sum_{\substack{T\subseteq[d]\\|T|=r}}
  \norm{\phi_{r,T}}_2^2
  =
  \binom dr.
\end{aligned}
\]
Hence,
\[
  c_r
  =
  \frac{\binom dr}{\dim(V_r)}
  =
  A_{r,d,k}^{-1}
  >0.
\]
The second equality uses \eqref{eq:frame-factor}. It follows that
\[
  \mathcal F_r
  =
  A_{r,d,k}^{-1}I_{V_r},
\]
which proves tightness.

The positivity of the scalar also implies that the frame functions
span \(V_r\).  For
every \(v\in V_r\),
\[
  v
  =
  \mathcal F_r(A_{r,d,k}v).
\]
By definition, every value of \(\mathcal F_r\) is a linear
combination of the functions \(\phi_{r,T}\).  Hence, \(V_r\) is
contained in their span.  The reverse inclusion follows from
Lemma~\ref{supp:lem:frame-harmonicity}, since every
\(\phi_{r,T}\) belongs to \(V_r\).  Therefore, the frame functions
span \(V_r\).

For an arbitrary \(f\in L^2(\Sdk,\nu_{d,k})\), the fact that
\(\phi_{r,T}\in V_r\) gives
\[
  \inner{f}{\phi_{r,T}}
  =
  \inner{P_rf}{\phi_{r,T}}.
\]
Applying
\[
  A_{r,d,k}\mathcal F_r=I_{V_r}
\]
to \(P_rf\) proves \eqref{eq:frame-reconstruction}. Taking the inner product of
\eqref{eq:frame-reconstruction} with \(P_rf\), and using again
\(\inner{P_rf}{\phi_{r,T}}=\inner{f}{\phi_{r,T}}\), gives
\eqref{eq:frame-parseval}.

It remains to establish the projection identity
\[
  \phi_{r,T}
  =
  \frac{P_rh_T}{\norm{P_rh_T}_2},
\]
announced in Subsection~\ref{ssec:canonical-frame}.
Let \(G_T\) be the group of permutations of \([d]\) that
preserve \(T\) setwise, and define
\[
  (\mathcal U_\pi f)(S)
  =
  f(\pi^{-1}(S)).
\]
The action of a permutation on an inclusion indicator simply
relabels its indexing subset.  Indeed, for every
\(R\subseteq[d]\),
\[
\begin{aligned}
  (\mathcal U_\pi h_R)(S)
  &=
  h_R(\pi^{-1}(S))
\\
  &=
  \one_{\{R\subseteq\pi^{-1}(S)\}}
  =
  \one_{\{\pi(R)\subseteq S\}}
  =
  h_{\pi(R)}(S).
\end{aligned}
\]
Since \(\pi\) maps the collection of \(j\)-subsets of \([d]\)
bijectively onto itself, the action \(\mathcal U_\pi\) permutes the
generators
\[
  \{h_R:R\subseteq[d],\ |R|=j\}
\]
of \(U_j\).  Consequently,
\[
  \mathcal U_\pi(U_j)=U_j,
  \qquad 0\leqslant j\leqslant m.
\]

Now let \(\pi\in G_T\).  By definition of \(G_T\), the permutation
\(\pi\) preserves \(T\) setwise, that is, \(\pi(T)=T\).  Applying the
preceding identity with \(R=T\) gives
\[
  \mathcal U_\pi h_T
  =
  h_{\pi(T)}
  =
  h_T.
\]
Thus, \(h_T\) is invariant under every permutation in \(G_T\).

The action also preserves the uniform inner product.  Indeed, the map
\(S\mapsto\pi^{-1}(S)\) is a bijection of \(\Sdk\), and therefore
\[
\begin{aligned}
  \inner{\mathcal U_\pi f}{\mathcal U_\pi g}
  &=
  \frac1N
  \sum_{S\in\Sdk}
  f(\pi^{-1}(S))g(\pi^{-1}(S))
\\
  &=
  \frac1N
  \sum_{R\in\Sdk}
  f(R)g(R)
  =
  \inner{f}{g}.
\end{aligned}
\]
The isometry \(\mathcal U_\pi\) also preserves
\(U_{r-1}^{\perp}\).  Indeed, if
\(f\in U_{r-1}^{\perp}\) and \(u\in U_{r-1}\), then
\[
  \inner{\mathcal U_\pi f}{u}
  =
  \inner{f}{\mathcal U_{\pi^{-1}}u}
  =
  0,
\]
because
\(\mathcal U_{\pi^{-1}}u\in U_{r-1}\).
Since \(\mathcal U_\pi\) also preserves \(U_r\), it preserves
\[
  V_r=U_r\cap U_{r-1}^{\perp}.
\]

We next show that \(P_r\) commutes with this action.  Since
\(P_rf\in V_r\) and \(V_r\) is invariant under \(\mathcal U_\pi\),
\[
  \mathcal U_\pi P_rf\in V_r.
\]
Moreover, for every \(v\in V_r\),
\[
\begin{aligned}
  \inner{\mathcal U_\pi(f-P_rf)}{v}
  &=
  \inner{f-P_rf}{\mathcal U_{\pi^{-1}}v}
  =
  0,
\end{aligned}
\]
because \(\mathcal U_{\pi^{-1}}v\in V_r\), whereas
\(f-P_rf\perp V_r\).  Thus,
\[
  \mathcal U_\pi f
  =
  \mathcal U_\pi P_rf
  +
  \mathcal U_\pi(f-P_rf)
\]
is the orthogonal decomposition of \(\mathcal U_\pi f\) into a
component in \(V_r\) and a component in \(V_r^\perp\).  Therefore,
\[
  P_r(\mathcal U_\pi f)
  =
  \mathcal U_\pi(P_rf),
\]
or equivalently,
\[
  P_r\mathcal U_\pi
  =
  \mathcal U_\pi P_r.
\]
Applying this identity to \(h_T\) gives, for every \(\pi\in G_T\),
\[
  \mathcal U_\pi(P_rh_T)
  =
  P_r(\mathcal U_\pi h_T)
  =
  P_rh_T.
\]
Hence, \(P_rh_T\) is invariant under \(G_T\).

The \(G_T\)-orbits in \(\Sdk\) are the level sets of
\(L_T(S)=|S\cap T|\). Consequently, if \(g\in V_r\) is \(G_T\)-invariant, there exist
values \(y_0,\ldots,y_r\) such that
\(
  g(S)=y_\ell
\)
 whenever \(L_T(S)=\ell\).

Because \(r\leqslant m\), all overlap values
\(0,\ldots,r\) are attainable.  Lagrange interpolation therefore
gives a unique polynomial \(q\) of degree at most \(r\) satisfying
\(q(\ell)=y_\ell\) for \(0\leqslant\ell\leqslant r\).  Hence,
\[
  g(S)=q(L_T(S)).
\]  
For \(0\leqslant j\leqslant r-1\),
\eqref{supp:eq:binomial-inclusion} gives
\[
  S\longmapsto\binom{L_T(S)}{j}
  \in U_j\subseteq U_{r-1}.
\]
Since the polynomials
\(\ell\mapsto\binom{\ell}{j}\),
\(0\leqslant j\leqslant r-1\), form a basis of the polynomials of
degree at most \(r-1\), it follows that
\[
  S\longmapsto p(L_T(S))
  \in U_{r-1}
\]
for every polynomial \(p\) of degree at most \(r-1\). Since \(g\in V_r\subseteq U_{r-1}^{\perp}\), every such polynomial
\(p\) satisfies
\[
  0
  =
  \inner{g}{p\circ L_T}
  =
  \sum_{\ell=0}^r
  w_{r,d,k}(\ell)q(\ell)p(\ell).
\]
Thus \(q\) belongs to the orthogonal complement of the polynomials of
degree at most \(r-1\) within the polynomials of degree at most \(r\).
This orthogonal complement is one-dimensional, and
Lemma~\ref{supp:lem:top-polynomial} shows that it is spanned by
\(H_{r,d,k}\).  Therefore \(q\) is a scalar multiple of
\(H_{r,d,k}\). It follows that every
\(G_T\)-invariant vector in \(V_r\) is a scalar multiple of
\(\phi_{r,T}\).  Conversely,
Lemma~\ref{supp:lem:frame-harmonicity} shows that \(\phi_{r,T}\) is
a \(G_T\)-invariant unit vector in \(V_r\).  Hence the
\(G_T\)-invariant subspace of \(V_r\) is exactly \(\operatorname{span}\{\phi_{r,T}\}\).  Since \(P_rh_T\) is
\(G_T\)-invariant, there exists \(\beta_{r,T}\in\mathbb R\) such that
\[
  P_rh_T
  =
  \beta_{r,T}\phi_{r,T}.
\]
It remains to determine the proportionality constant and its sign. Using self-adjointness of \(P_r\), together with
\(P_r\phi_{r,T}=\phi_{r,T}\), we obtain
\[
\begin{aligned}
  \beta_{r,T}
  &=
  \inner{P_rh_T}{\phi_{r,T}}
   =
  \inner{h_T}{P_r\phi_{r,T}}
   =
  \inner{h_T}{\phi_{r,T}}
\\
  &=
  \P_{S\sim\nu_{d,k}}(T\subseteq S)
  H_{r,d,k}(r)
  >0.
\end{aligned}
\]
Because \(\norm{\phi_{r,T}}_2=1\), it follows that
\[
  \norm{P_rh_T}_2
  =
  \beta_{r,T}.
\]
Dividing the proportionality identity by this norm gives
\[
  \phi_{r,T}
  =
  \frac{P_rh_T}{\norm{P_rh_T}_2},
\]
as claimed.

\subsection{Frame representation of the projection estimator}

Applying the reconstruction identity in
Theorem~\ref{thm:canonical-frame} separately on the harmonic levels
\(V_1,\ldots,V_D\) gives the alternative representation
\[
  \widehat f_D^{\,\mathrm{proj}}
  =
  \sum_{r=1}^D A_{r,d,k}
  \sum_{\substack{T\subseteq[d]\\|T|=r}}
  \widehat\beta_{r,T}\phi_{r,T},
  \qquad
  \widehat\beta_{r,T}
  =
  \frac1n\sum_{j=1}^n
  Y_j\phi_{r,T}(S_j).
\]
This formula is explicit and equivariant under relabeling of the
ground set, but it uses the redundant canonical frame and is not
intended as a computationally optimal implementation.

\section{Projection and minimax proofs}
\label{supp:projection-minimax}

\subsection{Proof of Proposition~\ref{prop:projection-risk}}

When \(D=0\), one has \(W_0=\{0\}\),
\(\widehat f_0^{\,\mathrm{proj}}=0\), and \(p_0=0\).  Since
\(\gamma_{1,d}=1\),
\[
  \E_{f,P}
  \left[
    \norm{\widehat f_0^{\,\mathrm{proj}}-f}_2^2
  \right]
  =
  \norm{f}_2^2
  \leqslant B
  =
  b_0(s,B).
\]
Thus, the result holds for \(D=0\).  We henceforth fix
\(1\leqslant D\leqslant m\),
\(f\in\mathcal E^s_{d,k}(B)\), and
\(P\in\mathcal P_{\sigma^2}\).

Let \((S,Y)\) denote a generic observation having the same
distribution as each pair \((S_j,Y_j)\).  Thus,
\[
  S\sim\nu_{d,k},
  \qquad
  Y=f(S)+\varepsilon,
  \qquad
  \varepsilon\sim P,
\]
where \(\varepsilon\) is independent of \(S\).

Let \(e_1,\ldots,e_{p_D}\) be an orthonormal basis of \(W_D\), and
write
\[
  \theta_\ell
  =
  \inner{f}{e_\ell},
  \qquad
  1\leqslant\ell\leqslant p_D.
\]
Since the noise is centered and independent of the design,
\[
  \E_{f,P}[\widehat\theta_\ell]
  =
  \E_{f,P}[Ye_\ell(S)]
  =
  \E_{S\sim\nu_{d,k}}[f(S)e_\ell(S)]
  =
  \theta_\ell.
\]
Hence,
\[
  P_{\leqslant D}^{\circ}f
  =
  \sum_{\ell=1}^{p_D}\theta_\ell e_\ell
\]
is the expectation of the empirical projection estimator.

We next establish the constant-diagonal identity
\eqref{eq:constant-kernel-diagonal}. For a permutation \(\pi\) of \([d]\), recall the action
\[
  (\mathcal U_\pi g)(S)
  =
  g(\pi^{-1}(S)).
\]
As shown in
Subsection~\ref{supp:proof-canonical-frame},
\(\mathcal U_\pi\) preserves every harmonic space \(V_r\) and is an
isometry for the uniform inner product. It therefore preserves
\(W_D=V_1\mathbin{\oplus^\perp}\cdots
\mathbin{\oplus^\perp}V_D\). Thus,
if \(e_1,\ldots,e_{p_D}\) is an orthonormal basis of \(W_D\), then so
is
\[
  \mathcal U_{\pi^{-1}}e_1,\ldots,
  \mathcal U_{\pi^{-1}}e_{p_D}.
\]
Since the projection kernel is independent of the orthonormal basis
used to represent \(W_D\), it follows that
\[
\begin{aligned}
  \mathcal K_D^\circ(\pi(S),\pi(S'))
  &=
  \sum_{\ell=1}^{p_D}
  e_\ell(\pi(S))e_\ell(\pi(S'))
\\
  &=
  \sum_{\ell=1}^{p_D}
  (\mathcal U_{\pi^{-1}}e_\ell)(S)
  (\mathcal U_{\pi^{-1}}e_\ell)(S')
\\
  &=
  \mathcal K_D^\circ(S,S').
\end{aligned}
\]

The permutation action on \(\Sdk\) is transitive.  Indeed, for any
\(S,R\in\Sdk\), choose a bijection from \(S\) onto \(R\) and a
bijection from \([d]\setminus S\) onto \([d]\setminus R\).
Together, these two bijections define a permutation \(\pi\) of
\([d]\) such that \(\pi(S)=R\).  The preceding invariance identity
then gives
\[
  \mathcal K_D^\circ(R,R)
  =
  \mathcal K_D^\circ(S,S).
\]
Thus, the diagonal
\(S\mapsto\mathcal K_D^\circ(S,S)\) is constant over \(\Sdk\).

Its value is determined by averaging under the uniform measure:
\[
\begin{aligned}
  \E_{S\sim\nu_{d,k}}
  [\mathcal K_D^\circ(S,S)]
  &=
  \sum_{\ell=1}^{p_D}
  \E_{S\sim\nu_{d,k}}[e_\ell(S)^2]
\\
  &=
  \sum_{\ell=1}^{p_D}\norm{e_\ell}_2^2
  =
  p_D.
\end{aligned}
\]
Consequently,
\[
  \mathcal K_D^\circ(S,S)
  =
  \sum_{\ell=1}^{p_D}e_\ell(S)^2
  =
  p_D,
  \qquad S\in\Sdk,
\]
which proves \eqref{eq:constant-kernel-diagonal}.

We now bound the coefficient-estimation term.  By the definitions of
the estimator and of the population projection,
\[
  \widehat f_D^{\,\mathrm{proj}}
  -
  P_{\leqslant D}^{\circ}f
  =
  \sum_{\ell=1}^{p_D}
  (\widehat\theta_\ell-\theta_\ell)e_\ell.
\]
Since the basis is orthonormal, the squared norm is the sum of the
squared coefficient differences.  Using also the unbiasedness of
\(\widehat\theta_\ell\), we obtain
\[
\begin{aligned}
  \E_{f,P}\!\left[
    \norm{
      \widehat f_D^{\,\mathrm{proj}}
      -P_{\leqslant D}^{\circ}f
    }_2^2
  \right]
&=
  \sum_{\ell=1}^{p_D}
  \E_{f,P}
  \left[
    (\widehat\theta_\ell-\theta_\ell)^2
  \right]\\
  &=
  \sum_{\ell=1}^{p_D}
  \Var_{f,P}(\widehat\theta_\ell).
\end{aligned}
\]
Since the observations are i.i.d.,
\[
  \Var_{f,P}(\widehat\theta_\ell)
  =
  \frac1n
  \Var_{f,P}\!\left(Ye_\ell(S)\right)
\leqslant
  \frac1n
  \E_{f,P}
  \left[
    Y^2e_\ell(S)^2
  \right].
\]
Summing over \(\ell\) and using the constant-diagonal identity gives
\[
\begin{aligned}
  \E_{f,P}\!\left[
    \norm{
      \widehat f_D^{\,\mathrm{proj}}
      -P_{\leqslant D}^{\circ}f
    }_2^2
  \right]
  &\leqslant
  \frac1n
  \E_{f,P}
  \left[
    Y^2
    \sum_{\ell=1}^{p_D}e_\ell(S)^2
  \right]
\\
  &=
  \frac{p_D}{n}\E_{f,P}[Y^2].
\end{aligned}
\]
The last equality is precisely where
\(\mathcal K_D^\circ(S,S)=p_D\) is used.

Because \(Y=f(S)+\varepsilon\), and because \(\varepsilon\) is
centered and independent of \(S\),
\[
  \E_{f,P}[f(S)\varepsilon]
  =
  0.
\]
Therefore,
\[
  \E_{f,P}[Y^2]
  =
  \norm{f}_2^2+\E_P[\varepsilon^2]
\leqslant
  B+\sigma^2.
\]
We conclude that
\[
  \E_{f,P}\!\left[
    \norm{
      \widehat f_D^{\,\mathrm{proj}}
      -P_{\leqslant D}^{\circ}f
    }_2^2
  \right]
  \leqslant
  (\sigma^2+B)\frac{p_D}{n}.
\]

Finally, for every realization of the sample,
\[
  \widehat f_D^{\,\mathrm{proj}}
  -
  P_{\leqslant D}^{\circ}f
  \in W_D,
\]
whereas
\[
  f-P_{\leqslant D}^{\circ}f
  \in W_D^\perp.
\]
Pythagoras therefore gives
\[
  \norm{\widehat f_D^{\,\mathrm{proj}}-f}_2^2
  =
  \norm{
    \widehat f_D^{\,\mathrm{proj}}
    -P_{\leqslant D}^{\circ}f
  }_2^2
+
  \norm{
    f-P_{\leqslant D}^{\circ}f
  }_2^2.
\]
If \(D<m\), the harmonic tail bound
\eqref{eq:harmonic-tail} gives
\[
  \norm{
    f-P_{\leqslant D}^{\circ}f
  }_2^2
  =
  \sum_{r>D}\norm{P_rf}_2^2
  \leqslant
  b_D(s,B).
\]
If \(D=m\), then
\(W_m=V_0^\perp\); since \(f\) is centered,
\(f\in W_m\) and
\(P_{\leqslant m}^{\circ}f=f\).  Hence, the approximation term is
zero. 

Combining the
estimation and approximation bounds, and then taking the supremum
over \(f\in\mathcal E^s_{d,k}(B)\) and
\(P\in\mathcal P_{\sigma^2}\), proves
\eqref{eq:projection-risk}.

\subsection{Proof of Theorem~\ref{thm:johnson-minimax}}

Write
\[
  a_D=a_D(s,B),
  \qquad
  v_D=v_D(n,\sigma^2),
  \qquad
  \mathfrak R=\mathfrak R_{n,d,k}(s,B,\sigma^2).
\]

\emph{Lower bound.}
Fix \(1\leqslant D\leqslant m\), and put \(p=p_D\).
For every \(g\in W_D\), the spectral decomposition of the
Johnson--Sobolev energy and the monotonicity of
\(r\mapsto\gamma_{r,d}\) give
\[
  I_{J,d,k}^{(2,s)}(g)
  =
  \sum_{r=1}^D
  \gamma_{r,d}^{\,s}\|P_rg\|_2^2
  \leqslant
  \gamma_{D,d}^{\,s}\|g\|_2^2.
\]
Consequently,
\[
  \mathcal B_D
  :=
  \left\{
    g\in W_D:\|g\|_2^2\leqslant a_D
  \right\}
  \subseteq
  \mathcal E^s_{d,k}(B).
\]

We restrict the noise distribution to the Gaussian member
\(G=N(0,\sigma^2)\) of \(\mathcal P_{\sigma^2}\).  Since the design
law does not depend on the response function, the chain rule for
Kullback--Leibler divergence gives, for \(g,g'\in W_D\),
\begin{align}
  \operatorname{KL}(\mathbb P_{g,G},\mathbb P_{g',G})
  &=
  \frac{1}{2\sigma^2}
  \sum_{j=1}^n
  \E_{S_j\sim\nu_{d,k}}
  \bigl[\{g(S_j)-g'(S_j)\}^2\bigr]
  \notag\\
  &=
  \frac{n}{2\sigma^2}\|g-g'\|_2^2.
  \label{supp:eq:regression-KL}
\end{align}

Let \(e_1,\ldots,e_p\) be an orthonormal basis of \(W_D\).  For
\(\omega,\omega'\in\{0,1\}^p\), define their Hamming distance by
\[
  \operatorname{Ham}(\omega,\omega')
  =
  \sum_{\ell=1}^p
  \one_{\{\omega_\ell\ne\omega'_\ell\}}.
\]
Massart's version of the Varshamov--Gilbert lemma
\citep[Lemma~4.7]{Massart2007}, applied with \(\alpha=1/2\), provides
a set \(\Omega\subseteq\{0,1\}^p\) such that
\begin{equation}\label{supp:eq:massart-packing}
  \operatorname{Ham}(\omega,\omega')>\frac p4
  \quad\text{for all distinct }\omega,\omega'\in\Omega,
  \qquad
  \log|\Omega|>\frac p8.
\end{equation}
Translating \(\Omega\) by any one of its elements, using
coordinatewise addition modulo two, preserves these properties.
We may therefore assume that \(0\in\Omega\).

For \(\omega\in\Omega\), set
\[
  g_\omega
  =
  \delta\sum_{\ell=1}^p\omega_\ell e_\ell,
  \qquad
  \delta^2
  =
  c_1
  \min\left\{
    \frac{a_D}{p},
    \frac{\sigma^2}{n}
  \right\},
\]
where \(c_1\in(0,1]\) is a sufficiently small universal constant.
Orthonormality gives
\[
  \|g_\omega\|_2^2
  \leqslant p\delta^2
  \leqslant a_D,
\]
so every \(g_\omega\) belongs to \(\mathcal B_D\).  Moreover,
\begin{equation}\label{supp:eq:packing-separation}
  \|g_\omega-g_{\omega'}\|_2^2
  =
  \delta^2\operatorname{Ham}(\omega,\omega')
  >
  \frac{p\delta^2}{4},
  \qquad \omega\ne\omega'.
\end{equation}

Suppose that \(p\geqslant8\), and let \(M=|\Omega|-1\).  Since
\(0\in\Omega\), the zero response may be used as the reference
distribution.  By \eqref{supp:eq:regression-KL},
\[
  \operatorname{KL}(\mathbb P_{g_\omega,G},
                     \mathbb P_{0,G})
  \leqslant
  \frac{c_1p}{2},
  \qquad
  \omega\in\Omega\setminus\{0\}.
\]
Moreover, \(\log|\Omega|>p/8\) and \(p\geqslant8\) imply
\(|\Omega|\geqslant3\), and hence
\(M=|\Omega|-1\geqslant|\Omega|/2\).  Therefore,
\[
  \log M
  \geqslant
  \log|\Omega|-\log2
  >
  \frac p8-\log2
  \geqslant
  \frac{1-\log2}{8}\,p.
\]
Thus, by choosing \(c_1>0\) sufficiently small, the preceding
Kullback--Leibler divergences are bounded by a sufficiently small
fixed fraction of \(\log M\), as required by Fano's inequality.

By \eqref{supp:eq:packing-separation}, the signals are separated by
more than \(2s_0\) in \(L^2\), where
\[
  s_0=\frac{\sqrt p\,\delta}{4}.
\]
Fano's inequality in the form stated by
\citet[Theorem~2.5]{Tsybakov2009} gives a universal constant \(c_{\mathrm F}>0\) such that
\[
  \inf_{\widehat f}
  \sup_{\omega\in\Omega}
  \mathbb P_{g_\omega,G}
  \left(
    \|\widehat f-g_\omega\|_2\geqslant s_0
  \right)
  \geqslant c_{\mathrm F}.
\]
Since
\[
  \E_{g_\omega,G}
  \left[
    \|\widehat f-g_\omega\|_2^2
  \right]
  \geqslant
  s_0^2
  \mathbb P_{g_\omega,G}
  \left(
    \|\widehat f-g_\omega\|_2\geqslant s_0
  \right),
\]
it follows that
\[
  R_n^\star(\mathcal E^s_{d,k}(B))
  \geqslant
  c_{\mathrm F}s_0^2
  =
  \frac{c_{\mathrm F}p\delta^2}{16}
  =
  \frac{c_{\mathrm F}c_1}{16}
  \min\left\{
    a_D,\frac{\sigma^2p}{n}
  \right\}
  =
  c_{\geqslant8}\min\{a_D,v_D\},
\]
where
\[
  c_{\geqslant8}
  =
  \frac{c_{\mathrm F}c_1}{16}>0
\]
is universal. The condition
\(p\geqslant8\) is needed only for this uniform application of Fano's
inequality; the packing \eqref{supp:eq:massart-packing} exists for
every \(p\geqslant1\).

It remains to cover the finitely many dimensions \(1\leqslant p<8\). Let \(e_1\) be any unit vector in \(W_D\)
and consider
\[
  g_+=\delta_0e_1,
  \qquad
  g_-=-\delta_0e_1,
  \qquad
  \delta_0^2
  =
  \min\left\{
    a_D,\frac{\sigma^2}{n}
  \right\}.
\]
Both signals belong to \(\mathcal B_D\), and
\[
  \|g_+-g_-\|_2=2\delta_0.
\]
Moreover, \eqref{supp:eq:regression-KL} gives
\[
  \operatorname{KL}(\mathbb P_{g_+,G},
                     \mathbb P_{g_-,G})
  =
  \frac{n}{2\sigma^2}\|g_+-g_-\|_2^2
  =
  \frac{2n\delta_0^2}{\sigma^2}
  \leqslant2.
\]

To connect testing and estimation, associate with any estimator
\(\widehat f\) the minimum-distance test
\[
  \psi_{\widehat f}
  \in
  \underset{\eta\in\{+,-\}}{\arg\min}\,
  \|\widehat f-g_\eta\|_2,
\]
with an arbitrary rule for breaking ties.  Since the two signals are
separated by \(2\delta_0\),
\[
  \{\psi_{\widehat f}\neq\eta\}
  \subseteq
  \{\|\widehat f-g_\eta\|_2\geqslant\delta_0\},
  \qquad \eta\in\{+,-\}.
\]
Consequently,
\[
  \inf_{\widehat f}
  \max_{\eta\in\{+,-\}}
  \mathbb P_{g_\eta,G}
  \left(
    \|\widehat f-g_\eta\|_2\geqslant\delta_0
  \right)
  \geqslant p_{e,1},
\]
where
\[
  p_{e,1}
  :=
  \inf_{\psi}
  \max_{\eta\in\{+,-\}}
  \mathbb P_{g_\eta,G}(\psi\neq\eta),
\]
the infimum being taken over all measurable tests
\(\psi\) with values in \(\{+,-\}\). By the Kullback--Leibler version of the two-hypothesis bound
\citep[Theorem~2.2(iii)]{Tsybakov2009},
\[
  p_{e,1}
  \geqslant
  \frac14\exp(-2).
\]
For every estimator \(\widehat f\) and every
\(\eta\in\{+,-\}\),
\[
  \E_{g_\eta,G}
  \left[
    \|\widehat f-g_\eta\|_2^2
  \right]
  \geqslant
  \delta_0^2
  \mathbb P_{g_\eta,G}
  \left(
    \|\widehat f-g_\eta\|_2\geqslant\delta_0
  \right).
\]
Combining this inequality with the preceding reduction from
estimation to testing gives
\[
  R_n^\star(\mathcal E^s_{d,k}(B))
  \geqslant
  \delta_0^2p_{e,1}
  \geqslant
  \frac{\delta_0^2}{4}\exp(-2)
  =
  c_3
  \min\left\{
    a_D,\frac{\sigma^2}{n}
  \right\},
\]
where
\(
  c_3
  :=
  \frac{1}{4}\exp(-2)>0
\)
is universal. Since \(1\leqslant p<8\),
\[
  \min\left\{
    a_D,\frac{\sigma^2}{n}
  \right\}
  \geqslant
  \frac18
  \min\left\{
    a_D,\frac{\sigma^2p}{n}
  \right\}
  =
  \frac18\min\{a_D,v_D\}.
\]
Consequently, with
\[
  c
  =
  \min\left\{
    c_{\geqslant8},\frac{c_3}{8}
  \right\}>0,
\]
we have proved, for every \(1\leqslant D\leqslant m\),
\[
  R_n^\star(\mathcal E^s_{d,k}(B))
  \geqslant
  c\min\{a_D,v_D\}.
\]
Maximizing over \(D\) gives
\[
  R_n^\star(\mathcal E^s_{d,k}(B))
  \geqslant
  c\,\mathfrak R,
\]
which proves the lower bound.

\emph{Upper bound.}
The sequence \((v_D)_{D=1}^m\) is increasing because
\[
  p_D=\binom dD-1
\]
is increasing for \(D\leqslant m\leqslant d/2\).  Similarly,
\((a_D)_{D=1}^m\) is decreasing because
\(D\mapsto\gamma_{D,d}\) is increasing on this range.  Adopt the
conventions
\[
  v_0=0,
  \qquad
  a_0=+\infty,
\]
and let
\[
  D_\star
  =
  \max\left\{
    0\leqslant D\leqslant m:
    v_D\leqslant a_D
  \right\}.
\]
The set in this definition is nonempty because it contains \(D=0\).

We first prove the lower comparison asserted after
\eqref{eq:spectral-minimax-scale}.  Fix
\(0\leqslant D\leqslant m\).  If \(D\geqslant1\) and \(1\leqslant q\leqslant D\), then
\[
  \min\{v_q,a_q\}
  \leqslant
  v_q
  \leqslant
  v_D.
\]
If \(D<m\) and \(D<q\leqslant m\), then
\[
  \min\{v_q,a_q\}
  \leqslant
  a_q
  \leqslant
  a_{D+1}
  =
  b_D(s,B).
\]
The endpoint cutoffs are covered as well: when \(D=0\), only the
second comparison is needed, whereas when \(D=m\), only the first is
needed and \(b_m(s,B)=0\). Taking the maximum over \(q\) shows that
\[
  \mathfrak R
  \leqslant
  v_D+b_D(s,B).
\]
Since this holds for every cutoff \(D\),
\[
  \mathfrak R
  \leqslant
  \min_{0\leqslant D\leqslant m}
  \{v_D+b_D(s,B)\}.
\]
We now prove the reverse comparison up to the factor two.

If \(D_\star=m\), then \(v_m\leqslant a_m\), and hence
\[
  \min_{0\leqslant D\leqslant m}
  \left\{
    v_D+\one_{\{D<m\}}a_{D+1}
  \right\}
  \leqslant
  v_m
  =
  \min\{v_m,a_m\}
  \leqslant
  \mathfrak R.
\]

Suppose now that \(D_\star<m\).  By maximality,
\[
  v_{D_\star+1}>a_{D_\star+1},
\]
so
\[
  \mathfrak R
  \geqslant
  \min\{v_{D_\star+1},a_{D_\star+1}\}
  =
  a_{D_\star+1}.
\]
If \(D_\star\geqslant1\), then
\(v_{D_\star}\leqslant a_{D_\star}\), and therefore
\[
  \mathfrak R
  \geqslant
  \min\{v_{D_\star},a_{D_\star}\}
  =
  v_{D_\star}.
\]
Using \(D=D_\star\) in the minimum consequently gives
\[
  \min_{0\leqslant D\leqslant m}
  \left\{
    v_D+\one_{\{D<m\}}a_{D+1}
  \right\}
  \leqslant
  v_{D_\star}+a_{D_\star+1}
  \leqslant
  2\mathfrak R.
\]
If \(D_\star=0\), maximality gives \(v_1>a_1\), and the candidate
\(D=0\) instead yields
\[
  v_0+a_1
  =
  a_1
  =
  \min\{v_1,a_1\}
  \leqslant
  \mathfrak R.
\]
Thus, in every case,
\begin{equation}\label{supp:eq:crossing}
  \min_{0\leqslant D\leqslant m}
  \left\{
    v_D+\one_{\{D<m\}}a_{D+1}
  \right\}
  \leqslant
  2\mathfrak R.
\end{equation}

Finally, set
\[
  \tau
  =
  1+\frac{B}{\sigma^2}
  \geqslant1.
\]
For every \(0\leqslant D\leqslant m\),
\[
  (\sigma^2+B)\frac{p_D}{n}
  =
  \tau v_D,
  \qquad
  b_D(s,B)
  =
  \one_{\{D<m\}}a_{D+1}.
\]
Moreover, since \(\tau\geqslant1\),
\[
  v_D+b_D(s,B)
  \leqslant
  \tau v_D+b_D(s,B)
  \leqslant
  \tau\{v_D+b_D(s,B)\}.
\]
Taking minima over \(D\), and using the two comparisons proved above together with \eqref{supp:eq:crossing}, 
gives
\begin{align*}
  \mathfrak R
  \leqslant
  \min_{0\leqslant D\leqslant m}
  \{v_D+b_D(s,B)\}
  &\leqslant
  \min_{0\leqslant D\leqslant m}
  \{\tau v_D+b_D(s,B)\}\\
  &\leqslant
  \tau
  \min_{0\leqslant D\leqslant m}
  \{v_D+b_D(s,B)\}\\
  &\leqslant
  2\tau\mathfrak R.
\end{align*}
This is precisely the deterministic comparison stated after
\eqref{eq:spectral-minimax-scale}.

Proposition~\ref{prop:projection-risk}, optimized over
\(0\leqslant D\leqslant m\), now gives
\begin{align*}
  R_n^\star(\mathcal E^s_{d,k}(B))
  &\leqslant
  \min_{0\leqslant D\leqslant m}
  \{\tau v_D+b_D(s,B)\}\\
  &\leqslant
  2\tau\mathfrak R\\
  &=
  2\left(1+\frac{B}{\sigma^2}\right)\mathfrak R,
\end{align*}
which proves the upper bound in
\eqref{eq:johnson-minimax}.

If \(B/\sigma^2\leqslant C_{\mathrm{snr}}\), then the two bounds just
proved yield
\[
  c\,\mathfrak R_{n,d,k}(s,B,\sigma^2)
  \leqslant
  R_n^\star(\mathcal E^s_{d,k}(B))
  \leqslant
  2(1+C_{\mathrm{snr}})
  \mathfrak R_{n,d,k}(s,B,\sigma^2).
\]
The constant \(c>0\) is universal, and the upper constant depends
only on \(C_{\mathrm{snr}}\).  The comparison is therefore uniform
over all \(d,k,n,s,B,\sigma^2\) satisfying the assumptions of the
theorem and \(B/\sigma^2\leqslant C_{\mathrm{snr}}\).

\section{Random-design proofs}
\label{supp:random-design}

\subsection{Proof of Proposition~\ref{prop:johnson-gram}}

Fix \(1\leqslant D\leqslant m\). By \eqref{eq:constant-kernel-diagonal}, the leverage score is constant, i.e.,
\( \|x_D(S)\|_2^2=p_D\), \(S\in\Sdk\). Write
\[
  \widehat G_D
  =
  \sum_{j=1}^n Z_j,
  \qquad
  Z_j
  =
  \frac1n x_D(S_j)x_D(S_j)^\top.
\]
The matrices \(Z_1,\ldots,Z_n\) are independent and positive
semidefinite.  Moreover, orthonormality gives
\[
  \E_{S_j\sim\nu_{d,k}}[Z_j]
  =
  \frac1n I_{p_D},
  \qquad
  \sum_{j=1}^n
  \E_{S_j\sim\nu_{d,k}}[Z_j]
  =
  I_{p_D}.
\]
Since a rank-one matrix \(xx^\top\) has unique nonzero eigenvalue
\(\|x\|_2^2\), the constant-leverage identity also gives
\[
  \lambda_{\max}(Z_j)
  =
  \frac1n\|x_D(S_j)\|_2^2
  =
  \frac{p_D}{n}
  \qquad\text{almost surely}.
\]

Thus, in the notation of the matrix Chernoff inequalities
\citep[Corollary~5.2]{Tropp2012},
\[
  \mu_{\min}
  =
  \lambda_{\min}\left(
    \sum_{j=1}^n\E_{S_j\sim\nu_{d,k}}[Z_j]
  \right)
  =1,
  \qquad
  \mu_{\max}
  =
  \lambda_{\max}\left(
    \sum_{j=1}^n\E_{S_j\sim\nu_{d,k}}[Z_j]
  \right)
  =1,
\]
and the uniform eigenvalue bound is \(R=p_D/n\).

At relative deviation \(1/2\), the upper-tail inequality gives
\[
  \P_{\nu_{d,k}^{\otimes n}}
  \left\{
    \lambda_{\max}(\widehat G_D)>\frac32
  \right\}
  \leqslant
  p_D
  \exp\left(-c_0\frac{n}{p_D}\right),
\]
where
\[
  c_0
  =
  \frac32\log\left(\frac32\right)-\frac12.
\]
The corresponding lower-tail exponent is
\[
  c_-
  =
  \frac12+\frac12\log\left(\frac12\right)
  \approx0.1534,
\]
which is larger than
\(c_0\approx0.1082\).
Therefore
\[
  \P_{\nu_{d,k}^{\otimes n}}
  \left\{
    \lambda_{\min}(\widehat G_D)<\frac12
  \right\}
  \leqslant
  p_D
  \exp\left(-c_0\frac{n}{p_D}\right).
\]
Since \(\widehat G_D\) is symmetric,
\[
  \|\widehat G_D-I_{p_D}\|_{\mathrm{op}}
  =
  \max\left\{
    \lambda_{\max}(\widehat G_D)-1,\,
    1-\lambda_{\min}(\widehat G_D)
  \right\}.
\]
Consequently,
\[
  \mathcal A_D^c
  =
  \left\{
    \lambda_{\max}(\widehat G_D)>\frac32
  \right\}
  \cup
  \left\{
    \lambda_{\min}(\widehat G_D)<\frac12
  \right\}.
\]
A union bound consequently yields
\[
  \P_{\nu_{d,k}^{\otimes n}}(\mathcal A_D^c)
  \leqslant
  2p_D
  \exp\left(-c_0\frac{n}{p_D}\right),
\]
which is \eqref{eq:gram-concentration}.

\subsection{Proof of Theorem~\ref{thm:stable-ls}}

Fix \(1\leqslant D\leqslant m\),
\(f\in\mathcal E^s_{d,k}(B)\), and
\(P\in\mathcal P_{\sigma^2}\).  As in the main text, write
\[
  g_D=P_{\leqslant D}^{\circ}f,
  \qquad
  h_D=f-g_D,
\]
and let \(\theta_D\) be the coefficient vector of \(g_D\) in the
chosen orthonormal basis.  Then
\(g_D(S)=x_D(S)^\top\theta_D\), and population orthogonality gives
\begin{equation}\label{supp:eq:residual-centered}
  \E_{S\sim\nu_{d,k}}[x_D(S)h_D(S)]=0.
\end{equation}
Set
\[
  z_h=\frac1n\sum_{j=1}^n x_D(S_j)h_D(S_j),
  \qquad
  z_\varepsilon
  =\frac1n\sum_{j=1}^n x_D(S_j)\varepsilon_j.
\]
Then \eqref{eq:ls-error-decomposition} reads, on
\(\mathcal A_D\),
\[
  \widehat\theta_D^{\,\mathrm{LS}}-\theta_D
  =
  \widehat G_D^{-1}(z_h+z_\varepsilon).
\]

Conditional on the design, \(z_h\), \(\widehat G_D\), and
\(\one_{\mathcal A_D}\) are fixed, whereas
\(\E_{f,P}[z_\varepsilon\mid S_1,\ldots,S_n]=0\).  Consequently, the
cross term between
\(\widehat G_D^{-1}z_h\) and
\(\widehat G_D^{-1}z_\varepsilon\) has conditional expectation zero.
Since \(\norm{\widehat G_D^{-1}}_{\mathrm{op}}\leqslant2\) on
\(\mathcal A_D\),
\begin{equation}\label{supp:eq:stable-coef-risk}
  \E_{f,P}\!\left[
    \one_{\mathcal A_D}
    \norm{\widehat\theta_D^{\,\mathrm{LS}}-\theta_D}_2^2
  \right]
  \leqslant4\E_{f,P}[\norm{z_h}_2^2]
             +4\E_{f,P}[\norm{z_\varepsilon}_2^2].
\end{equation}

The summands defining \(z_h\) are centered by
\eqref{supp:eq:residual-centered}.  Independence across observations
and constant leverage give
\begin{align}
  \E_{f,P}[\norm{z_h}_2^2]
  &=\frac1n\E_{S\sim\nu_{d,k}}
    [h_D(S)^2\norm{x_D(S)}_2^2]
    =\frac{p_D}{n}\norm{h_D}_2^2.
    \label{supp:eq:zh-moment}
\end{align}
Similarly, independence and centering of the noise eliminate all
cross-observation terms, and
\begin{align}
  \E_{f,P}[\norm{z_\varepsilon}_2^2]
  &=\frac1n\E_P[\varepsilon_1^2]
    \E_{S\sim\nu_{d,k}}[\norm{x_D(S)}_2^2]
  \leqslant\sigma^2\frac{p_D}{n}.
  \label{supp:eq:zeps-moment}
\end{align}

On \(\mathcal A_D\), orthonormality of the chosen basis and
orthogonality of \(h_D\) to \(W_D\) give
\[
  \norm{\widehat f_D^{\,\mathrm{LS}}-f}_2^2
  =\norm{\widehat\theta_D^{\,\mathrm{LS}}-\theta_D}_2^2
   +\norm{h_D}_2^2.
\]
On \(\mathcal A_D^c\), the estimator is zero and the loss is
\(\norm{f}_2^2\leqslant B\) by
\eqref{eq:ellipsoid-l2-bound}. Combining this observation with
\eqref{supp:eq:stable-coef-risk}--\eqref{supp:eq:zeps-moment} gives
\[
  \E_{f,P}\!\left[
    \norm{\widehat f_D^{\,\mathrm{LS}}-f}_2^2
  \right]
  \leqslant
  \left(1+4\frac{p_D}{n}\right)\norm{h_D}_2^2
  +4\sigma^2\frac{p_D}{n}
  +B\P_{\nu_{d,k}^{\otimes n}}(\mathcal A_D^c).
\]
The harmonic tail bound gives \(\norm{h_D}_2^2\leqslant b_D(s,B)\),
and Proposition~\ref{prop:johnson-gram}, together with the trivial
bound \(\P_{\nu_{d,k}^{\otimes n}}(\mathcal A_D^c)\leqslant1\), gives
\[
  \P_{\nu_{d,k}^{\otimes n}}(\mathcal A_D^c)
  \leqslant
  \min\left\{1,2p_D\exp\left(-c_0\frac{n}{p_D}\right)\right\}.
\]
Taking the stated suprema proves \eqref{eq:stable-ls-risk}.

\subsection{Proof of Proposition~\ref{prop:rank-deficiency}}

Fix \(1\leqslant D\leqslant m\), and abbreviate
\[
  a_D=\frac{B}{\gamma_{D,d}^{\,s}}.
\]
As in the proof of Theorem~\ref{thm:johnson-minimax}, the Euclidean
ball
\[
  \mathcal B_D
  =
  \left\{
    g\in W_D:\|g\|_2^2\leqslant a_D
  \right\}
\]
is contained in \(\mathcal E^s_{d,k}(B)\).  We restrict the noise law to the degenerate distribution
\(\delta_0\), under which \(\varepsilon=0\) almost surely.  This
noiseless law belongs to \(\mathcal P_{\sigma^2}\) for every
\(\sigma^2>0\).

Let \(\Pi_D\) be the uniform distribution, with respect to the
\(L^2(\nu_{d,k})\) Euclidean structure of \(W_D\), on the sphere of
squared radius \(a_D\), and let \(\Theta\sim\Pi_D\).  Since this prior
is supported on \(\mathcal E^s_{d,k}(B)\), the minimax risk is at
least its Bayes risk in the noiseless submodel:
\[
  R_n^\star(\mathcal E^s_{d,k}(B))
  \geqslant
  \inf_{\widehat f}
  \int
  \E_{g,\delta_0}
  \left[
    \|\widehat f-g\|_2^2
  \right]
  \Pi_D(\mathrm dg).
\]

Because the prior is supported on \(W_D\), Pythagoras gives, for every
\(g\in W_D\) and every function-valued estimator \(\widehat f\),
\[
  \|\widehat f-g\|_2^2
  =
  \|P_{\leqslant D}^{\circ}\widehat f-g\|_2^2
  +
  \|(I-P_{\leqslant D}^{\circ})\widehat f\|_2^2.
\]
Projecting an estimator onto \(W_D\) therefore cannot increase its
loss under this prior.  It is consequently enough to consider
estimators taking values in \(W_D\). 

As in the main text, the chosen orthonormal basis identifies \(W_D\)
isometrically with \(\mathbb R^{p_D}\). Conditional on the design,
regard \(\Theta\) as its coefficient vector under this identification.
Write
\[
  \mathbb R^{p_D}
  =
  \operatorname{row}(\mathbf X_D)
  \mathbin{\oplus^\perp}
  \ker(\mathbf X_D).
\]
Let
\[
  \Theta_{\mathrm{row}}
  =
  P_{\operatorname{row}(\mathbf X_D)}\Theta,
  \qquad
  \Theta_{\mathrm{ker}}
  =
  P_{\ker(\mathbf X_D)}\Theta.
\]
Then
\[
  \Theta
  =
  \Theta_{\mathrm{row}}+\Theta_{\mathrm{ker}}.
\]
In the noiseless submodel,
\[
  \mathbf Y
  =
  \mathbf X_D\Theta
  =
  \mathbf X_D\Theta_{\mathrm{row}},
\]
because
\(\mathbf X_D\Theta_{\mathrm{ker}}=0\).  The observations determine
\(\Theta_{\mathrm{row}}\) uniquely.  Indeed, if
\(u,v\in\operatorname{row}(\mathbf X_D)\) satisfy
\(\mathbf X_Du=\mathbf X_Dv\), then
\[
  u-v
  \in
  \operatorname{row}(\mathbf X_D)
  \cap
  \ker(\mathbf X_D)
  =
  \{0\}.
\]
Thus, for a fixed design, the data determine
\(\Theta_{\mathrm{row}}\) but leave
\(\Theta_{\mathrm{ker}}\) unobserved. Conditional on the identified
value of \(\Theta_{\mathrm{row}}\), the spherical prior distributes
\(\Theta_{\mathrm{ker}}\) uniformly on the sphere in
\(\ker(\mathbf X_D)\) centered at zero and having squared radius
\[
  a_D-\|\Theta_{\mathrm{row}}\|_2^2.
\]
Here, \(\Theta_{\mathrm{row}}\) is data-dependent: the design
determines \(\operatorname{row}(\mathbf X_D)\), and
\(\mathbf Y\) determines the unique vector
\(u\in\operatorname{row}(\mathbf X_D)\) satisfying
\(\mathbf X_Du=\mathbf Y\). In particular, \(\Theta_{\mathrm{ker}}\) and
\(-\Theta_{\mathrm{ker}}\) have the same conditional distribution, so
\[
  \E_{\Theta}
  \left[
    \Theta_{\mathrm{ker}}
    \,\middle|\,
    S_1,\ldots,S_n,\mathbf Y
  \right]
  =0.
\]
Since
\(\Theta=\Theta_{\mathrm{row}}+\Theta_{\mathrm{ker}}\) and
\(\Theta_{\mathrm{row}}\) is determined by the data,
\[
\begin{aligned}
  \E_{\Theta}
  \left[
    \Theta
    \,\middle|\,
    S_1,\ldots,S_n,\mathbf Y
  \right]
  &=
  \Theta_{\mathrm{row}}
  +
  \E_{\Theta}
  \left[
    \Theta_{\mathrm{ker}}
    \,\middle|\,
    S_1,\ldots,S_n,\mathbf Y
  \right]\\
  &=
  \Theta_{\mathrm{row}}.
\end{aligned}
\]
Thus, \(\Theta_{\mathrm{row}}\) is the posterior mean and hence the
Bayes estimator under squared-error loss. Accordingly,
for a fixed design and the noiseless observations
\(\mathbf Y=\mathbf X_D\Theta\),
\[
  \widehat\Theta^{\,B}
  (\mathbf X_D,\mathbf X_D\Theta)
  =
  P_{\operatorname{row}(\mathbf X_D)}\Theta.
\]
Consequently, its Bayes risk conditional on the design is
\[
  \E_{\Theta}
  \left[
    \left\|
      \widehat\Theta^{\,B}
      (\mathbf X_D,\mathbf X_D\Theta)-\Theta
    \right\|_2^2
    \,\middle|\,
    S_1,\ldots,S_n
  \right]
  =
  \E_{\Theta}
  \left[
    \|P_{\ker(\mathbf X_D)}\Theta\|_2^2
    \,\middle|\,
    S_1,\ldots,S_n
  \right].
\]

It remains to compute this expectation.  The uniform distribution on
the sphere of squared radius \(a_D\) in \(\mathbb R^{p_D}\) is
isotropic: all coordinate directions have the same second moment, and
their squared coordinates sum to \(a_D\).  Consequently,
\[
  \E_{\Theta}[\Theta\Theta^\top]
  =
  \frac{a_D}{p_D}I_{p_D}.
\]
For the orthogonal projector
\(P_{\ker(\mathbf X_D)}\), we have
\[
  \|P_{\ker(\mathbf X_D)}\Theta\|_2^2
  =
  \Theta^\top P_{\ker(\mathbf X_D)}\Theta.
\]
Since \(\Theta\) is independent of the design and the projector is
fixed conditionally on \(S_1,\ldots,S_n\),
\begin{align*}
  \E_{\Theta}
  \left[
    \|P_{\ker(\mathbf X_D)}\Theta\|_2^2
    \,\middle|\,
    S_1,\ldots,S_n
  \right]
  &=
  \operatorname{tr}
  \left\{
    P_{\ker(\mathbf X_D)}
    \E_{\Theta}[\Theta\Theta^\top]
  \right\}\\
  &=
  \frac{a_D}{p_D}
  \operatorname{tr}
  \left\{
    P_{\ker(\mathbf X_D)}
  \right\}\\
  &=
  a_D
  \frac{
    p_D-\operatorname{rank}(\mathbf X_D)
  }{p_D}.
\end{align*}
The last equality follows because the trace of an orthogonal projector
equals the dimension of its range, here
\[
  \dim\ker(\mathbf X_D)
  =
  p_D-\operatorname{rank}(\mathbf X_D).
\]
Averaging over the design gives
\[
  R_n^\star(\mathcal E^s_{d,k}(B))
  \geqslant
  a_D\rho_{D,n}.
\]
Since this holds for every \(1\leqslant D\leqslant m\), maximizing
over \(D\) proves \eqref{eq:rank-deficiency-lower}.

Moreover,
\[
  \operatorname{rank}(\mathbf X_D)
  \leqslant\min(n,p_D),
\]
and hence
\[
  p_D-\operatorname{rank}(\mathbf X_D)
  \geqslant(p_D-n)_+.
\]
Dividing by \(p_D\) and taking expectations proves
\eqref{eq:rank-deficiency-dimension}.

We finally verify the full-level identities.  Recall that
\(\mathcal O_n\) is the set of observed subsets and that
\[
  M_n=N-|\mathcal O_n|.
\]
Set \(q=|\mathcal O_n|=N-M_n\).  Repeated observations merely
duplicate rows of \(\mathbf X_m\), so its rank depends only on the
\(q\) distinct observed subsets.  Recall also that \(W_m\) is the
\((N-1)\)-dimensional space of all centered functions on \(\Sdk\).

If \(q<N\), evaluation at the \(q\) distinct observed subsets has
rank \(q\).  Indeed, let \(T_1,\ldots,T_{q}\) denote these
subsets and prescribe arbitrary values \(y_1,\ldots,y_{q}\).
Choose one unobserved subset \(T_0\), assign it the value
\(
  -\sum_{j=1}^{q}y_j,
\)
and assign zero to every other unobserved subset.  The resulting
function is centered and takes the prescribed values at
\(T_1,\ldots,T_{q}\).  Hence, the evaluation map from \(W_m\) onto
\(\mathbb R^{q}\) is surjective and has rank \(q\).

If \(q=N\), every subset in \(\Sdk\) is observed.  The complete
evaluation map sends \(f\in W_m\) to its vector of \(N\) values.
Since \(f\) is centered, this vector satisfies the single linear
constraint
\[
  \sum_{S\in\Sdk}f(S)=0.
\]
Conversely, every vector satisfying this constraint defines a centered
function on \(\Sdk\).  The image of the evaluation map is therefore
an \((N-1)\)-dimensional hyperplane, and its rank is \(N-1\).

Combining the two cases gives, for every realization of the design,
\[
  \operatorname{rank}(\mathbf X_m)
  =
  \min(q,N-1).
\]
Since \(p_m=N-1\) and \(M_n=N-q\), it follows that
\[
\begin{aligned}
  p_m-\operatorname{rank}(\mathbf X_m)
  &=
  (N-1)-\min(q,N-1)\\
  &=
  (M_n-1)_+.
\end{aligned}
\]
Taking expectations and dividing by \(p_m=N-1\) in
\eqref{eq:rank-deficiency} proves
\eqref{eq:full-rank-deficiency}.

It remains to derive the explicit expression.  Each fixed subset is
absent from all \(n\) independent draws with probability
\((1-1/N)^n\).  Summing these \(N\) absence indicators gives
\[
  \E_{\nu_{d,k}^{\otimes n}}[M_n]
  =
  N\left(1-\frac1N\right)^n.
\]
Moreover,
\[
  (M_n-1)_+
  =
  M_n-1+\one_{\{M_n=0\}}.
\]
Therefore,
\[
  \E_{\nu_{d,k}^{\otimes n}}[(M_n-1)_+]
  =
  N\left(1-\frac1N\right)^n
  -1
  +\P_{\nu_{d,k}^{\otimes n}}(M_n=0),
\]
which proves \eqref{eq:full-rank-deficiency-explicit}.

To conclude, we evaluate the coverage probability appearing in this
expression.  For each \(S\in\Sdk\), let \(A_S\) be the event that
\(S\) is absent from the sample.  Since
\[
  \{M_n=0\}
  =
  \bigcap_{S\in\Sdk}A_S^c,
\]
the inclusion--exclusion formula gives
\[
  \P_{\nu_{d,k}^{\otimes n}}(M_n=0)
  =
  \sum_{\mathcal J\subseteq\Sdk}
  (-1)^{|\mathcal J|}
  \P_{\nu_{d,k}^{\otimes n}}
  \left(
    \bigcap_{S\in\mathcal J}A_S
  \right).
\]
If \(|\mathcal J|=j\), avoiding every subset in \(\mathcal J\) in
all \(n\) draws has probability \((1-j/N)^n\).  Since there are
\(\binom Nj\) such collections \(\mathcal J\), it follows that
\[
  \P_{\nu_{d,k}^{\otimes n}}(M_n=0)
  =
  \sum_{j=0}^N
  (-1)^j\binom Nj
  \left(1-\frac jN\right)^n.
\]
When \(n<N\), this probability is zero because \(n\) draws cannot
cover \(N\) distinct subsets. This completes the proof.

\subsection{Proof of Proposition~\ref{prop:centered-completion}}

For every deterministic set \(\mathcal U\subseteq\Sdk\), write
\[
  \mathcal H(\mathcal U)
  =
  \left\{
    g\in W_m:
    g(S)=0
    \text{ for every }S\notin\mathcal U
  \right\}.
\]
Recall that
\[
  \mathcal U_n
  =
  \Sdk\setminus\mathcal O_n
\]
is the random set of unobserved subsets.  Then, consistently with the
notation used in the main text,
\[
  \mathcal H_n
  =
  \mathcal H(\mathcal U_n).
\]
Conditional on the design, \(\mathcal H_n\) is the space of centered
functions supported on the fixed set \(\mathcal U_n\).  If \(M_n\geqslant1\), its values on the \(M_n\) unobserved subsets
satisfy one linear constraint, so its dimension is \(M_n-1\).  If
\(M_n=0\), then \(\mathcal H_n=\{0\}\).  Hence, in both cases,
\[
  \dim(\mathcal H_n)
  =
  (M_n-1)_+.
\]

Let \(\widehat f^{\,\mathrm{comp},0}\) be the noiseless completion
estimator introduced in the main text.  It agrees with \(f\) on
\(\mathcal O_n\) and is constant on \(\mathcal U_n\).  As shown there,
\[
  f-\widehat f^{\,\mathrm{comp},0}
  =
  P_{\mathcal H_n}f.
\]
Equivalently,
\[
  \widehat f^{\,\mathrm{comp},0}-f
  =
  -P_{\mathcal H_n}f.
\]
This identity also holds when \(M_n=0\), since then
\(\mathcal H_n=\{0\}\) and
\(\widehat f^{\,\mathrm{comp},0}=f\).

Let \(\tau:\Sdk\to\Sdk\) be any permutation of the \(N\) subsets, and
let \(R_\tau\) be its action on functions:
\[
  (R_\tau g)(S)
  =
  g(\tau^{-1}(S)).
\]
Because the design points are sampled uniformly from \(\Sdk\), the
random set of unobserved subsets \(\mathcal U_n\) has the same
distribution as \(\tau(\mathcal U_n)\).  Moreover, relabeling the
unobserved subsets relabels the corresponding subspace:
\[
  \mathcal H(\tau(\mathcal U_n))
  =
  R_\tau\mathcal H_n.
\]
Since \(R_\tau\) preserves the uniform inner product, the associated
orthogonal projectors satisfy
\[
  P_{\mathcal H(\tau(\mathcal U_n))}
  =
  R_\tau P_{\mathcal H_n}R_\tau^{-1}.
\]
Averaging over the design and using the equality in distribution of
\(\mathcal U_n\) and \(\tau(\mathcal U_n)\) gives
\[
  R_\tau\mathcal T_nR_\tau^{-1}
  =
  \mathcal T_n,
  \qquad
  \mathcal T_n
  :=
  \E_{\nu_{d,k}^{\otimes n}}
  [P_{\mathcal H_n}].
\]
Thus, \(\mathcal T_n\) is the deterministic operator obtained by
averaging the random projector \(P_{\mathcal H_n}\) over the design;
in particular,
\[
  \mathcal T_nf
  =
  \E_{\nu_{d,k}^{\otimes n}}
  [P_{\mathcal H_n}f],
\] 
and  \(\mathcal T_n\) commutes with every permutation of the \(N\)
subsets.

Fix an arbitrary ordering of the \(N\) elements of \(\Sdk\), and
identify each function \(f\in L^2(\Sdk,\nu_{d,k})\) with its vector
of values
\[
  (f(S))_{S\in\Sdk}\in\mathbb R^N.
\]
Under this identification, \(\mathcal T_n\) is a linear operator on
\(\mathbb R^N\).  Let \(t_n(S,T)\) denote the entry of its matrix
indexed by \(S,T\in\Sdk\). The identity
\(R_\tau\mathcal T_nR_\tau^{-1}=\mathcal T_n\) implies
\[
  t_n(\tau(S),\tau(T))
  =
  t_n(S,T)
\]
for every permutation \(\tau\) of \(\Sdk\).  For any \(S,R\in\Sdk\), there exists a permutation \(\tau\) such that
\(\tau(S)=R\).  Likewise, for any \(S\ne T\) and \(R\ne U\), there
exists a permutation \(\tau\) such that
\(\tau(S)=R\) and \(\tau(T)=U\).  Hence all diagonal entries of the
matrix are equal, and all off-diagonal entries are equal. Therefore, for some scalars \(a_n\) and \(b_n\),
\[
  \mathcal T_n
  =
  (a_n-b_n)I_N+b_n\mathbf 1\mathbf 1^\top.
\]
Equivalently, for every function \(f\) and every \(S\in\Sdk\),
\[
  (\mathcal T_nf)(S)
  =
  (a_n-b_n)f(S)
  +
  b_n\sum_{T\in\Sdk}f(T).
\]
Every space \(\mathcal H_n\) consists of centered functions and is
therefore orthogonal to the constant function \(\mathbf 1\).
Consequently,
\[
  P_{\mathcal H_n}\mathbf 1=0
  \qquad\text{and hence}\qquad
  \mathcal T_n\mathbf 1=0.
\]
Finally, if \(f\in W_m\), then \(f\) is centered and
\(\mathbf 1^\top f=0\).  Therefore,
\[
  \mathcal T_nf
  =
  (a_n-b_n)f.
\]
Thus, the restriction of \(\mathcal T_n\) to \(W_m\) is a scalar
multiple of the identity.

Write
\(
  c_n=a_n-b_n,
\)
so that
\[
  \mathcal T_nf=c_nf,
  \qquad f\in W_m.
\]
Moreover, \(\mathcal T_n\mathbf 1=0\).  Since
\(L^2(\Sdk,\nu_{d,k})\) is the orthogonal sum of the constant
functions and the \((N-1)\)-dimensional space \(W_m\), it follows that
\[
  \operatorname{tr}(\mathcal T_n)
  =
  c_n(N-1).
\]
On the other hand, linearity of the trace and the fact that the trace
of an orthogonal projector equals the dimension of its range give
\[
\begin{aligned}
  \operatorname{tr}(\mathcal T_n)
  =
  \E_{\nu_{d,k}^{\otimes n}}
  \left[
    \operatorname{tr}(P_{\mathcal H_n})
  \right]
  &=
  \E_{\nu_{d,k}^{\otimes n}}
  [\dim(\mathcal H_n)]\\
  &=
  \E_{\nu_{d,k}^{\otimes n}}
  [(M_n-1)_+].
\end{aligned}
\]
Therefore,
\[
  c_n
  =
  \frac{
    \E_{\nu_{d,k}^{\otimes n}}[(M_n-1)_+]
  }{N-1}
  =
  \rho_{m,n}.
\]
Consequently, for every centered \(f\),
\begin{align}
  \E_{\nu_{d,k}^{\otimes n}}
  \left[
    \|P_{\mathcal H_n}f\|_2^2
  \right]
  &=
  \E_{\nu_{d,k}^{\otimes n}}
  \left[
    \langle f,P_{\mathcal H_n}f\rangle
  \right]
  \notag\\
  &=
  \langle f,\mathcal T_nf\rangle
  =
  \rho_{m,n}\|f\|_2^2.
  \label{supp:eq:completion-signal}
\end{align}
Here we used that an orthogonal projector is self-adjoint and
idempotent, so
\(\|P_{\mathcal H_n}f\|_2^2
=\langle f,P_{\mathcal H_n}f\rangle\).

We next control the contribution of observation noise.  Conditional
on the design, define
\[
  \overline\varepsilon(S)
  =
  \frac1{C_n(S)}
  \sum_{j:S_j=S}\varepsilon_j,
  \qquad S\in\mathcal O_n.
\]
These variables are conditionally independent and centered, and
\[
  \E_P
  \left[
    \overline\varepsilon(S)^2
    \,\middle|\,
    S_1,\ldots,S_n
  \right]
  \leqslant
  \frac{\sigma^2}{C_n(S)}.
\]
Since the completion rule is linear in the cell means and the noise
is centered, for every \(S\in\Sdk\),
\[
  \E_{f,P}
  \left[
    \widehat f^{\,\mathrm{comp}}(S)
    \,\middle|\,
    S_1,\ldots,S_n
  \right]
  =
  \widehat f^{\,\mathrm{comp},0}(S).
\]

If \(M_n\geqslant1\), the noise component
\(
  \widehat f^{\,\mathrm{comp}}
  -
  \widehat f^{\,\mathrm{comp},0}
\)
is equal to \(\overline\varepsilon(S)\) on \(\mathcal O_n\) and to
\[
  -\frac1{M_n}
  \sum_{T\in\mathcal O_n}\overline\varepsilon(T)
\]
on \(\mathcal U_n\).  The squared \(L^2(\nu_{d,k})\)-norm of this noise component is
\[
  \frac1N
  \left\{
    \sum_{S\in\mathcal O_n}\overline\varepsilon(S)^2
    +
    \frac1{M_n}
    \left(
      \sum_{S\in\mathcal O_n}\overline\varepsilon(S)
    \right)^2
  \right\}.
\]
Since the cell averages are conditionally independent and
centered,
\[
  \E_P
  \left[
    \left(
      \sum_{S\in\mathcal O_n}\overline\varepsilon(S)
    \right)^2
    \,\middle|\,
    S_1,\ldots,S_n
  \right]
  =
  \sum_{S\in\mathcal O_n}
  \E_P
  \left[
    \overline\varepsilon(S)^2
    \,\middle|\,
    S_1,\ldots,S_n
  \right].
\]
Consequently,
\begin{align}
  &\E_{f,P}\!\left[
    \left\|
      \widehat f^{\,\mathrm{comp}}
      -
      \widehat f^{\,\mathrm{comp},0}
    \right\|_2^2
    \,\middle|\,
    S_1,\ldots,S_n
  \right]
  \nonumber\\
  &\qquad\leqslant
  \frac{\sigma^2}{N}
  \left(1+\frac1{M_n}\right)
  \sum_{S\in\mathcal O_n}\frac1{C_n(S)}
  \leqslant
  \frac{2\sigma^2}{N}
  \sum_{S\in\mathcal O_n}\frac1{C_n(S)}.
  \label{supp:eq:completion-noise}
\end{align}
If \(M_n=0\), every subset is observed and
\(\widehat f^{\,\mathrm{comp},0}=f\).  Moreover,
\[
  \overline Y(S)
  =
  f(S)+\overline\varepsilon(S),
  \qquad S\in\Sdk.
\]
Since \(f\) is centered,
\[
  \frac1N\sum_{T\in\Sdk}f(T)=0,
\]
and hence
\begin{align*}
  \widehat f^{\,\mathrm{comp}}(S)
  -
  \widehat f^{\,\mathrm{comp},0}(S)
  &=
  \overline Y(S)
  -
  \frac1N\sum_{T\in\Sdk}\overline Y(T)
  -
  f(S)\\
  &=
  \overline\varepsilon(S)
  -
  \frac1N\sum_{T\in\Sdk}\overline\varepsilon(T).
\end{align*}
Subtracting the uniform mean cannot increase the average squared
norm.  Taking conditional expectation therefore gives
\[
  \E_{f,P}\!\left[
    \left\|
      \widehat f^{\,\mathrm{comp}}
      -
      \widehat f^{\,\mathrm{comp},0}
    \right\|_2^2
    \,\middle|\,
    S_1,\ldots,S_n
  \right]
  \leqslant
  \frac{\sigma^2}{N}
  \sum_{S\in\Sdk}\frac1{C_n(S)}.
\]
Since \(\mathcal O_n=\Sdk\) when \(M_n=0\), the preceding bound is
smaller than the right-hand side of
\eqref{supp:eq:completion-noise}.  Thus,
\eqref{supp:eq:completion-noise} holds in both cases.

The decomposition
\[
  \widehat f^{\,\mathrm{comp}}-f
  =
  \bigl(
    \widehat f^{\,\mathrm{comp},0}-f
  \bigr)
  +
  \bigl(
    \widehat f^{\,\mathrm{comp}}
    -
    \widehat f^{\,\mathrm{comp},0}
  \bigr)
\]
has conditionally vanishing cross term, because the first component
is fixed by the design and the second has conditional mean zero.
Let \(C\sim\operatorname{Bin}(n,1/N)\), and define
\(r(0)=0\) and \(r(c)=1/c\) for every integer \(c\geqslant1\).
Then
\[
  \eta_{n,N}=\E[r(C)].
\]
For each fixed \(S\in\Sdk\), the count \(C_n(S)\) has the same
distribution as \(C\).  Moreover, since \(r(0)=0\),
\[
  \sum_{S\in\mathcal O_n}\frac1{C_n(S)}
  =
  \sum_{S\in\Sdk}r(C_n(S)).
\]
Therefore,
\begin{align*}
  \frac1N
  \E_{\nu_{d,k}^{\otimes n}}
  \left[
    \sum_{S\in\mathcal O_n}\frac1{C_n(S)}
  \right]
  &=
  \frac1N
  \sum_{S\in\Sdk}
  \E_{\nu_{d,k}^{\otimes n}}
  [r(C_n(S))]\\
  &=
  \E[r(C)]
  =
  \eta_{n,N}.
\end{align*}
Combining this identity with
\eqref{supp:eq:completion-signal} and
\eqref{supp:eq:completion-noise} gives
\[
  \E_{f,P}
  \left[
    \|\widehat f^{\,\mathrm{comp}}-f\|_2^2
  \right]
  \leqslant
  \rho_{m,n}\|f\|_2^2
  +
  2\sigma^2\eta_{n,N}.
\]
Since \(\|f\|_2^2\leqslant B\), taking the stated suprema proves
\eqref{eq:centered-completion-risk}.

It remains to bound \(\eta_{n,N}\).  Since
\[
  0\leqslant r(c)\leqslant1,
  \qquad c\geqslant0,
\]
we have \(\eta_{n,N}\leqslant1\).  Moreover,
\[
  r(c)\leqslant\frac2{c+1},
  \qquad c\geqslant0,
\]
and therefore
\[
  \eta_{n,N}
  \leqslant
  2\E\left[\frac1{C+1}\right].
\]
The identity
\[
  \frac1{c+1}\binom nc
  =
  \frac1{n+1}\binom{n+1}{c+1}
\]
gives
\begin{align*}
  \E\left[\frac1{C+1}\right]
  &=
  \sum_{c=0}^n
  \frac1{c+1}\binom nc
  \left(\frac1N\right)^c
  \left(1-\frac1N\right)^{n-c}\\
  &=
  \frac{N}{n+1}
  \left\{
    1-\left(1-\frac1N\right)^{n+1}
  \right\}\\
  &\leqslant
  \frac{N}{n+1}.
\end{align*}
Combining the two bounds proves
\eqref{eq:inverse-count-bound}.

\subsection{Proof of Theorem~\ref{thm:rank-aware-minimax}}

For brevity, write
\[
  \mathfrak R
  =
  \mathfrak R_{n,d,k}(s,B,\sigma^2),
  \qquad
  \mathfrak D
  =
  \mathfrak D_{n,d,k}(s,B),
  \qquad
  a
  =
  \frac{B}{\gamma_{m,d}^{\,s}},
  \qquad
  \Gamma
  =
  \gamma_{m,d}^{\,s}.
\]
Since \(m\geqslant1\), the monotonicity of the spectral weights and
\(\gamma_{1,d}=1\) imply
\(
  \Gamma\geqslant1.
\)

Theorem~\ref{thm:johnson-minimax} and
Proposition~\ref{prop:rank-deficiency} give
\[
  R_n^\star(\mathcal E^s_{d,k}(B))
  \geqslant
  c_1\mathfrak R,
  \qquad
  R_n^\star(\mathcal E^s_{d,k}(B))
  \geqslant
  \mathfrak D
\]
for a universal constant \(c_1>0\).  Therefore,
\[
\begin{aligned}
  R_n^\star(\mathcal E^s_{d,k}(B))
  &\geqslant
  \max\{c_1\mathfrak R,\mathfrak D\}\\
  &\geqslant
  \frac{\min(c_1,1)}2
  (\mathfrak R+\mathfrak D),
\end{aligned}
\]
which proves the lower inequality in
\eqref{eq:rank-aware-minimax} with a universal constant.

For the upper bound, Theorem~\ref{thm:johnson-minimax}, together with
\(B=\Gamma a\), gives
\begin{equation}\label{supp:eq:rank-aware-projection-upper}
  R_n^\star(\mathcal E^s_{d,k}(B))
  \leqslant
  2\left(
    1+\frac{\Gamma a}{\sigma^2}
  \right)\mathfrak R.
\end{equation}
On the other hand, Proposition~\ref{prop:centered-completion} gives
\[
  R_n^\star(\mathcal E^s_{d,k}(B))
  \leqslant
  B\rho_{m,n}
  +
  2\sigma^2\eta_{n,N}.
\]
The term \(D=m\) in the definition of \(\mathfrak D\) satisfies
\[
  \mathfrak D
  \geqslant
  \frac{B}{\Gamma}\rho_{m,n}
  =
  a\rho_{m,n}.
\]
Consequently,
\[
  B\rho_{m,n}
  =
  \Gamma a\rho_{m,n}
  \leqslant
  \Gamma\mathfrak D,
\]
and hence
\begin{equation}\label{supp:eq:rank-aware-completion-upper}
  R_n^\star(\mathcal E^s_{d,k}(B))
  \leqslant
  \Gamma\mathfrak D
  +
  2\sigma^2\eta_{n,N}.
\end{equation}

Put
\[
  v_m
  =
  \frac{\sigma^2(N-1)}n.
\]
This is precisely the quantity \(v_D(n,\sigma^2)\) at \(D=m\),
because \(p_m=N-1\). The bounds in \eqref{eq:inverse-count-bound}, together with
\(N\geqslant2\), imply
\begin{equation}\label{supp:eq:eta-vm}
  \sigma^2\eta_{n,N}
  \leqslant
  4\min\{\sigma^2,v_m\}.
\end{equation}
Indeed, if \(n\leqslant N-1\), then
\(v_m\geqslant\sigma^2\), and
\(\eta_{n,N}\leqslant1\) gives
\[
  \sigma^2\eta_{n,N}
  \leqslant
  \sigma^2
  =
  \min\{\sigma^2,v_m\}.
\]
If \(n\geqslant N\), then \(v_m<\sigma^2\), and
\[
  \eta_{n,N}
  \leqslant
  \frac{2N}{n+1}
  \leqslant
  \frac{4(N-1)}n,
\]
so
\[
  \sigma^2\eta_{n,N}
  \leqslant
  4v_m
  =
  4\min\{\sigma^2,v_m\}.
\]

We now distinguish two cases.  If \(a\leqslant\sigma^2\), then
\[
  1+\frac{\Gamma a}{\sigma^2}
  \leqslant
  1+\Gamma
  \leqslant
  2\Gamma.
\]
Equation~\eqref{supp:eq:rank-aware-projection-upper} therefore gives
\[
  R_n^\star(\mathcal E^s_{d,k}(B))
  \leqslant
  4\Gamma\mathfrak R
  \leqslant
  4\Gamma(\mathfrak R+\mathfrak D).
\]

Suppose instead that \(a>\sigma^2\).  The term \(D=m\) in the maximum
defining \(\mathfrak R\) gives
\[
  \mathfrak R
  \geqslant
  \min\{a,v_m\}.
\]
Since \(a>\sigma^2\),
\[
  \min\{\sigma^2,v_m\}
  \leqslant
  \min\{a,v_m\}
  \leqslant
  \mathfrak R.
\]
Combining
\eqref{supp:eq:rank-aware-completion-upper} and
\eqref{supp:eq:eta-vm} yields
\[
  R_n^\star(\mathcal E^s_{d,k}(B))
  \leqslant
  \Gamma\mathfrak D
  +
  8\mathfrak R
  \leqslant
  8\Gamma(\mathfrak R+\mathfrak D),
\]
where the last inequality uses \(\Gamma\geqslant1\).

The two cases prove the upper inequality in
\eqref{eq:rank-aware-minimax}, with the universal constant \(C=8\).

\end{document}